\DeclareSymbolFont{stmry}{U}{stmry}{m}{n}
\DeclareMathDelimiter\llbracket{\mathopen}{stmry}{"4A}{stmry}{"71}
\DeclareMathDelimiter\rrbracket{\mathclose}{stmry}{"4B}{stmry}{"79}
\theoremstyle{plain}
 \newtheorem{thm}{Theorem}[section]
 \newtheorem{lem}[thm]{Lemma}
 \newtheorem{cor}[thm]{Corollary}
 \newtheorem{prop}[thm]{Proposition}
 \newtheorem{claim}[thm]{Claim}
\theoremstyle{definition}
  \newtheorem{defn}[thm]{Definition}
  \newtheorem{notation}[thm]{Notation}
\theoremstyle{remark}
  \newtheorem{rem}[thm]{Remark}
  \newtheorem{ex}[thm]{Example}
\newcommand{\R}{\mathbb{R}}
\newcommand{\N}{\mathbb{N}}
\newcommand{\Z}{\mathbb{Z}}
\newcommand{\calr}{\mathcal{R}}
\newcommand{\calg}{\mathcal{G}}
\newcommand{\calh}{\mathcal{H}}
\newcommand{\cals}{\mathcal{S}}
\newcommand{\calq}{\mathcal{Q}}
\newcommand{\cale}{\mathcal{E}}
\newcommand{\caln}{\mathcal{N}}
\newcommand{\calm}{\mathcal{M}}
\newcommand{\bs}{\mathrm{BS}}
\newcommand{\ve}{\varepsilon}
\newcommand{\aut}{\mathrm{Aut}}
\newcommand{\End}{\mathrm{End}}
\newcommand{\Q}{\mathbb{Q}}
\newcommand{\ci}[2]{\cite[#1]{#2}}
\renewcommand{\c}{\curvearrowright}
\begin{document}

\title{On treeings arising from HNN extensions}
\author{Yoshikata Kida}
\address{Graduate School of Mathematical Sciences, the University of Tokyo, Komaba, Tokyo 153-8914, Japan}
\email{kida@ms.u-tokyo.ac.jp}
%start: August 25, 2021
\date{September 14, 2023}
\thanks{The author was supported by JSPS Grant-in-Aid for Scientific Research, 17K05268.}
%\subjclass[2010]{37A20}
%\keywords{Inner amenable groups, discrete measured groupoids, central sequences}

\begin{abstract}
For certain HNN extensions including Baumslag-Solitar groups, a treeing is constructed from their certain probability-measure-preserving actions.
This is a treeing of a quotient groupoid of the translation groupoid associated with their actions.
As its application, for some of those HNN extensions, we show that the kernel of the modular homomorphism is measure equivalent to the direct product of the free group of infinite rank and $\Z$.
\end{abstract}

\maketitle

%%%%%%%%%%%%%%%%%%%%%%%%%%%%%%%%%%%

\section{Introduction}\label{sec-intro}

Treeings are, so to speak, free generating systems for countable measured equivalence relations, introduced by Adams \cite{adams1}, \cite{adams2}.
One of the most remarkable results on treeings is Gaboriau's theorem saying that treeings realize the cost of an equivalence relation \cite{gaboriau}.
As its consequence, it follows that free groups of different rank cannot admit free p.m.p.\ (probability-measure-preserving) actions which are orbit equivalent.

In this paper, we construct a treeing of certain discrete measured groupoids arising as follows:
%Those groupoids arise as follows:
Let $G$ be the HNN extension
\[G=\langle \, E, \, t\mid \forall a\in E_-\ \ tat^{-1}=\tau(a)\, \rangle,\]
where $E$ is a countable group and $\tau \colon E_-\to E_+$ is an isomorphism between finite-index normal subgroups $E_-$, $E_+$ of $E$.
Given a p.m.p.\ action $G\c (X, \mu)$ on a standard probability space satisfying a certain periodicity condition for its restriction to $E$, we have the translation groupoid $(X\rtimes G, \mu)$, and remarkably the subgroupoid $X\rtimes E$ is normal in it in the sense of Feldman-Sutherland-Zimmer \cite{fsz}, while $E$ is not normal in $G$ except for the trivial case.
We then obtain the quotient groupoid $(\calq, \zeta)$ of $(X\rtimes G, \mu)$ by $X\rtimes E$.
The first aim of this paper is to construct a treeing of this groupoid $(\calq, \zeta)$ and to study its basic properties.
Almost every component of the treeing we obtain is identified with the Bass-Serre tree associated with the HNN decomposition of $G$.

We are mainly interested in the case when both $p\coloneqq [E: E_-]$ and $q\coloneqq [E: E_+]$ are more than $1$ and $p\neq q$.
We should note that in this case $(\calq, \zeta)$ is not p.m.p.\ (or rather, is of type $\mathrm{III}$).
Originally this groupoid $(\calq, \zeta)$ appears in the author's study of Baumslag-Solitar groups \cite{kida-bs} and has been curious to the author.
We hope the investigation of this paper will lead to understanding this mysterious groupoid.

In Gaboriau's paper \cite{gaboriau}, several techniques to handle treeings are developed.
One of them is the induction.
This enables us to construct a treeing of the restriction of $\calr$ when a treeing of an equivalence relation $\calr$ is given.
Applying this induction technique to the Maharam extension of $(\calq, \zeta)$, together with Hjorth's result on cost \cite{hjorth}, we show Theorem \ref{thm-main} below.
This is the second aim of this paper.

In this paper we mean by a \textit{p.m.p.}\ action a measure-preserving action on a standard probability space.
Let us say that two countable groups are \textit{orbit equivalent} if some free p.m.p.\ action of one of the two groups is orbit equivalent to some free p.m.p.\ action of the other group.
We refer to e.g., \cite{furman} for basic terminology and results on orbit equivalence.

\begin{thm}\label{thm-main}
Let $G$ be the above HNN extension.
Suppose that $E$ is finitely generated, free abelian, and suppose also $p>1$, $q>1$ and $p\neq q$.
Let $\bm{m}\colon G\to \Q^*_+$ be the modular homomorphism associated to $E$ defined by
\[\bm{m}(g)=\frac{[E: E\cap gEg^{-1}]}{[gEg^{-1}: E\cap gEg^{-1}]}\]
for $g\in G$ (see Subsection \ref{subsec-rn}). %$\bm{m}(g)=[E: E\cap {}^g\!E]/[{}^g\!E: E\cap {}^g\!E]$
%for $g\in G$, where ${}^g\!E \coloneqq gEg^{-1}$ (see Subsection \ref{subsec-rn}).
Then $\ker \bm{m}$ is orbit equivalent to $F_\infty \times \Z$, where $F_\infty$ is the free group of countably infinite rank.
\end{thm}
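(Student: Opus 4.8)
The plan is to identify $\ker\bm m$ with the kernel $\ker n$ of the $t$-exponent homomorphism, to realize the orbit relation of a free action of $\ker n$ as an extension of the treeable neutral part of $(\calq,\zeta)$ by the hyperfinite groupoid $X\rtimes E$, and to compute the cost of the treeable part through the Maharam extension.

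\textbf{Step 1 (reduction to $\ker n$).} First I would evaluate $\bm m$ on generators. For $g\in E$ we have $gEg^{-1}=E$, so $\bm m|_E\equiv 1$; and since $tE_-t^{-1}=E_+$ gives $E\cap tEt^{-1}=E_+$ by Britton's lemma, I get $\bm m(t)=[E:E_+]/[tEt^{-1}:E_+]=q/p$. As $p\neq q$, the element $q/p$ has infinite order in $\Q^*_+$, so $\bm m$ factors as $G\xrightarrow{\,n\,}\Z\hookrightarrow\Q^*_+$ where $n\colon G\to\Z$ has $n|_E\equiv 0$ and $n(t)=1$; hence $\ker\bm m=\ker n$. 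I would then fix a free p.m.p.\ action $G\c(X,\mu)$ with the required periodicity, so that $\ker n\c(X,\mu)$ is free and $X\rtimes E$ is a normal (in the sense of \cite{fsz}) hyperfinite subgroupoid of $X\rtimes\ker n$; as the orbit relation of the free $\Z^d$-action of $E$, it is orbit equivalent to $\Z$ by the Dye--Ornstein--Weiss theorem.

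\textbf{Step 2 (Maharam extension and its treeing).} The cocycle $n$ descends to $\delta\colon\calq\to\Z$, a scalar multiple of the logarithm of the Radon--Nikodym cocycle; since $p\neq q$ this is nontrivial and $(\calq,\zeta)$ is genuinely of type $\mathrm{III}$. I would form the Maharam extension $\tilde\calq=\calq\ltimes_\delta\Z$ on $\calq^{(0)}\times\Z$, of type $\mathrm{II}_\infty$. Because each $\calq$-class is a tree under the treeing already constructed and $\delta$ is a cocycle, the $\tilde\calq$-class of $(v_0,0)$ projects bijectively onto that tree (the $\Z$-coordinate of a vertex being the $\delta$-value along the unique path from $v_0$); so the treeing lifts and $\tilde\calq$ is treeable, with almost every component isomorphic to the Bass--Serre tree $T$.

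\textbf{Step 3 (cost and Hjorth).} The quotient $(X\rtimes\ker n)/(X\rtimes E)$ is the ergodic neutral subgroupoid $\ker\delta$, and $\ker\delta=\tilde\calq|_{\calq^{(0)}\times\{0\}}$. Since $\delta$ is onto $\Z$ on each component, $\calq^{(0)}\times\{0\}$ is a complete section of finite mass, so by Gaboriau's induction \cite{gaboriau} $\ker\delta$ is treeable. I expect its cost to be infinite: the lifted treeing has positive cost density, since each vertex of $T$ has valence $p+q\geq 4$ because $p,q>1$, while the ambient component has infinite invariant mass (type $\mathrm{II}_\infty$, because $p\neq q$); restricting to the unit-mass section then forces cost $\infty$. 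By Hjorth's theorem \cite{hjorth}, an ergodic treeable relation of cost $\infty$ is orbit equivalent to a free action of $F_\infty$, so $\ker\delta$ is orbit equivalent to $F_\infty$.

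\textbf{Step 4 (assembly; the main obstacle).} It remains to combine the treeable quotient with the hyperfinite kernel into a direct product. Concretely I would show that the extension
\[1\longrightarrow X\rtimes E\longrightarrow X\rtimes\ker n\longrightarrow\ker\delta\longrightarrow 1\]
of discrete measured groupoids splits as a direct product of equivalence relations, by lifting a treeing of $\ker\delta$ to a treeing of $X\rtimes\ker n$ transverse to $X\rtimes E$ and absorbing the resulting amenable cocycle via uniqueness of the hyperfinite ergodic relation. This splitting is the crux of the argument: the direct-product structure is not visible at the level of the group $\ker n$, which carries no central copy of $\Z$, so the factor $\Z$ is genuinely measure-theoretic and must be produced from the normal hyperfinite subrelation $X\rtimes E$ together with the treeability of the quotient. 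Granting the splitting, Steps 1--3 yield that $X\rtimes\ker n$ is orbit equivalent to $F_\infty\times\Z$, which is the assertion; I note as a consistency check that crossing the cost-$\infty$ relation $\ker\delta$ with the infinite amenable $X\rtimes E$ collapses the cost to $1$, matching the cost of $F_\infty\times\Z$.
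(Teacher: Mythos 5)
Your Steps 1--3 essentially reproduce what the paper does in Section \ref{sec-HNN}: the identification of $\ker\bm{m}$ with the kernel of the $t$-exponent map, the treeing $\Phi=\Phi_+\sqcup\Phi_+^{-1}$ of $(\calq,\zeta)$, its lift $\tilde{\Phi}$ to the Maharam extension, Gaboriau's induction down to the unit-mass section $W_0$, and the divergence of the induced cost $(q-1)\sum_{n\ge1}(q/p)^{-n}+(p-1)\sum_{n\ge1}(q/p)^{n}$ when $p,q>1$ and $p\neq q$ are exactly Theorem \ref{thm-cost-infty}. One caveat in Step 3: $\caln/\cale$ is a groupoid with nontrivial isotropy, not an equivalence relation, so Hjorth's lemma cannot be applied to it directly; the paper first passes to a Bernoulli extension $\Sigma\rtimes(\caln/\cale)$ in the sense of \cite{btd} to make the groupoid principal while preserving ergodicity and the treeing (proof of Lemma \ref{lem-oe-fne}), and only then invokes \cite{hjorth}.

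The genuine gap is Step 4, which you rightly call the crux but do not prove. ``Absorbing the resulting amenable cocycle via uniqueness of the hyperfinite ergodic relation'' is not an argument: uniqueness of the hyperfinite ergodic $\mathrm{II}_1$ relation identifies $X\rtimes E$ with a $\Z$-relation but gives no control over how that identification interacts with the quotient map onto $\ker\Delta$, and a lifted treeing gives at best a set-theoretic section, not a groupoid homomorphism $\ker\Delta\to X\rtimes\ker\bm{m}$ whose image \emph{commutes} with $X\rtimes E$ --- which is what a direct-product decomposition requires. In general a treeable-by-hyperfinite extension carries a cocycle that cannot be untwisted, so some special structure must be exploited, and this is precisely where the hypothesis that $E$ is free abelian enters. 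The paper's device (Section \ref{sec-splitting}) is to change the action rather than untwist an arbitrary one: since $E\cong\Z^\nu$, the isomorphism $\tau$ extends to an element of $\mathit{GL}_\nu(\Q)$ and $N=\ker\bm{m}$ surjects onto the \emph{abelian} group $H=\bigvee_{n\in\Z}\tau^n(E)\le\Q^\nu$ with $\varphi|_E$ injective, so $E$ becomes central in $H$ and $H/E$ is amenable. For an action of $N$ factoring through $H$ in which $E$ acts trivially, Lemma \ref{lem-split-hyp} splits $X\rtimes H\simeq\calr(H/E\c X)\times E$ (hyperfiniteness of $\calr(H/E\c X)$ produces the section; centrality of $E$ in $H$ is what makes the section land in the commutant of $E$), and Lemma \ref{lem-split-ne} then tensors with an ergodic free p.m.p.\ action $E\c(Y,\nu)$ to obtain a free $N$-action with $(X\times Y)\rtimes N\simeq(\caln/\cale)\times(Y\rtimes E)$. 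This mechanism is absent from your outline, and without it (or a substitute making genuine use of the free-abelian hypothesis beyond hyperfiniteness of $X\rtimes E$) the claimed direct-product structure, and hence the conclusion, is unsupported.
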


The group $\ker \bm{m}$ in this theorem is written as the iterated amalgamated free product of the bi-infinite chain of copies of $E$:
For each $n\in \Z$, prepare a copy $E_n=E$.
Take the free product of all $E_n$ with $n\in \Z$, and for every $n\in \Z$, identify each $a\in E_-<E_n$ with $\tau(a)\in E_+<E_{n-1}$.
Then the resulting group is isomorphic to $\ker \bm{m}$ (via identifying $E_n$ with $t^nEt^{-n}$).

We expect that the same conclusion holds for other kinds of iterated amalgamated free products (e.g., the kernel of the modular homomorphism of a generalized Baumslag-Solitar group).
This generalization is not straightforward because our construction of the treeing of the groupoid $(\calq, \zeta)$ depends on transitivity of the action of $G$ on the vertex set of the Bass-Serre tree.
However we expect that our construction of the treeing can be extended and that the groups dealt with in this paper are the simplest ones that our construction is available for.
We hope this extension will be carried out in future. %our present work will be extended in future. %carried out in future.

Finally let us mention the author's work \cite{kida-bs} on orbit equivalence between Baumslag-Solitar groups $\bs(p, q)=\langle \, a,\, t\mid ta^pt^{-1}=a^q\, \rangle$ for nonzero integers $p$, $q$.
Among others, it is shown that if two Baumslag-Solitar groups $\bs(p, q)$ and $\bs(p', q')$ with $2\leq |p|< |q|$ and $2\leq |p'|< |q'|$ are orbit equivalent, then the kernels of their modular homomorphisms are orbit equivalent \cite[Corollary 7.5]{kida-bs}.
This was our motivation to study the group $\ker \bm{m}$ in Theorem \ref{thm-main}.
However it turns out from Theorem \ref{thm-main} that this result in \cite{kida-bs} does not contribute to the orbit equivalence classification of Baumslag-Solitar groups.
We refer the reader to \cite{hh}, \cite{hr}, \cite{kida-stab}, \cite{kida-mcmr}, \cite{mee} and \cite{td} for other works on orbit equivalence classes of p.m.p.\ actions of Baumslag-Solitar groups and related groups.

The paper is organized as follows.
In Section \ref{sec-normal}, we start with preliminaries on discrete measured groupoids, and discuss normal subgroupoids and quotient groupoids.
We collect criteria for subgroupoids to be normal and for the quotient groupoid to be p.m.p.
In Section \ref{sec-treeing}, we introduce a treeing of a discrete Borel groupoid and its induction.
In Section \ref{sec-HNN}, for the above HNN extension $G$ and a certain p.m.p.\ action $G\c (X, \mu)$, we construct a treeing of the quotient groupoid $(\calq, \zeta)$ of $(X\rtimes G, \mu)$ by $X\rtimes E$, and apply the induction to the Maharam extension of $(\calq, \zeta)$.
In Section \ref{sec-splitting}, after proving a general criterion for splitting of groupoid-extensions, we prove Theorem \ref{thm-main}.

%%%%%%%%%%%%%%%%%%%%%%%%%%%%%%%%%%%

\section{Normal subgroupoids and quotient groupoids}\label{sec-normal}

Throughout the paper, all measurable spaces are standard Borel spaces.
We use the adjectives ``Borel'' and ``measurable'' interchangeably. %, with both meaning ``Borel measurable''.
In the context of measure spaces, unless otherwise mentioned, all relations among Borel sets and maps are understood to hold up to null sets.

\subsection{Preliminaries on groupoids}\label{subsec-pre}

Let $\calg$ be a groupoid.
Let $\calg^0\subset \calg$ denote the set of units and $r, s\colon \calg \to \calg^0$ denote the range and source maps, respectively.
We employ the notation in \cite[Section 3.1]{btd}.
Given two subsets $A, B\subset \calg$, we define their product as $AB=\{ \, gh \mid g\in A,\, h\in B,\, s(g)=r(h)\, \}$.
This product is associative and hence the product $A_1\cdots A_n$ of finitely many subsets $A_1,\ldots, A_n\subset \calg$ makes sense.
If $A, B\subset \calg^0$, then $A\calg =r^{-1}(A)$, $\calg B=s^{-1}(B)$ and $A\calg B=r^{-1}(A)\cap s^{-1}(B)$.
For $g\in \calg$ and $A\subset \calg$, we write $gA$ for $\{ g\}A$ and write $Ag$ for $A\{ g\}$.
Therefore for $x\in \calg^0$, we have $x\calg =r^{-1}(x)$ and $\calg x=s^{-1}(x)$.
For $A\subset \calg$, we write $A^{-1}=\{ \, g^{-1}\mid g\in A\, \}$.

Let $A\subset \calg^0$ be a subset.
We write $\calg|_A= A\calg A$, which is a groupoid whose unit space is $A$ if $A$ is nonempty.
We write $[A]_\calg$ for the saturation of $A$, i.e., $[A]_\calg = s(A\calg)$.
If $[A]_\calg =A$, then $A$ is called \textit{$\calg$-invariant}.
We mean by an \textit{$r$-section} (resp.\ an \textit{$s$-section}) of $\calg$  a subset $\phi \subset \calg$ such that the range (resp.\ source) map is injective on $\phi$.
We mean by a \textit{bisection} of $\calg$ a subset $\phi \subset \calg$ which is an $r$-section of $\calg$ and is also an $s$-section of $\calg$.
If $\phi$ and $\psi$ are $r$-sections of $\calg$, then $\phi \psi$ is also an $r$-section of $\calg$.
For an $r$-section $\phi$ of $\calg$ and $x\in r(\phi)$, the set $x\phi$ consists of a single element, and by abuse of notation we denote this element by $x\phi$.
Similarly for an $s$-section $\phi$ of $\calg$ and $y\in s(\phi)$, we denote the single element of $\phi y$ by the same symbol $\phi y$.
For an $r$-section $\phi$ of $\calg$, we define the map $\bar{\phi}\colon r(\phi)\to s(\phi)$ by $\bar{\phi}(x)=s(x\phi)$ for $x\in r(\phi)$.

We mean by a \textit{discrete Borel groupoid} a groupoid $\calg$ such that $x\calg$ is countable for every $x\in \calg^0$, $\calg$ is a standard Borel space, $\calg^0$ is a Borel subset of $\calg$, and all of the maps $r$, $s$ and the multiplication and inverse maps of $\calg$ are Borel.

For $x\in \calg^0$, let $c_x^r$ and $c_x^s$ be the counting measures on $x\calg$ and $\calg x$, respectively.
We mean by a \textit{discrete measured groupoid} a pair $(\calg, \mu)$ of a discrete Borel groupoid $\calg$ and a probability measure $\mu$ on $\calg^0$ such that the two measures on $\calg$,
\begin{equation}\label{eq-mu}
\mu_\calg^r\coloneqq \int_{\calg^0}c_x^r\, d\mu(x)\quad \text{and} \quad \mu_\calg^s\coloneqq \int_{\calg^0}c_x^s\, d\mu(x),
\end{equation}
are equivalent.
Note that $\mu_\calg^r$ and $\mu_\calg^s$ are equivalent if and only if for every Borel bisection $\phi$ of $\calg$, the two measures on $s(\phi)$, $\bar{\phi}_*(\mu|_{r(\phi)})$ and $\mu|_{s(\phi)}$, are equivalent.
In this case, further for every Borel $r$-section $\phi$ of $\calg$, the two measures on $s(\phi)$, $\bar{\phi}_*(\mu|_{r(\phi)})$ and $\mu|_{s(\phi)}$, are equivalent.
Given a discrete measured groupoid $(\calg, \mu)$, we endow $\calg$ with the measure $\mu_\calg^r$ unless otherwise mentioned.

Let $\mu$ be a $\sigma$-finite measure on $\calg^0$ and define the two measures $\mu_\calg^r$, $\mu_\calg^s$ by the same formula as (\ref{eq-mu}).
The equation $\mu_\calg^r=\mu_\calg^s$ holds if and only if for every Borel bisection $\phi$, the equation $\bar{\phi}_*(\mu|_{r(\phi)})=\mu|_{s(\phi)}$ holds.
In this case, we say that $\calg$ \textit{preserves} $\mu$.
If moreover $\mu$ is a probability measure, then we call the pair $(\calg, \mu)$ a \textit{discrete p.m.p.\ groupoid}.

Let $(\calg, \mu)$ be a discrete measured groupoid.
We denote by $\llbracket (\calg, \mu)\rrbracket$ the set of Borel bisections of $\calg$, where two of them are identified if they coincide up to null sets.
We write $\llbracket \calg \rrbracket$ for $\llbracket (\calg, \mu)\rrbracket$ if $\mu$ is understood from the context.
We denote by $[ (\calg, \mu)]$ (or $[\calg]$ simply) the set of all $\phi \in \llbracket (\calg, \mu)\rrbracket$ such that $r(\phi)=s(\phi)=\calg^0$.

Let $\cals <\calg$ be a Borel subgroupoid (where we always assume a Borel subgroupoid of $\calg$ to have the same unit space as that of $\calg$).
For $x\in \calg^0$, we define an equivalence relation $\sim_\cals$ on $x\calg$ by saying that for $g, h\in x\calg$, $g\sim_\cals h$ if and only if $h^{-1}g\in \cals$.
The function assigning to $x\in \calg^0$ the cardinality of the $\sim_\cals$-equivalence classes in $x\calg$ is Borel and $\calg$-invariant \cite[Section 1]{fsz} (or \cite[Lemma 3.5 (i)]{kida-bs}).
This function is called the \text{index function} for the pair $\cals <\calg$, and if it is constant almost everywhere, then the constant is called the \textit{index} of $\cals$ in $\calg$.

A countable family $(\phi_n)$ of Borel $r$-sections $\phi_n$ of $\calg$ with $r(\phi_n)=\calg^0$ is called a \textit{family of choice functions} for the pair $\cals <\calg$ if for almost every $x\in \calg^0$, for every $g\in x\calg$, there exists a unique $n$ such that $g\sim_\cals x\phi_n$.
We define $\End_\calg(\cals)$ as the set of Borel $r$-sections $\phi$ of $\calg$ such that $r(\phi)=\calg^0$ and the associated conjugation
\[V_\phi \colon \calg \to \calg,\quad V_\phi(\gamma)=\phi^{-1}\gamma \phi\]
satisfies $V_\phi(\cals)\subset \cals$.
We call $\cals$ \textit{normal} in $\calg$ (or in $(\calg, \mu)$ when $\mu$ should be made explicit) if there exists a family of choice functions for $\cals <\calg$ which consist of elements in $\End_\calg(\cals)$.
In this case, we write $\cals \vartriangleleft \calg$ (\cite[Section 2]{fsz}).

\begin{ex}
Let $(X, \mu)$ be a standard probability space and let $\calr$ be a countable Borel equivalence relation on $X$ which preserves the class of $\mu$.
Then $(\calr, \mu)$ is a discrete measured groupoid with respect to the range map $r(x, y)= x$, the source map $s(x, y)= y$, the product $(x, y)(y, z)=(x, z)$, and the inverse $(x, y)^{-1}=(y, x)$.
Each Borel $r$-section $\phi$ of $\calr$ may be identified with the map $\bar{\phi} \colon r(\phi)\to s(\phi)$.

Let $\cals < \calr$ be a Borel subequivalence relation.
Then for a Borel $r$-section $\phi$ of $\calr$ with $r(\phi)=X$, $\phi$ belongs to $\End_\calr(\cals)$ if and only if for every $(x, y) \in \cals$, we have $\phi^{-1}(x, y)\phi \in \cals$, i.e., $(\bar{\phi}(x), \bar{\phi}(y))\in \cals$.
\end{ex}

\begin{ex}
Let $G\c X$ be a Borel action of a countable group on a standard Borel space.
Then the set $X\times G$ admits the structure of a discrete Borel groupoid with the range map $(x, g)\mapsto x$, the source map $(x, g)\mapsto g^{-1}x$, the product $(x, g)(g^{-1}x, h)=(x, gh)$, and the inverse $(x, g)^{-1}=(g^{-1}x, g^{-1})$.
This groupoid is denoted by $X\rtimes G$.
If the action $G\c X$ preserves the class of a probability measure $\mu$ on $X$, then $(X\rtimes G, \mu)$ is a discrete measured groupoid.

Let $H$ be a normal subgroup of $G$.
Then $\calh \coloneqq X\rtimes H$ is normal in $\calg \coloneqq X\rtimes G$.
Indeed if we choose $s_n\in G$ such that $G=\bigsqcup_n s_nH$ and define a bisection $\phi_n$ of $\calg$ by $\phi_n=X\times \{ s_n\}$, then $\phi_n\in \End_{\calg}(\calh)$ and $(\phi_n)$ is a family of choice functions for $\calh <\calg$.
\end{ex}

\begin{lem}\label{lem-s-inv}
Let $(\calg, \mu)$ be a discrete measured groupoid and let $\cals  <\calg$ be a Borel subgroupoid.
Then the following assertions hold:
\begin{enumerate}
\item[(i)] For all $\phi, \psi \in \End_\calg(\cals)$, the set of all $x\in \calg^0$ with $x\phi \sim_\cals x\psi$ is $\cals$-invariant. 
\item[(ii)] If $\phi, \psi \in \End_\calg(\cals)$, then $\phi \psi \in \End_\calg(\cals)$.
\end{enumerate}
\end{lem}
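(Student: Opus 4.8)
The plan is to dispose of (ii) first, since it is essentially formal, and then to reduce (i) to a single algebraic identity in $\calg$. Throughout I read $\phi^{-1}\gamma\phi$ via the single-element conventions for $r$-sections: for an $r$-section $\phi$ with $r(\phi)=\calg^0$ and $\gamma$ with $r(\gamma)=x$, $s(\gamma)=y$, the conjugation is the well-defined Borel map
\[
V_\phi(\gamma)=(x\phi)^{-1}\gamma(y\phi),
\]
which carries $x\calg y$ into $\bar\phi(x)\,\calg\,\bar\phi(y)$.

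For (ii), the key observation is that conjugation is contravariantly functorial, i.e.\ $V_{\phi\psi}=V_\psi\circ V_\phi$. To verify this I would expand $x(\phi\psi)=(x\phi)\bigl(\bar\phi(x)\psi\bigr)$, take inverses, and regroup to obtain $V_{\phi\psi}(\gamma)=\bigl(\bar\phi(x)\psi\bigr)^{-1}V_\phi(\gamma)\bigl(\bar\phi(y)\psi\bigr)$; since $V_\phi(\gamma)$ has range $\bar\phi(x)$ and source $\bar\phi(y)$, the right-hand side is exactly $V_\psi\bigl(V_\phi(\gamma)\bigr)$. Granting that $\phi\psi$ is again an $r$-section with $r(\phi\psi)=\calg^0$ (the product of $r$-sections is an $r$-section, as recalled earlier in this subsection), the inclusion $V_{\phi\psi}(\cals)=V_\psi\bigl(V_\phi(\cals)\bigr)\subset V_\psi(\cals)\subset\cals$ is then immediate, so $\phi\psi\in\End_\calg(\cals)$.

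For (i), write $A$ for the set in question. Unwinding the definition of $\sim_\cals$, membership $x\in A$ means precisely $(x\psi)^{-1}(x\phi)\in\cals$; and since $\cals$ is a groupoid with $\cals^0=\calg^0$, saying $A$ is $\cals$-invariant amounts to showing that whenever $x\in A$ and $h\in\cals$ has $r(h)=x$, $s(h)=y$, then $y\in A$, i.e.\ $(y\psi)^{-1}(y\phi)\in\cals$. I would prove this through the identity
\[
(y\psi)^{-1}(y\phi)=V_\psi(h)^{-1}\,\bigl((x\psi)^{-1}(x\phi)\bigr)\,V_\phi(h),
\]
which one checks by cancelling $(x\psi)(x\psi)^{-1}$ and $(x\phi)(x\phi)^{-1}$ and using $h^{-1}h=1_y$; geometrically it is the commuting square with vertices $\bar\psi(x)$, $\bar\phi(x)$, $\bar\psi(y)$, $\bar\phi(y)$ whose sides are $(x\psi)^{-1}(x\phi)$, $V_\psi(h)$, $V_\phi(h)$ and $(y\psi)^{-1}(y\phi)$. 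All three factors on the right lie in $\cals$: the middle one because $x\in A$, and the outer two because $\phi,\psi\in\End_\calg(\cals)$ force $V_\phi(h),V_\psi(h)\in\cals$ (and $\cals$ is closed under inverses). Hence the left-hand side lies in $\cals$, that is $y\in A$.

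The only genuine obstacle is bookkeeping: ensuring that each product of $r$-sections is read as the intended single element and that all ranges and sources match so the cancellations above are legitimate. Once $V_\phi$ and the single-element conventions are pinned down, both parts are short; the crux of (i) is spotting the three-term factorization, and the crux of (ii) is the functoriality $V_{\phi\psi}=V_\psi\circ V_\phi$.
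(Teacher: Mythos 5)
Your proof is correct and follows essentially the same route as the paper: part (ii) is the identity $(\phi\psi)^{-1}\gamma(\phi\psi)=\psi^{-1}(\phi^{-1}\gamma\phi)\psi$, and part (i) rests on the same three-term conjugation identity, which the paper writes in the equivalent form $\phi^{-1}r(\gamma)\psi=(\phi^{-1}\gamma\phi)\,\phi^{-1}s(\gamma)\psi\,(\psi^{-1}\gamma^{-1}\psi)$. The only differences are notational (systematic use of $V_\phi$ and the opposite orientation of the comparison element), and your explicit remark that $r(\phi\psi)=\calg^0$ is a point the paper leaves implicit.
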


\begin{proof}
For assertion (i), pick $\gamma \in \cals$.
Then $\phi^{-1}r(\gamma)\psi=(\phi^{-1}\gamma \phi)\phi^{-1}s(\gamma)\psi(\psi^{-1}\gamma^{-1}\psi)$.
Since $\phi^{-1}\gamma \phi$ and $\psi^{-1}\gamma^{-1}\psi$ belong to $\cals$, assertion (i) follows.

For assertion (ii), pick $\gamma \in \cals$.
Then $(\phi \psi)^{-1}\gamma (\phi \psi)=\psi^{-1}(\phi^{-1}\gamma \phi )\psi$.
It follows from $\phi \in \End_\calg(\cals)$ that $\phi^{-1}\gamma \phi \in \cals$, and follows from $\psi \in \End_\calg(\cals)$ that $\psi^{-1}(\phi^{-1}\gamma \phi )\psi \in \cals$.
Assertion (ii) follows.
\end{proof}

%%%%%%%%%%%%%%%%%%%%%%%%%%%%%%%%%%%

\subsection{Construction of quotient groupoids}

Given a discrete measured groupoid and its normal subgroupoid, we construct the quotient groupoid.
For principal groupoids (i.e., equivalence relations), this is done by Feldman-Sutherland-Zimmer \cite[Theorem 2.2]{fsz}.
Our construction below is a verbatim translation of theirs.
For the reader's convenience, we give its details here.

\begin{thm}\label{thm-quotient}
Let $(\calg, \mu)$ be a discrete measured groupoid and let $\cals \vartriangleleft \calg$ be a normal Borel subgroupoid.
Then there exist a discrete measured groupoid $(\calq, \zeta)$ and a Borel homomorphism $\theta \colon \calg \to \calq$ with $\theta_*\mu =\zeta$ such that
\begin{enumerate}
\item[(1)] $\ker \theta =\cals$,
\item[(2)] $\theta$ is \textit{class-surjective}, i.e., for almost every $x\in \calg^0$, the map $x\calg \to \theta(x)\calq$ defined as the restriction of $\theta$ is surjective, and
\item[(3)] the following universal property holds: 
If $\calq'$ is a discrete Borel groupoid and $\theta'\colon \calg \to \calq'$ is a Borel homomorphism such that $\cals <\ker \theta'$, then there exists a Borel homomorphism $\tau \colon \calq \to \calq'$ such that $\theta' =\tau \circ \theta$.
\end{enumerate}
\end{thm}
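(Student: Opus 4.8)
The plan is to imitate, in the groupoid setting, the double-coset construction of Feldman--Sutherland--Zimmer. Fix once and for all a family of choice functions $(\phi_n)$ for $\cals <\calg$ consisting of elements of $\End_\calg(\cals)$, which exists by the hypothesis $\cals \vartriangleleft \calg$. The guiding principle is that the fibres of any Borel homomorphism $\theta$ with $\ker\theta=\cals$ must be exactly the double cosets $\cals g\cals$: if $\theta(g)=\theta(g')$ then, after correcting the range and source of $g'$ by elements of $\cals$, the product with $g^{-1}$ lies in the isotropy over $r(g)$ and is killed by $\theta$, hence lies in $\cals$; conversely $\theta$ annihilates $\cals$ on both sides. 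I therefore define $\calq$ to be the quotient of $\calg$ by the relation $g\approx g'\Leftrightarrow g'\in\cals g\cals$, let $\theta\colon\calg\to\calq$ be the quotient map, and set $\calq^0\coloneqq\theta(\calg^0)$.

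The range and source maps descend to $\calq$ because $\approx$ restricts on $\calg^0$ to the relation $x\approx x'\Leftrightarrow x'\in[x]_\cals$. For the multiplication I declare $\theta(g)\theta(h)$ to be defined when $\theta(s(g))=\theta(r(h))$; replacing $h$ by $\sigma h$ for a suitable $\sigma\in\cals$ I may assume $s(g)=r(h)$, and I set $\theta(g)\theta(h)\coloneqq\theta(gh)$. The one genuinely substantial point is that this is well defined, and this is exactly where normality is used. Writing each $g$ as $r(g)\phi_n\cdot\sigma$ with $\sigma\in\cals$ and invoking $V_{\phi_n}(\cals)\subset\cals$ together with Lemma \ref{lem-s-inv}, one obtains $g\cals=\cals g$ fibrewise, i.e.\ conjugation by any element of $\calg$ preserves $\cals$; this lets me absorb the ``middle'' factor $\beta\gamma\in\cals$ appearing in $g_2h_2=\alpha g_1(\beta\gamma)h_1\delta$ into the outer $\cals$-factors, so that $\theta(g_2h_2)=\theta(g_1h_1)$ whenever $\theta(g_1)=\theta(g_2)$ and $\theta(h_1)=\theta(h_2)$. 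Associativity, the inverse $\theta(g)^{-1}=\theta(g^{-1})$ (legitimate since $(\cals g\cals)^{-1}=\cals g^{-1}\cals$), and the remaining groupoid axioms are then formal.

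The step I expect to be the main obstacle is to equip $\calq$ with the structure of a \emph{discrete measured} groupoid rather than a mere set-theoretic quotient. Here the family $(\phi_n)$ does double duty: it selects, in a Borel way, a representative of each $\sim_\cals$-class in every fibre $x\calg$, and this is the key device for endowing $\calq$ with a standard Borel structure and for producing Borel sections of $\theta$; the discreteness of $\calq$ then follows from the countability of each $x\calg/\!\sim_\cals$. I then put $\zeta\coloneqq\theta_*\mu$ and must check that $\zeta_\calq^r$ and $\zeta_\calq^s$ are equivalent. This I obtain by transporting Borel bisections of $\calq$ to Borel $r$-sections of $\calg$ through these sections and using the equivalence $\mu_\calg^r\sim\mu_\calg^s$ for $\calg$; organising the range $\cals$-orbits coherently and keeping track of null sets throughout, while verifying that all choices can be made Borel, is the technical heart of the argument.

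Finally I verify (1)--(3). For (1), $\theta(g)$ is a unit iff $g\approx 1_x$ for some $x$, and since $g=g\cdot 1_{s(g)}$ with $g^{-1}\in\cals$ gives $g\approx 1_{s(g)}$ precisely when $g\in\cals$, we get $\ker\theta=\cals$. For (2), any arrow of $\calq$ with range $\theta(x)$ has the form $\theta(h)$ with $r(h)\in[x]_\cals$; choosing $\sigma\in\cals$ with $r(\sigma)=x$ and $s(\sigma)=r(h)$ yields $\theta(\sigma h)=\theta(h)$ with $r(\sigma h)=x$, proving class-surjectivity. For (3), given a discrete Borel groupoid $\calq'$ and a Borel homomorphism $\theta'\colon\calg\to\calq'$ with $\cals<\ker\theta'$, I set $\tau(\theta(g))\coloneqq\theta'(g)$; since $\theta'$ sends elements of $\cals$ to units of $\calq'$, it is constant on double cosets, so $\tau$ is well defined, is Borel because $\theta$ admits Borel sections, is a homomorphism since $\theta$ is a surjective one, and is the unique such map by surjectivity of $\theta$.
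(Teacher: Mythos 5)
There is a genuine gap, and it sits exactly where you flag the ``technical heart'': the double-coset quotient is the wrong object, and no choice of Borel representatives will rescue it. If $\calq=\calg/\!\approx$ with $g\approx g'\Leftrightarrow g'\in\cals g\cals$, then the unit space of $\calq$ is the orbit space $\calg^0/\cals$, which fails to be a standard Borel space whenever the equivalence relation induced by $\cals$ on $\calg^0$ is non-smooth --- i.e.\ in essentially every case of interest. Your family $(\phi_n)$ parametrizes the $\sim_\cals$-classes \emph{within each fibre} $x\calg$ by a countable index set $J$, which gives a Borel structure on something like $\calg^0\times J$, but it does nothing to tame the quotient of the base $\calg^0$ itself. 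The construction the theorem requires (and the one the paper carries out, following Feldman--Sutherland--Zimmer) replaces the orbit space by the \emph{ergodic decomposition} $\theta\colon(\calg^0,\mu)\to(Z,\zeta)$ of $(\cals,\mu)$ and sets $\calq=Z\times J$, with the product $(z,j)(\Phi_j(z),k)=(z,j\ast_z k)$ encoded by the Borel, $\cals$-invariant maps $x\mapsto j\ast_x k$. This is the essential idea your proposal is missing: the entire point of normality via $\End_\calg(\cals)$ is that the structure maps descend to the ergodic decomposition, not merely to the orbit space.

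Consequently your ``guiding principle'' --- that the fibres of any $\theta$ with $\ker\theta=\cals$ must be exactly the double cosets $\cals g\cals$ --- is false for the groupoid the theorem asserts to exist. The condition $\ker\theta=\cals$ only constrains the preimage of the units; it does not force $\theta(g)=\theta(g')\Rightarrow g'\in\cals g\cals$. Indeed, for the paper's $\theta$ two units $x,x'$ lying in the same ergodic component of $\cals$ but in different $\cals$-orbits satisfy $\theta(1_x)=\theta(1_{x'})$ even though $1_{x'}\notin\cals 1_x\cals$; the fibres of $\theta$ are strictly coarser than double cosets (they are double cosets only ``up to the ergodic decomposition''). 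Your step ``after correcting the range and source of $g'$ by elements of $\cals$'' silently assumes $r(g')\in[r(g)]_\cals$, which is exactly the assumption that fails. A secondary, repairable issue: well-definedness of the product needs $g\cals g^{-1}\subset\cals$, whereas $\phi_n\in\End_\calg(\cals)$ only gives $\phi_n^{-1}\cals\phi_n\subset\cals$; one must also invoke the ``inverse'' choice function $\phi_m$ with $x\phi_n\phi_m\in\cals$ (itself in $\End_\calg(\cals)$) to conjugate in the other direction. But the fatal problem is the first one: without passing to the ergodic decomposition you do not obtain a discrete \emph{measured} groupoid at all, and properties (2) and (3) cannot even be formulated Borel-measurably on your quotient.
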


The pair $(\calq, \zeta)$ in this theorem is called the \textit{quotient groupoid} of $(\calg, \mu)$ by $\cals$, and $\calq$ is denoted by $\calg /\cals$.

\begin{proof}[Proof of Theorem \ref{thm-quotient}]
We may assume that the index function for $\cals <\calg$ is constant, and let $J$ be a countable set whose cardinality is the index of $\cals$ in $\calg$.
Since $\cals \vartriangleleft \calg$, we have a family $(\phi_j)_{j\in J}$ of choice functions for $\cals <\calg$ such that $\phi_j\in \End_\calg(\cals)$ for all $j\in J$.
Let $\theta \colon (\calg^0, \mu)\to (Z, \zeta)$ be the ergodic decomposition map for $\cals$ with $\theta_*\mu =\zeta$.
We will make the set $Z\times J$ into a groupoid whose unit space is identified with $Z$.

Let $x\in \calg^0$ and $j, k\in J$.
Then there exists a unique $l\in J$ such that $x\phi_j\phi_k\sim_\cals x\phi_l$ since $(\phi_j)_{j\in J}$ is a family of choice functions for $\cals <\calg$.
This $l$ is denoted by $j\ast_xk$.

\begin{lem}\label{lem-ast-s-inv}
Fix $j, k\in J$.
Then the map $\calg^0\ni x\mapsto j\ast_xk$ is $\cals$-invariant.
\end{lem}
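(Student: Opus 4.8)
The plan is to reduce everything to Lemma \ref{lem-s-inv}, which already isolates the two facts about $\End_\calg(\cals)$ that are needed here: that it is closed under taking products of $r$-sections, and that the coincidence up to $\sim_\cals$ of two such $r$-sections is detected on an $\cals$-invariant set. So I expect no genuinely new input to be required; the work is entirely in packaging.

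First I would recall that a Borel function $f\colon \calg^0\to J$ with countable target is $\cals$-invariant exactly when each of its level sets $f^{-1}(l)$ is an $\cals$-invariant subset of $\calg^0$. Hence it suffices to prove that for every $l\in J$ the set $\{\, x\in \calg^0 \mid j\ast_xk=l\,\}$ is $\cals$-invariant. By the very definition of $j\ast_xk$ together with the choice-function property of $(\phi_i)_{i\in J}$, for almost every $x$ there is a unique $l\in J$ with $x\phi_j\phi_k\sim_\cals x\phi_l$, and that $l$ is $j\ast_xk$. Thus, up to a null set,
\[
\{\, x\in \calg^0 \mid j\ast_xk=l\,\}=\{\, x\in \calg^0 \mid x\phi_j\phi_k\sim_\cals x\phi_l\,\}.
\]

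The key observation is then that the right-hand side is precisely of the form handled by Lemma \ref{lem-s-inv}(i). Indeed, by Lemma \ref{lem-s-inv}(ii) the product $\phi_j\phi_k$ again lies in $\End_\calg(\cals)$, and $\phi_l\in \End_\calg(\cals)$ by hypothesis; applying Lemma \ref{lem-s-inv}(i) to the pair $(\phi_j\phi_k,\phi_l)$ shows this set is $\cals$-invariant. Since $l\in J$ was arbitrary and $J$ is countable, all level sets of $x\mapsto j\ast_xk$ are $\cals$-invariant, so the map itself is $\cals$-invariant, as claimed.

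I do not anticipate a serious obstacle: the content sits in Lemma \ref{lem-s-inv}, and the only point requiring care is the null-set bookkeeping, namely fixing a single conull subset of $\calg^0$ on which the choice-function property holds, so that the sets above genuinely partition $\calg^0$ and the identification of $\{\,j\ast_\bullet k=l\,\}$ with $\{\,x\phi_j\phi_k\sim_\cals x\phi_l\,\}$ is simultaneously valid for all $l$. Should one prefer not to invoke Lemma \ref{lem-s-inv} as a black box, the alternative is a direct computation: given $\gamma\in\cals$ with $r(\gamma)=x$ and $s(\gamma)=y$, one uses $\phi_i\in\End_\calg(\cals)$ to transport the relation $x\phi_j\phi_k\sim_\cals x\phi_l$ through $\gamma$ and land back in $\cals$, thereby obtaining $y\phi_j\phi_k\sim_\cals y\phi_l$; but this is exactly the conjugation argument already carried out in the proof of Lemma \ref{lem-s-inv}(i), so citing that lemma is the efficient route.
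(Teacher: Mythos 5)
Your proof is correct and is essentially the paper's argument in more economical packaging: the paper proves the lemma by redoing the conjugation computation $(\phi_j\phi_k)^{-1}\gamma(\phi_j\phi_k)=(\phi_j\phi_k)^{-1}\gamma(x\phi_l)(x\phi_l)^{-1}(x\phi_j\phi_k)$ inline, which is exactly the computation underlying Lemma \ref{lem-s-inv}(i) applied to the pair $(\phi_j\phi_k,\phi_l)$, while you simply cite Lemma \ref{lem-s-inv}(i) and (ii) after identifying the level set $\{x\mid j\ast_xk=l\}$ with $\{x\mid x\phi_j\phi_k\sim_\cals x\phi_l\}$. The reduction is legitimate (note that $r(\phi_j\phi_k)=\calg^0$ since $r(\phi_k)=\calg^0$, so Lemma \ref{lem-s-inv} applies), and your null-set remark is the only bookkeeping point, handled the same way in the paper.
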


\begin{proof}
Pick $\gamma \in \cals$ and put $x=s(\gamma)$, $y=r(\gamma)$ and $l=j\ast_xk$.
Then
\[(\phi_j\phi_k)^{-1}\gamma (\phi_j\phi_k)=(\phi_j\phi_k)^{-1}\gamma (x\phi_l)(x\phi_l)^{-1}(x\phi_j\phi_k).\]
The left hand side belongs to $\cals$ (because $\phi_j\phi_k\in \End_\calg(\cals)$), and $(x\phi_l)^{-1}(x\phi_j\phi_k)$ belongs to $\cals$ by the definition of $l$.
Therefore $(\phi_j\phi_k)^{-1}\gamma \phi_l\in \cals$, which is equal to
\[(y\phi_j\phi_k)^{-1}(y\phi_l)(\phi_l^{-1}\gamma \phi_l).\]
Since $\phi_l^{-1}\gamma \phi_l\in \cals$, we have $(y\phi_j\phi_k)^{-1}(y\phi_l)\in \cals$.
Thus $l=j\ast_y k$.
\end{proof}

By Lemma \ref{lem-ast-s-inv}, for all $j, k\in J$, we obtain a Borel map $Z\to J$, $z\mapsto j\ast_zk$ such that $j\ast_{\theta(x)}k=j\ast_xk$ for almost every $x\in \calg^0$.

\begin{lem}\label{lem-Phi-j}
Fix $j\in J$.
Then the map $\calg^0\to Z$, $x\mapsto \theta(\bar{\phi}_j(x))$ is $\cals$-invariant.
\end{lem}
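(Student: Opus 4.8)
The plan is to unwind the definition of $\cals$-invariance and reduce it to the one property of the ergodic decomposition map that we actually need: $\theta$ is, by construction, constant on $\cals$-orbits, i.e.\ $\theta(u)=\theta(v)$ whenever $u$ and $v$ are joined by an element of $\cals$. I would state this explicitly at the outset so the final step is unambiguous. Concretely, saying that the map $x\mapsto \theta(\bar{\phi}_j(x))$ is $\cals$-invariant means $\theta(\bar{\phi}_j(s(\gamma)))=\theta(\bar{\phi}_j(r(\gamma)))$ for every $\gamma \in \cals$. By the property just recalled, it therefore suffices to produce an element of $\cals$ whose range is $\bar{\phi}_j(r(\gamma))$ and whose source is $\bar{\phi}_j(s(\gamma))$; that is, to show $\bar{\phi}_j(s(\gamma))\sim_\cals \bar{\phi}_j(r(\gamma))$.

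The element I would produce is exactly the conjugate $V_{\phi_j}(\gamma)=\phi_j^{-1}\gamma \phi_j$. Fixing $\gamma \in \cals$ and writing $x=s(\gamma)$, $y=r(\gamma)$, the first step is the range/source bookkeeping for the $r$-section $\phi_j$: since $r(\phi_j)=\calg^0$, the product $\phi_j^{-1}\gamma \phi_j$ is the single element $(y\phi_j)^{-1}\gamma (x\phi_j)$, whose range is $s(y\phi_j)=\bar{\phi}_j(y)$ and whose source is $s(x\phi_j)=\bar{\phi}_j(x)$. The second step is to invoke the hypothesis $\phi_j\in \End_\calg(\cals)$, which gives $V_{\phi_j}(\gamma)\in \cals$. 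Combining the two, this element witnesses $\bar{\phi}_j(x)\sim_\cals \bar{\phi}_j(y)$, and applying the $\cals$-invariance of $\theta$ then finishes the argument.

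The part requiring care---rather than a genuine obstacle---is the identification of $\phi_j^{-1}\gamma \phi_j$ with a single groupoid element and the correct reading of its range and source. This is the same computation used in the proof of Lemma \ref{lem-ast-s-inv} (there applied to $\phi_j\phi_k$), so I would model the bookkeeping on that proof. Once the conjugate is correctly placed with range $\bar{\phi}_j(y)$ and source $\bar{\phi}_j(x)$, the argument is immediate and essentially symbolic.
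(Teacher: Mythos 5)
Your proposal is correct and follows exactly the paper's argument: conjugate $\gamma\in\cals$ by $\phi_j$, note that $\phi_j^{-1}\gamma\phi_j=(r(\gamma)\phi_j)^{-1}\gamma(s(\gamma)\phi_j)$ lies in $\cals$ because $\phi_j\in\End_\calg(\cals)$, read off its range $\bar{\phi}_j(r(\gamma))$ and source $\bar{\phi}_j(s(\gamma))$, and conclude by $\cals$-invariance of the ergodic decomposition map $\theta$. The only difference is that you spell out the range/source bookkeeping that the paper states without computation.
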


\begin{proof}
For every $\gamma \in \cals$, we have $\phi_j^{-1}\gamma \phi_j\in \cals$, and its range and source are $\bar{\phi}_j(r(\gamma))$ and $\bar{\phi}_j(s(\gamma))$, respectively.
Hence $\theta(\bar{\phi}_j(r(\gamma)))=\theta(\bar{\phi}_j(s(\gamma)))$.
\end{proof}

We thus obtain a Borel map $\Phi_j\colon Z\to Z$ such that
\[\Phi_j(\theta(x))=\theta(\bar{\phi}_j(x))=\theta(s(x\phi_j))\]
for almost every $x\in \calg^0$.

We set $\calq =Z\times J$.
We define the range and source maps $r, s\colon \calq \to Z$ by 
\[r(z, j)=z,\quad s(z, j)=\Phi_j(z),\quad (z, j)\in \calq.\]
These maps are Borel.
We define the product on $\calq$ by
\[(z, j)(\Phi_j(z), k)=(z, j\ast_zk).\]
This is Borel since the map $z\mapsto j\ast_zk$ is Borel for all fixed $j, k\in J$. 
We check the source of the right hand side is equal to the source of $(\Phi_j(z), k)$, that is, we check the equation
\[\Phi_{j\ast_zk}(z)=\Phi_k(\Phi_j(z)).\]
We put $l=j\ast_zk$ and $w=\Phi_j(z)$.
For $x, y\in \calg^0$ with $\theta(x)=z$ and $\theta(y)=w$, we have
\begin{align*}
\Phi_l(z)=\theta(s(x\phi_l))=\theta(s(x\phi_j\phi_k))=\theta(s(s(x\phi_j)\phi_k))=\theta(s(y\phi_k))=\Phi_k(w),
\end{align*}
which is the desired equation.
Here the second equation follows from the definition of $l$.
The fourth equation holds because $\theta(s(x\phi_j))=\Phi_j(z)=w=\theta(y)$ and hence $s(x\phi_j)$ and $y$ belong to the same $\theta$-fiber, and the map $\calg^0\ni u\mapsto \theta(s(u\phi_k))$ is $\cals$-invariant by Lemma \ref{lem-Phi-j}.

\begin{lem}
The product on $\calq$ is associative. %, and the multiplication map is measurable.
\end{lem}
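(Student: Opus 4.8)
The plan is to reduce associativity to a single identity among the index-multiplication maps $\ast_z$, and then to verify that identity by lifting everything back to $\calg$, where the relation $\phi_\ell\in \End_\calg(\cals)$ does the real work.

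First I would unwind what the statement asserts. A composable triple in $\calq=Z\times J$ has the form $(z,j)$, $(w,k)$, $(v,\ell)$ with $w=\Phi_j(z)$ and $v=\Phi_k(w)=\Phi_k(\Phi_j(z))$. Using the identity $\Phi_{j\ast_zk}(z)=\Phi_k(\Phi_j(z))$ verified just above, one sees that $v=\Phi_{j\ast_zk}(z)$, so both bracketings are genuine products; the left association gives $(z,(j\ast_zk)\ast_z\ell)$ and the right association gives $(z,j\ast_z(k\ast_w\ell))$. Hence associativity is exactly the identity
\[(j\ast_zk)\ast_z\ell=j\ast_z\bigl(k\ast_{\Phi_j(z)}\ell\bigr)\]
for all $j,k,\ell\in J$ and almost every $z\in Z$. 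As $J$ is countable, it suffices to prove this for each fixed triple $(j,k,\ell)$ off one null set, which I would do by lifting to $\calg$ through $\theta$ using $j\ast_{\theta(x)}k=j\ast_xk$ and $\Phi_j(\theta(x))=\theta(s(x\phi_j))$.

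The technical core is a right-multiplication invariance of $\sim_\cals$: if $g,h\in x\calg$ satisfy $g\sim_\cals h$ and $\psi\in \End_\calg(\cals)$ with $r(\psi)=\calg^0$, then $g\psi\sim_\cals h\psi$. This is a one-line check: with $\gamma=h^{-1}g\in \cals$, one has $(h\psi)^{-1}(g\psi)=(s(h)\psi)^{-1}\gamma(s(g)\psi)=V_\psi(\gamma)\in \cals$. This is the step I expect to be the main obstacle, since the source changes when one multiplies on the right by $\psi$ and it is precisely the $\End_\calg(\cals)$-condition that repairs this; by contrast, left-multiplication by a fixed element $p$ preserves $\sim_\cals$ for free, as $(ph)^{-1}(pg)=h^{-1}g$, and I would use this freely below.

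With these tools I would fix $j,k,\ell$ and a generic $x$ with $z=\theta(x)$, so $j\ast_zk=j\ast_xk=:m$, i.e.\ $x\phi_j\phi_k\sim_\cals x\phi_m$. Applying the invariance step with $\psi=\phi_\ell$ and then the definition $x\phi_m\phi_\ell\sim_\cals x\phi_{m\ast_x\ell}$, transitivity of $\sim_\cals$ yields
\[x\phi_j\phi_k\phi_\ell\sim_\cals x\phi_{(j\ast_xk)\ast_x\ell}.\]
For the other bracketing I set $y=s(x\phi_j)=\bar{\phi}_j(x)$, so $\theta(y)=\Phi_j(z)=w$ and $k\ast_w\ell=k\ast_y\ell=:n$, giving $y\phi_k\phi_\ell\sim_\cals y\phi_n$. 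Left-multiplying this relation by the common factor $x\phi_j$, via $x\phi_j\phi_k\phi_\ell=(x\phi_j)(y\phi_k\phi_\ell)$ and $x\phi_j\phi_n=(x\phi_j)(y\phi_n)$, gives $x\phi_j\phi_n\sim_\cals x\phi_j\phi_k\phi_\ell$, hence $x\phi_j\phi_n\sim_\cals x\phi_{(j\ast_xk)\ast_x\ell}$. Finally, the choice-function property makes the representatives $(x\phi_a)_{a\in J}$ pairwise $\sim_\cals$-inequivalent (if $x\phi_a\sim_\cals x\phi_b$ then uniqueness applied to $g=x\phi_a$ forces $a=b$), so the defining relation $x\phi_j\phi_n\sim_\cals x\phi_{j\ast_xn}$ forces $j\ast_xn=(j\ast_xk)\ast_x\ell$, which is the required identity. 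Pushing this back down through $\theta$ completes the proof.
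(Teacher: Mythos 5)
Your proposal is correct and follows essentially the same route as the paper: reduce associativity to the identity $(j\ast_{z}k)\ast_{z}\ell=j\ast_{z}(k\ast_{\Phi_j(z)}\ell)$, lift to $\calg$, and compare both sides with $x\phi_j\phi_k\phi_\ell$ using that $\phi_\ell\in\End_\calg(\cals)$ makes $\sim_\cals$ stable under right multiplication while left multiplication by $x\phi_j$ preserves it for free. The only difference is that you isolate the right-multiplication invariance and the uniqueness of representatives as explicit steps, which the paper uses implicitly.
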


\begin{proof}
For each $x\in \calg^0$, write $\bar{x}=\theta(x)$ for simplicity.
Pick $x, y, z\in \calg^0$ and $j, k, l\in J$, and suppose that the two products $(\bar{x}, j)(\bar{y}, k)$, $(\bar{y}, k)(\bar{z}, l)$ are defined, i.e., $\bar{y}=\Phi_j(\bar{x})$ and $\bar{z}=\Phi_k(\bar{y})$.
We verify the equation
\[[(\bar{x}, j)(\bar{y}, k)](\bar{z}, l)=(\bar{x}, j)[(\bar{y}, k)(\bar{z}, l)].\]
The left hand side is
\[(\bar{x}, j\ast_{\bar{x}}k)(\bar{z}, l)=(\bar{x}, (j\ast_{\bar{x}}k)\ast_{\bar{x}}l),\]
and the right hand side is
\[(\bar{x}, j)(\bar{y}, k\ast_{\bar{y}}l)=(\bar{x}, j\ast_{\bar{x}}(k\ast_{\bar{y}}l)).\]
Therefore it suffices to verify the equation
\begin{equation}\label{eq-ast}
(j\ast_{\bar{x}}k)\ast_{\bar{x}}l=j\ast_{\bar{x}}(k\ast_{\bar{y}}l).
\end{equation}

We put $m=j\ast_{\bar{x}}k$.
For $n\in J$, $n=m\ast_{\bar{x}}l$ if and only if $x\phi_n\sim_\cals x\phi_{m}\phi_l$.
We have
\[x\phi_m\phi_l\sim_\cals x\phi_j\phi_k\phi_l\]
because $x\phi_m\sim_\cals x\phi_j\phi_k$ and $\phi_l\in \End_\calg(\cals)$.
Regarding to the right hand side of equation (\ref{eq-ast}), we put $p=k\ast_{\bar{y}}l$.
For $n\in J$, $n=j\ast_{\bar{x}}p$ if and only if $x\phi_n\sim_\cals x\phi_j\phi_p$.
Putting $w=s(x\phi_j)$, we have
\[x\phi_j\phi_p\sim_\cals x\phi_j\phi_k\phi_l\]
because $\bar{y}=\Phi_j(\bar{x})=\bar{w}$ and hence $w\phi_p\sim_\cals w\phi_k\phi_l$.
Thus equation (\ref{eq-ast}) follows.
\end{proof}

\begin{lem}
For every $z\in Z$, there exists $u_z\in \calq$ such that $r(u_z)=s(u_z)=z$, $u_zg=g$ for all $g\in r^{-1}(z)$, and $hu_z=h$ for all $h\in s^{-1}(z)$.
\end{lem}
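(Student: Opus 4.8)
The plan is to exhibit, for almost every $z\in Z$, a distinguished index $e(z)\in J$ and to set $u_z=(z, e(z))$. The guiding observation is that, since $(\phi_j)_{j\in J}$ is a family of choice functions for $\cals<\calg$, for almost every $x\in \calg^0$ there is a unique index $j_0(x)\in J$ with $x\sim_\cals x\phi_{j_0(x)}$ (apply the choice-function property to $g=x\in x\calg$). Because $x$ is a unit, $x\sim_\cals x\phi_{j_0(x)}$ is equivalent to $x\phi_{j_0(x)}\in \cals$. I would then take $u_z=(z, e(z))$, where $e\colon Z\to J$ is obtained from $x\mapsto j_0(x)$ by passing to the ergodic-decomposition space $(Z, \zeta)$.

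First I would check that $x\mapsto j_0(x)$ is $\cals$-invariant, so that it descends to a Borel map $e\colon Z\to J$ with $e(\theta(x))=j_0(x)$ for almost every $x$. For $\gamma\in \cals$ with $x=s(\gamma)$, $y=r(\gamma)$ and $x\phi_{j_0}\in \cals$ (writing $j_0=j_0(x)$), I would use $\phi_{j_0}\in \End_\calg(\cals)$ to get $\phi_{j_0}^{-1}\gamma \phi_{j_0}=(y\phi_{j_0})^{-1}\gamma (x\phi_{j_0})\in \cals$, and then write $(y\phi_{j_0})^{-1}=[(y\phi_{j_0})^{-1}\gamma (x\phi_{j_0})]\,[(x\phi_{j_0})^{-1}\gamma^{-1}]$ as a product of two elements of $\cals$, whence $y\phi_{j_0}\in \cals$ and $j_0(y)=j_0$ by uniqueness. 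From $x\phi_{e(z)}\in \cals$ it also follows that $\theta(s(x\phi_{e(z)}))=\theta(x)$, that is, $\Phi_{e(z)}(z)=z$, so that $r(u_z)=s(u_z)=z$ as required.

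Then I would verify the two unit laws. The left identity $u_zg=g$ for $g=(z, k)\in r^{-1}(z)$ reduces, via the definition of the product on $\calq$ together with $\Phi_{e(z)}(z)=z$, to $e(z)\ast_z k=k$; and the right identity $hu_z=h$ for $h=(w, k)\in s^{-1}(z)$ (so that $\Phi_k(w)=z$) reduces to $k\ast_w e(z)=k$. Both are instances of the fact that multiplying a choice function by an element of $\cals$ does not change its $\sim_\cals$-class: choosing $x$ with $\theta(x)=z$ and using $x\phi_{e(z)}\in \cals$ together with $\phi_k\in \End_\calg(\cals)$, I would check $x\phi_{e(z)}\phi_k\sim_\cals x\phi_k$ and $x\phi_k\phi_{e(z)}\sim_\cals x\phi_k$, which by uniqueness of the choice index yield the two equalities.

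The main obstacle is the $\cals$-invariance of $x\mapsto j_0(x)$: the computation itself is short, but it requires careful tracking of ranges and sources so that every product lands in $\cals$ and is composable. The remaining verifications are then routine consequences of the choice-function property and of the fact that each $\phi_j$ lies in $\End_\calg(\cals)$. All identities hold for almost every $x$, hence for almost every $z\in (Z, \zeta)$, which is exactly what the statement demands.
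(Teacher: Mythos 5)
Your construction is exactly the paper's: $u_z=(z,e(z))$ where $e(z)$ comes from the unique index $j_0(x)$ with $x\phi_{j_0(x)}\in\cals$, the $\cals$-invariance of $x\mapsto j_0(x)$ (which the paper gets by citing Lemma \ref{lem-s-inv}~(i) with $\psi=\calg^0$, while you verify it by a direct and correct computation), and the reduction of the two unit laws to $e(z)\ast_zk=k$ and $k\ast_we(z)=k$. The only flaw is in your verification of the right unit law: having correctly reduced it to $k\ast_we(z)=k$, you must then work with $x'\in\theta^{-1}(w)$, not with $x\in\theta^{-1}(z)$; the relevant fact is that $s(x'\phi_k)$ lies in $\theta^{-1}(\Phi_k(w))=\theta^{-1}(z)$, so $s(x'\phi_k)\phi_{e(z)}\in\cals$ and hence $x'\phi_k\phi_{e(z)}\sim_\cals x'\phi_k$ directly (no appeal to $\End_\calg(\cals)$ is needed for this side). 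As literally written, your relation $x\phi_k\phi_{e(z)}\sim_\cals x\phi_k$ for $x\in\theta^{-1}(z)$ would require $e(\Phi_k(z))=e(z)$, which need not hold, and in any case it would establish $k\ast_ze(z)=k$ rather than the needed $k\ast_we(z)=k$; this is a bookkeeping slip, not a missing idea, and the repair is immediate.
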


\begin{proof}
For every $x\in \calg^0$, there exists a unique $i_x\in J$ such that $x\phi_{i_x}\sim_\cals x$, i.e., $x\phi_{i_x}\in \cals$.
The map $x\mapsto i_x$ is Borel.
By Lemma \ref{lem-s-inv} (i), the map is $\cals$-invariant (since $\calg^0\in \End_\calg(\cals)$).
We obtain a Borel map $Z\to J$, $z\mapsto i_z$.

Fix $z\in Z$.
We show that $(z, i_z)$ is the unit of $\calq$ at $z$.
Choosing $x\in \calg^0$ with $\theta(x)=z$, we have
\[\Phi_{i_z}(z)=\Phi_{i_x}(\theta(x))=\theta(s(x\phi_{i_x}))=\theta(x)=z,\]
where the third equation holds because $x\phi_{i_x}\in \cals$.
It follows that the source of $(z, i_z)$ is $z$.

For every $j\in J$, we have $(z, i_z)(z, j)=(z, i_z\ast_zj)$ and $x\phi_{i_z}\phi_j\sim_\cals x\phi_j$ because $x\phi_{i_z}\in \cals$ and $\phi_j\in \End_\calg(\cals)$.
Therefore $i_z\ast_zj=j$.

Putting $w=\Phi_j(z)$, we have $(z, j)(w, i_w)=(z, j\ast_zi_w)$ and $x\phi_j\phi_{i_w}\sim_\cals x\phi_j$ because $\theta(s(x\phi_j))=\Phi_j(z)=w$ and hence $s(x\phi_j)\phi_{i_w}\in \cals$.
Therefore $j\ast_zi_w=j$.
\end{proof}

\begin{lem}\label{lem-inverse}
For every $g\in \calq$, there exists $h\in \calq$ such that the product $gh$ is defined and we have $gh=u_{r(g)}$ and $hg=u_{s(g)}$.
Moreover the map $g\mapsto h$ is Borel.
\end{lem}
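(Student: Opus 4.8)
The plan is to construct, for each $g=(z,j)\in\calq$, an explicit candidate inverse $h$ lying over the source of $g$, and then to verify the two groupoid identities $gh=u_{r(g)}$ and $hg=u_{s(g)}$ by translating them into statements about the choice functions $(\phi_j)_{j\in J}$ and the operation $\ast$. Fix $g=(z,j)$ and choose any $x\in\calg^0$ with $\theta(x)=z$; write $w=s(x\phi_j)=\bar\phi_j(x)$, so that $\theta(w)=\Phi_j(z)$. Since $(\phi_n)_{n\in J}$ is a family of choice functions for $\cals<\calg$, applied to the element $(x\phi_j)^{-1}\in w\calg$ there is a unique $k\in J$ with $(x\phi_j)^{-1}\sim_\cals w\phi_k$; taking inverses, this is exactly the condition $x\phi_j\phi_k\in\cals$. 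I would then set $h=(\Phi_j(z),k)=(s(g),k)$, so that the product $gh$ is defined.

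Before $h$ is well defined I must check that $k$ does not depend on the representative $x$, i.e. that the map $x\mapsto k$ is $\cals$-invariant; this is where normality enters. Writing $\psi=\phi_j\phi_k$, Lemma \ref{lem-s-inv}(ii) gives $\psi\in\End_\calg(\cals)$, and if $\gamma\in\cals$ joins $x$ to $x'$, then a short computation with $V_\psi(\gamma)=\psi^{-1}\gamma\psi\in\cals$ together with $x\psi=x\phi_j\phi_k\in\cals$ shows $x'\phi_j\phi_k\in\cals$, so the same $k$ is selected at $x'$. Hence $k$ descends to a Borel function of $(z,j)$, and $g\mapsto h$ is Borel.

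For the identity $gh=u_{r(g)}$: by construction $x\phi_j\phi_k\in\cals$, hence $x\phi_j\phi_k\sim_\cals x\sim_\cals x\phi_{i_z}$, so by uniqueness in the definition of $\ast$ we get $j\ast_z k=i_z$; therefore $gh=(z,j\ast_z k)=(z,i_z)=u_z$. For the identity $hg=u_{s(g)}$, the key computation is that $\phi_j\in\End_\calg(\cals)$ forces $w\phi_k\phi_j\in\cals$. Indeed, with $u=s(x\phi_j\phi_k)$ one finds $V_{\phi_j}(x\phi_j\phi_k)=(x\phi_j)^{-1}(x\phi_j\phi_k)(u\phi_j)=w\phi_k\phi_j$, which lies in $\cals$ precisely because $x\phi_j\phi_k\in\cals$ and $\phi_j\in\End_\calg(\cals)$. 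Consequently $k\ast_{\Phi_j(z)}j=i_{\Phi_j(z)}$; and since $x\phi_j\phi_k\in\cals$ joins $x$ to $u$ it also gives $\theta(u)=\theta(x)=z$, i.e. $s(h)=\Phi_k(\Phi_j(z))=z=r(g)$, so $hg$ is defined and equals $(\Phi_j(z),i_{\Phi_j(z)})=u_{\Phi_j(z)}=u_{s(g)}$.

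The main obstacle I anticipate is the second identity: obtaining $hg=u_{s(g)}$ rather than merely a one-sided inverse. The natural choice of $k$ makes $gh=u_{r(g)}$ transparent, but showing that the \emph{same} $h$ is simultaneously a left inverse is exactly where the endomorphism property $\phi_j\in\End_\calg(\cals)$ (i.e. the normality of $\cals$) must be invoked, via the conjugation $V_{\phi_j}$ carrying $x\phi_j\phi_k\in\cals$ to $w\phi_k\phi_j\in\cals$. The $\cals$-invariance needed for Borelness of $g\mapsto h$ is a secondary but similar point, again resting on Lemma \ref{lem-s-inv}.
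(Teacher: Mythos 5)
Your proposal is correct and follows essentially the same route as the paper: you define $k$ by the condition $x\phi_j\phi_k\in\cals$, deduce Borelness and independence of the representative $x$ from the $\cals$-invariance in Lemma \ref{lem-s-inv}, get $j\ast_zk=i_z$ immediately, and obtain $k\ast_{\Phi_j(z)}j=i_{\Phi_j(z)}$ by conjugating $x\phi_j\phi_k$ by $\phi_j\in\End_\calg(\cals)$, exactly as in the paper's proof of Lemma \ref{lem-inverse}.
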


\begin{proof}
For all $x\in X$ and $j\in J$, there exists a unique $k=\kappa(x, j)\in J$ such that $x\phi_j\phi_k\in \cals$ (or equivalently $s(x\phi_j)\phi_k\sim_\cals (x\phi_j)^{-1}$).
Applying Lemma \ref{lem-s-inv} (i) to $\phi_j\phi_k\in \End_\calg(\cals)$ for fixed $j$ and $k$ and $\calg^0\in \End_\calg(\cals)$, we see that the map $(x, j)\mapsto \kappa(x, j)$ induces the Borel map $\kappa \colon Z\times J\to J$, denoted by the same symbol $\kappa$.

Fix $x\in X$ and $j\in J$, and put $z=\theta(x)$ and $k=\kappa(x, j)$.
Then
\[(z, j)(\Phi_j(z), k)=(z, j\ast_zk)=(z, i_z),\]
where the last equation holds because $x\phi_j\phi_k\in \cals$ and hence $x\phi_j\phi_k\sim_\cals x\phi_{i_x}$.
The source of $(\Phi_j(z), k)$ is $z$, and the product $(\Phi_j(z), k)(z, j)$ is defined.
Putting $y=s(x\phi_j)$, we have $y\phi_k\phi_j=\phi_j^{-1}\gamma \phi_j$, where $\gamma \coloneqq x\phi_j\phi_k\in \cals$.
Since $\phi_j\in \End_\calg(\cals)$, we have $y\phi_k\phi_j\in \cals$.
Thus $k\ast_{\Phi_j(z)}j=i_{\Phi_j(z)}$ and $(\Phi_j(z), k)(z, j)=(\Phi_j(z), i_{\Phi_j(z)})$.

We proved that the inverse of $(z, j)\in \calq$ is $(\Phi_j(z), k)$.
It follows from measurability of the maps $\Phi_j$ and $\kappa$ that the inverse map $(z, j)\mapsto (z, j)^{-1}$ is Borel.
\end{proof}

We have constructed the discrete Borel groupoid $\calq$.

\begin{lem}
The pair $(\calq, \zeta)$ is a discrete measured groupoid.
\end{lem}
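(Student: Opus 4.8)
The plan is to verify the two conditions in the definition of a discrete measured groupoid. Since $\theta_*\mu=\zeta$ and $\mu$ is a probability measure, $\zeta$ is a probability measure on $\calq^0=Z$, and we have already checked that $\calq$ is a discrete Borel groupoid with countable fibers $z\calq=\{z\}\times J$. What remains is to prove that the two measures $\zeta_\calq^r$ and $\zeta_\calq^s$ on $\calq$ are equivalent. By the criterion recorded after (\ref{eq-mu}), it suffices to show that for every Borel bisection $\phi$ of $\calq$ the two measures $\bar{\phi}_*(\zeta|_{r(\phi)})$ and $\zeta|_{s(\phi)}$ on $s(\phi)$ are equivalent.

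The key step is a lemma asserting that each map $\Phi_j\colon Z\to Z$ is nonsingular with respect to $\zeta$, i.e.\ $\zeta(\Phi_j^{-1}(N))=0$ whenever $\zeta(N)=0$. To prove it I would use the intertwining relation $\Phi_j\circ\theta=\theta\circ\bar{\phi}_j$ built into the definition of $\Phi_j$, together with the fact, noted after (\ref{eq-mu}), that every Borel $r$-section of the discrete measured groupoid $(\calg,\mu)$ is nonsingular; applied to $\phi_j$ (with $r(\phi_j)=\calg^0$) this gives $\bar{\phi}_{j*}(\mu)\sim \mu|_{s(\phi_j)}$, so that $\bar{\phi}_j$ pulls $\mu$-null sets back to $\mu$-null sets. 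Then, for a $\zeta$-null set $N\subset Z$, the set $\theta^{-1}(N)$ is $\mu$-null, and
\[\theta^{-1}(\Phi_j^{-1}(N))=\bar{\phi}_j^{-1}(\theta^{-1}(N))\]
is $\mu$-null by nonsingularity of $\bar{\phi}_j$; hence $\zeta(\Phi_j^{-1}(N))=\mu(\theta^{-1}(\Phi_j^{-1}(N)))=0$.

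Granting this lemma, I would finish as follows. Let $\phi$ be a Borel bisection of $\calq$ and decompose it as the countable disjoint union $\phi=\bigsqcup_{j\in J}\phi_{(j)}$ with $\phi_{(j)}=\phi\cap(Z\times\{j\})$; on $\phi_{(j)}$ the map $\bar{\phi}$ agrees with the restriction of $\Phi_j$. If $N\subset s(\phi)$ is $\zeta$-null, then $\bar{\phi}^{-1}(N)\cap r(\phi_{(j)})=\Phi_j^{-1}(N)\cap r(\phi_{(j)})$ is $\zeta$-null for each $j$ by the lemma, and taking the union over the countable set $J$ shows $\bar{\phi}_*(\zeta|_{r(\phi)})\ll\zeta|_{s(\phi)}$. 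Applying the same argument to the Borel bisection $\phi^{-1}$, whose associated map is $\bar{\phi}^{-1}$, yields the reverse absolute continuity $\zeta|_{s(\phi)}\ll\bar{\phi}_*(\zeta|_{r(\phi)})$, and hence the desired equivalence.

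I expect the main obstacle to be that the slices $Z\times\{j\}$ need not be bisections---equivalently, the maps $\Phi_j$ need not be injective---so one cannot simply transport $\zeta$ slice by slice. The device that circumvents this is to argue at the level of measure classes through the nonsingularity lemma and the intertwining relation $\Phi_j\circ\theta=\theta\circ\bar{\phi}_j$, reducing everything to the already-established nonsingularity of $r$-sections of $(\calg,\mu)$; the decomposition of an arbitrary bisection into the countably many pieces $\phi_{(j)}$, together with passage to $\phi^{-1}$ for the reverse inclusion, then upgrades absolute continuity to equivalence.
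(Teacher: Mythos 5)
Your proof is correct, and its engine is the same as the paper's: the intertwining identity $\theta^{-1}(\Phi_j^{-1}(A))=\bar{\phi}_j^{-1}(\theta^{-1}(A))$, combined with $\theta_*\mu=\zeta$ and the nonsingularity of the $r$-section $\phi_j$ in $(\calg,\mu)$, is exactly the computation in the paper's commutative diagram. Where you genuinely diverge is in handling the non-injectivity of $\Phi_j$, which you correctly identify as the obstacle. The paper resolves it by using the inverse map $\kappa$ from the proof of Lemma \ref{lem-inverse} to produce, for each $j$, a countable Borel partition $Z=\bigsqcup_k Z_k$ with $\Phi_k\circ\Phi_j=\mathrm{id}$ on $Z_k$; then $\Phi_j\colon Z_k\to\Phi_j(Z_k)$ is a Borel isomorphism onto a Borel image, both directions of measure-class preservation are read off the same diagram, and an arbitrary bisection is cut into pieces lying over a single pair $(j,k)$. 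You instead prove only the one-sided statement that each $\Phi_j$ pulls $\zeta$-null sets back to $\zeta$-null sets---which requires no injectivity---and recover the reverse absolute continuity by running the identical argument on $\phi^{-1}$, whose $J$-slices are governed by the maps $\Phi_{\kappa(\cdot,\,\cdot)}$. Your route is slightly leaner: it never needs $\Phi_j(Z_k)$ or $\Phi_j(B)$ to be shown Borel, nor the set identity $\Phi_j^{-1}(\Phi_j(B))=B$ on which the paper's second direction implicitly leans. What the paper's finer partition buys in exchange is the explicit presentation of each $\Phi_j$ as a countable union of partial Borel isomorphisms, which is a useful structural fact beyond the statement at hand; for the lemma itself, your symmetric argument suffices.
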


\begin{proof}
In the proof of Lemma \ref{lem-inverse}, we obtained the Borel map $\kappa \colon Z\times J\to J$ such that $(z, j)^{-1}=(\Phi_j(z), \kappa(z, j))$ for all $z\in Z$ and $j\in J$.
It follows that for every $j\in J$, there exists a countable Borel partition $Z=\bigsqcup_{k\in J}Z_{k}$ such that $\Phi_k\circ \Phi_j$ is the identity on $Z_k$.
Therefore the image of every Borel subset of $Z_k$ under $\Phi_j$ is a Borel subset of $Z$, and in particular $\Phi_j(Z_k)$ is Borel.
In the following commutative diagram
\[\xymatrix{
\theta^{-1}(Z_k) \ar[r]^{\bar{\phi}_j} \ar[d]_{\theta} & \theta^{-1}(\Phi_j(Z_k)) \ar[d]^\theta \\
Z_k \ar[r]_{\Phi_j} & \Phi_j(Z_k)}
\]
the isomorphism $\Phi_j\colon Z_k\to \Phi_j(Z_k)$ preserves the measure class of $\zeta$.
Indeed, since $\theta_*\mu =\zeta$, if $A\subset \Phi_j(Z_k)$ is a null set, then the set $\theta^{-1}(\Phi_j^{-1}(A))=\bar{\phi}_j^{-1}(\theta^{-1}(A))$ is null, and $\Phi_j^{-1}(A)$ is null.
If $B\subset Z_k$ is a null set, then the set $\bar{\phi}_j^{-1}(\theta^{-1}(\Phi_j(B)))=\theta^{-1}(\Phi_j^{-1}(\Phi_j(B)))=\theta^{-1}(B)$ is null, and $\Phi_j(B)$ is null.
%the upper horizontal map $\bar{\phi}_j$ preserves the measure class of $\mu$.

Let $\phi$ be a Borel bisection of $\calq$.
After partitioning $\phi$ into countably many Borel subsets, we may assume $\bar{\phi}=\Phi_j$ on $r(\phi)$ and $r(\phi)\subset Z_k$ for some $j$ and $k$.
The above argument shows that $\bar{\phi}_*(\zeta|_{r(\phi)})$ and $\zeta|_{s(\phi)}$ are equivalent.
\end{proof}

We define a map $\theta \colon \calg \to \calq$ by $\theta(g)=(\theta(r(g)), j)$ for $g\in \calq$, where $j\in J$ is determined by $g\sim_\cals r(g)\phi_j$.
In particular we have $\theta(x\phi_j)=(\theta(x), j)$.
This implies that $\theta$ satisfies the class-surjectivity condition stated in the theorem.

We show that $\theta$ is a homomorphism.
For each $x\in \calg^0$, write $\bar{x}=\theta(x)$ for simplicity.
Pick $g, h\in \calg$ with $gh$ defined.
We put $x=r(g)$, $y=r(h)$ and define $j, k\in J$ by $\theta(g)=(\bar{x}, j)$ and $\theta(h)=(\bar{y}, k)$.
Since $g\sim_\cals x\phi_j$, the two points $s(g)=y$ and $s(x\phi_j)$ are in the same $\theta$-fiber and hence $\Phi_j(\bar{x})=\bar{y}$.
It follows that the product $(\bar{x}, j)(\bar{y}, k)$ is defined and is equal to $(\bar{x}, j\ast_{\bar{x}}k)$.
We have
\[x\phi_j\phi_k=g\phi_k\delta,\ \  \text{where}\ \  \delta \coloneqq \phi_k^{-1}g^{-1}\phi_j\phi_k.\]
By the definition of $\theta$, we have $g^{-1}\phi_j, h^{-1}\phi_k\in \cals$.
Since $\phi_k\in \End_\calg(\cals)$, we have $\delta \in \cals$ and hence $gh\sim_\cals g\phi_k\delta =x\phi_j\phi_k$.
Thus $\theta(gh)=(\bar{x}, j\ast_{\bar{x}}k)=\theta(g)\theta(h)$.

For every $g\in \calg$ with $x=r(g)$, we have $g\in \ker \theta$ if and only if $\theta(g)=(\bar{x}, i_{\bar{x}})$.
By the definition of $\theta$, this holds if and only if $g\sim_\cals x\phi_{i_{\bar{x}}}$, i.e., $g\in \cals$.
Thus $\ker \theta =\cals$.

Finally we prove the universal property in condition (3) of the theorem.
Let $\calq'$ be a discrete Borel groupoid and let $\theta' \colon \calg \to \calq'$ be a Borel homomorphism such that $\cals <\ker \theta'$.
For every $j\in J$, the map $\calg^0\to \calq'$, $x\mapsto \theta'(x\phi_j)$ is $\cals$-invariant.
Indeed for every $\gamma \in \cals$, we have $\phi_j^{-1}\gamma \phi_j\in \cals$ and $\theta'(r(\gamma)\phi_j)=\theta'(s(\gamma)\phi_j)$ since $\cals <\ker \theta'$.
Therefore we obtain a Borel map $\tau \colon \calq \to \calq'$ such that $\tau(z, j)=\theta'(x\phi_j)$ for all $z\in Z$ and $j\in J$ with $\theta(x)=z$. 
It follows from $(\tau \circ \theta)(x\phi_j)=\tau(z, j)=\theta'(x\phi_j)$ that $\tau \circ \theta =\theta'$.

To verify that $\tau$ is a homomorphism, pick $z\in Z$ and $j, k\in J$, and put $l=j\ast_zk$.
Then for every $x\in \theta^{-1}(z)$, we have $x\phi_l\sim_\cals x\phi_j\phi_k$ and
\begin{align*}
\tau((z, j)(\Phi_j(z), k))&=\tau(z, l)=\theta'(x\phi_l)=\theta'(x\phi_j\phi_k)=\theta'(x\phi_j)\theta'(s(x\phi_j)\phi_k)\\
&=\tau(z, j)\tau(\Phi_j(z), k).\qedhere
\end{align*}
\end{proof}

The converse assertion of Theorem \ref{thm-quotient} holds.
Namely, if $(\calg, \mu)$ is a discrete measured groupoid, $\calq$ is a discrete Borel groupoid, and $\theta \colon \calg \to \calq$ is a Borel, class-surjective homomorphism, then $\ker \theta \vartriangleleft \calg$.
The proof of this assertion is postponed to Proposition \ref{prop-bowen}, where a stronger assertion is proved.

%%%%%%%%%%%%%%%%%%%%%%%%%%%%%%%%%%%

\subsection{A criterion for quotient groupoids being p.m.p.}

Let $(\calg, \mu)$ be a discrete measured groupoid and let $\cals <\calg$ be a Borel subgroupoid.
We define $\llbracket \calg \rrbracket_\cals$ as the set of all $\phi \in \llbracket \calg \rrbracket$ such that the associated conjutation $V_\phi \colon \calg|_{r(\phi)}\to \calg|_{s(\phi)}$, $\gamma \mapsto \phi^{-1}\gamma \phi$ satisfies the equation $V_\phi(\cals|_{r(\phi)})=\cals|_{s(\phi)}$.
If $\phi, \psi \in \llbracket \calg \rrbracket_\cals$, then $\phi \psi \in \llbracket \calg \rrbracket_\cals$.
The following is inspired by \cite[Theorem 2.15]{fsz}.

\begin{prop}\label{prop-pmp}
Let $(\calg, \mu)$ be a discrete p.m.p.\ groupoid and let $\cals \vartriangleleft \calg$ be a normal Borel subgroupoid.
Let $(\calq, \zeta)$ be the quotient groupoid of $(\calg, \mu)$ by $\cals$.
Then the following two conditions are equivalent:
\begin{enumerate}
\item[(1)] The groupoid $(\calq, \zeta )$ is p.m.p.
\item[(2)] There exists a sequence $(\phi_n)_{n\in \N}$ of $\phi_n \in \llbracket \calg \rrbracket_\cals$ such that $r(\phi_n)$ and $s(\phi_n)$ are both $\cals$-invariant, and for almost every $x\in \calg^0$, the set $\{ \, x\phi_n\mid x\in r(\phi_n),\, n\in \N \, \}$ intersects every $\sim_\cals$-equivalence class in $x\calg$.
\end{enumerate}
\end{prop}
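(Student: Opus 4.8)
The plan is to exploit that the quotient map $\theta \colon \calg \to \calq$ carries normalizing bisections of $\calg$ to bisections of $\calq$, combined with the fact that whether $(\calq,\zeta)$ is p.m.p.\ is tested solely on bisections. Throughout I would use two free facts: first, that $(\cals,\mu)$ is itself p.m.p.\ (every bisection of $\cals$ is a bisection of $\calg$), so $\theta$ is the ergodic decomposition of a p.m.p.\ groupoid and every $\cals$-invariant Borel function on $\calg^0$ factors through $\theta$ up to null sets; second, that every Borel bisection of the p.m.p.\ groupoid $\calg$ automatically preserves $\mu$, so no measure-preservation needs to be imposed by hand.

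For (2) $\Rightarrow$ (1) the core step is to show that each $\phi_n$ descends to a $\zeta$-preserving bisection $\psi_n \coloneqq \theta(\phi_n)$ of $\calq$. For $x \in r(\phi_n)$ define $j(x)\in J$ by $x\phi_n \sim_\cals x\phi_{j(x)}$, so that $\theta(x\phi_n)=(\theta(x),j(x))$. I would first prove that $x\mapsto j(x)$ is $\cals$-invariant: for $\gamma \in \cals$ with $s(\gamma)=x$ and $r(\gamma)=y$ (both in $r(\phi_n)$ by its $\cals$-invariance) the chain
\[
y\phi_n \sim_\cals \gamma(x\phi_n) \sim_\cals \gamma(x\phi_{j(x)}) \sim_\cals y\phi_{j(x)}
\]
holds, where the first equivalence uses $\phi_n \in \llbracket\calg\rrbracket_\cals$, the middle one uses that left translation by $\gamma\in\cals$ preserves $\sim_\cals$, and the last uses $\phi_{j(x)}\in\End_\calg(\cals)$; hence $j(y)=j(x)$. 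Factoring $j$ through $\theta$ exhibits $\psi_n$ as an $r$-section of $\calq$, and applying the same to $\phi_n^{-1}\in\llbracket\calg\rrbracket_\cals$ (whose range and source are again $\cals$-invariant) shows $\psi_n^{-1}$ is an $r$-section, so $\psi_n$ is a bisection. Since $\phi_n$ preserves $\mu$, $\theta_*\mu=\zeta$, and $\theta\circ\bar\phi_n=\bar\psi_n\circ\theta$, one gets $(\bar\psi_n)_*(\zeta|_{r(\psi_n)})=\zeta|_{s(\psi_n)}$. The covering hypothesis in (2) translates into $\bigcup_n\psi_n=\calq$, and a bisection of $\calq$ covered by countably many $\zeta$-preserving bisections is itself $\zeta$-preserving; hence $(\calq,\zeta)$ is p.m.p.

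For (1) $\Rightarrow$ (2) I would first replace the hypothesis that $(\calq,\zeta)$ is p.m.p.\ by a concrete covering family: using the partition $Z=\bigsqcup_k Z_k$ on which $\Phi_k\circ\Phi_j$ is the identity (so $\Phi_j$ is injective on $Z_k$), the sets $\chi_{j,k}\coloneqq\{(z,j):z\in Z_k\}$ are bisections covering $\calq$, and p.m.p.\ of $\calq$ makes each $\bar\chi_{j,k}=\Phi_j|_{Z_k}$ preserve $\zeta$. It then suffices to lift each such $\chi=\{(z,j):z\in Z_0\}$ (with $\Phi_j$ injective and $\zeta$-preserving on $Z_0$) to some $\phi\in\llbracket\calg\rrbracket_\cals$ with $\cals$-invariant range and source and $\theta(\phi)=\chi$: the resulting countable collection then lies in $\llbracket\calg\rrbracket_\cals$, and as its images cover $\calq$ it meets every $\sim_\cals$-class, giving (2). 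The natural first approximation is $\phi_j$ restricted to $U\coloneqq\theta^{-1}(Z_0)$, which satisfies $\theta(\phi_j|_U)=\chi$ and conjugates $\cals|_U$ into $\cals|_W$, where $W\coloneqq\theta^{-1}(\Phi_j(Z_0))$.

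The hard part will be upgrading this approximation to an honest normalizing bisection, since $\bar\phi_j$ need be neither injective nor onto $W$ and conjugates $\cals$ only into, not onto, $\cals$. I would build $\phi$ by a maximality argument: among partial bisections $\phi'$ supported on $\cals$-invariant subsets of $U$ and $W$, satisfying $x\phi'\sim_\cals x\phi_j$ and $V_{\phi'}(\cals|_{r(\phi')})=\cals|_{s(\phi')}$, choose one with $r(\phi')$ of maximal measure. The decisive point, and exactly where the assumption that $\Phi_j$ preserves $\zeta$ is indispensable, is the equality of masses $\mu(U)=\zeta(Z_0)=\zeta(\Phi_j(Z_0))=\mu(W)$, together with the corresponding equality on every $\cals$-invariant piece; combined with ergodicity of $\cals$ on the fibers of $\theta$, this should force the complements of $r(\phi')$ and $s(\phi')$ to be null, so that $\phi'$ is the desired full lift. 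This is the groupoid analogue of the construction in Feldman--Sutherland--Zimmer \cite[Theorem 2.15]{fsz}, and carrying out the matching compatibly with the onto-normalization requirement $V_{\phi'}(\cals|_{r(\phi')})=\cals|_{s(\phi')}$ is the main technical obstacle I anticipate.
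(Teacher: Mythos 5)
Your proposal is correct and follows essentially the same route as the paper: for (2) $\Rightarrow$ (1) you descend the normalizing bisections to $\zeta$-preserving bisections covering $\calq$, and for (1) $\Rightarrow$ (2) you lift bisections of $\calq$ using the fiberwise mass identity supplied by $\zeta$-preservation together with ergodicity of $\cals$ on the $\theta$-fibers to repair injectivity, which is exactly the content of the paper's Claim \ref{claim-phi_n} (carried out there as an explicit inductive exhaustion rather than a maximality argument). The onto-normalization you flag as the main remaining obstacle is in fact automatic: any bisection $\phi'\subset\theta^{-1}(\chi)$ satisfies both $V_{\phi'}(\cals|_{r(\phi')})\subset\cals|_{s(\phi')}$ and $V_{\phi'^{-1}}(\cals|_{s(\phi')})\subset\cals|_{r(\phi')}$, because conjugating an element of $\ker\theta$ by anything lying over $\chi$ or $\chi^{-1}$ again lands in $\ker\theta$, and the two inclusions force equality.
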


\begin{rem}
We define $\aut_\calg(\cals)=[\calg]\cap \llbracket \calg \rrbracket_\cals$.
The proof of the proposition given below shows that if the condition in the proposition holds, then one can find a sequence $(\phi_n)_{n\in \N}$ of $\phi_n \in \aut_\calg(\cals)$ satisfying condition (2).
\end{rem}

\begin{proof}[Proof of Proposition \ref{prop-pmp}]
We may assume that the index function for $\cals <\calg$ is constant.
We put $Z=\calq^0$ and let $\theta \colon \calg \to \calq$ denote the quotient homomorphism with $\theta_*\mu =\zeta$.
We first suppose that there exists a sequence $(\phi_n)_{n\in \N}$ in condition (2).
For every $n\in \N$, the map $r(\phi_n)\to \calq$, $x\mapsto \theta(x\phi_n)$ is $\cals$-invariant.
Indeed for every $\gamma \in \cals$ with $s(\gamma), r(\gamma)\in r(\phi_n)$, we have $\phi_n^{-1}\gamma \phi_n\in \cals$ and hence $\theta(s(\gamma)\phi_n)=\theta(r(\gamma)\phi_n)$.
Since $r(\phi_n)$ is $\cals$-invariant, we have a Borel subset $Z_n\subset Z$ with $\theta^{-1}(Z_n)=r(\phi_n)$.
We obtain a Borel map $\psi_n\colon Z_n\to \calq$ such that $\psi_n(\theta(x))=\theta(x\phi_n)$ for all $x\in r(\phi_n)$, which is a section of the range map of $\calq$.
We identify this $\psi_n$ with its image.

We claim that $\psi_n\in \llbracket \calq \rrbracket$.
Since $s(\phi_n)$ is $\cals$-invariant, there exists a Borel subset $W_n\subset Z$ such that $\theta^{-1}(W_n)=s(\phi_n)$.
As $\phi_n$ induces $\psi_n$, $\phi_n^{-1}$ induces the $r$-section $\xi_n$ of $\calq$ such that $r(\xi_n)=W_n$ and $\theta(y)\xi_n=\theta(y\phi_n^{-1})$ for all $y\in s(\phi_n)$.
Then for almost every $x\in \theta^{-1}(Z_n)$, putting $y=s(x\phi_n)$, we have $y\phi_n^{-1}=(x\phi_n)^{-1}$ and
\begin{align*}
\bar{\xi}_n(\bar{\psi}_n(\theta(x)))&=\bar{\xi}_n(s(\theta(x\phi_n)))=\bar{\xi}_n(\theta(y))=s(\theta(y)\xi_n)=s(\theta(y\phi_n^{-1}))\\
&=r(\theta(x\phi_n))=\theta(x).
\end{align*}
It therefore follows that $\bar{\xi}_n\circ \bar{\psi}_n$ is the identity almost everywhere on $Z_n$.
In particular $\bar{\psi}_n$ is injective after discarding a null set and hence $\psi_n\in \llbracket \calq \rrbracket$.
By symmetry $\bar{\psi}_n\circ \bar{\xi}_n$ is the identity almost everywhere on $W_n$.

In the following commutative diagram, the two downward maps are measure-preserving since $r(\phi_n)$ and $s(\phi_n)$ are $\cals$-invariant:
\[\xymatrix{
r(\phi_n) \ar[r]^{\bar{\phi}_n} \ar[d]_{\theta} & s(\phi_n) \ar[d]^\theta \\
Z_n \ar[r]_{\bar{\psi}_n} & W_n}
\]
Since $(\calg, \mu)$ is p.m.p.\ and $\bar{\phi}_n$ is measure-preserving, the isomorphism $\bar{\psi}_n\colon Z_n\to W_n$ is measure-preserving.
By the assumption that the set $\{ \, x\phi_n\mid x\in r(\phi_n),\, n\in \N \, \}$ intersects every $\sim_\cals$-equivalence class in $x\calg$, we have $\calq =\bigcup_n\psi_n$.
Thus $(\calq, \zeta)$ is p.m.p.

Conversely we suppose that $(\calq, \zeta)$ is p.m.p.
Choose a sequence $(\psi_n)_{n\in \N}$ of $\psi_n\in [ \calq ]$ such that $\calq =\bigcup_n \psi_n$.
We may assume that for every $n$, $\psi_n^{-1}=\psi_m$ for some $m$.

\begin{claim}\label{claim-phi_n}
For every $n$, there exists $\phi_n\in [\calg ]$ such that $\theta(x\phi_n)=\theta(x)\psi_n$ for almost every $x\in \calg^0$.
\end{claim}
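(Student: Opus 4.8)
The plan is to lift the bisection $\psi_n\in[\calq]$ through the quotient homomorphism $\theta$ using the choice functions $(\phi_j)_{j\in J}$, and then to repair the resulting object, which a priori is only an $r$-section, into an honest bisection by a measurable matching inside $\cals$. The p.m.p.\ hypothesis on $(\calg,\mu)$ is exactly what makes the repair succeed.

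First I would coordinatize $\psi_n$. Since $\psi_n\in[\calq]$ and $\calq=Z\times J$ with $r(z,j)=z$, for almost every $z\in Z$ the single element $z\psi_n$ has the form $(z,j_n(z))$ for a Borel function $j_n\colon Z\to J$, so that $\bar{\psi}_n(z)=\Phi_{j_n(z)}(z)$; as $(\calq,\zeta)$ is p.m.p.\ and $\psi_n\in[\calq]$, the map $\bar{\psi}_n$ is a $\zeta$-preserving Borel bijection of $Z$. Pulling $j_n$ back along $\theta$, I partition $\calg^0=\bigsqcup_{j\in J}A_j$ into the $\cals$-invariant Borel sets $A_j\coloneqq\theta^{-1}(j_n^{-1}(j))$. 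I then set $\phi_n^{0}\coloneqq\bigcup_{j\in J}A_j\phi_j$, i.e.\ $x\phi_n^{0}=x\phi_{j_n(\theta(x))}$. This is a Borel $r$-section with $r(\phi_n^{0})=\calg^0$, and the identity $\theta(x\phi_j)=(\theta(x),j)$ coming from the construction of $\theta$ yields $\theta(x\phi_n^{0})=(\theta(x),j_n(\theta(x)))=\theta(x)\psi_n$ for almost every $x$. Consequently $\theta^{-1}(\psi_n)=\phi_n^{0}\cals$: indeed $g\in\theta^{-1}(\psi_n)$ iff $\theta(g)=\theta(r(g))\psi_n=\theta(r(g)\phi_n^{0})$, i.e.\ iff $(r(g)\phi_n^{0})^{-1}g\in\ker\theta=\cals$. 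Applying the same coordinatization to $\psi_n^{-1}=\psi_m$ and taking inverses (using $\theta(g^{-1})=\theta(g)^{-1}$) shows that $\theta^{-1}(\psi_n)$ has full source as well.

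It then remains to extract from $B\coloneqq\theta^{-1}(\psi_n)$ a Borel bisection $\phi_n$ with $r(\phi_n)=s(\phi_n)=\calg^0$; since any $\phi_n\subset B$ automatically satisfies $\theta(x\phi_n)=\theta(x)\psi_n$, producing such a $\phi_n$ proves the claim. Here the essential point is that $\bar{\phi}_n^{0}$ need not be injective (the choice functions $\phi_j$ are only $r$-sections), so $\phi_n^{0}$ itself is generally not a bisection. Via the identification $B\cong\cals$ given by $g\mapsto(r(g)\phi_n^{0})^{-1}g$, the source of $g$ equals the source of the corresponding $\cals$-arrow, so finding $\phi_n$ amounts to choosing, for almost every $x$, a point $T(x)\sim_\cals\bar{\phi}_n^{0}(x)$ so that $T\colon\calg^0\to\calg^0$ is a Borel bijection; once $T$ is found, a single $\cals$-arrow from $\bar{\phi}_n^{0}(x)$ to $T(x)$ may be selected measurably by the Lusin–Novikov uniformization, and this defines $\phi_n$.

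The main obstacle is precisely the existence of such a bijection $T$, i.e.\ a measurable perfect matching inside $B$, and this is where the p.m.p.\ hypothesis enters essentially. I would obtain $T$ by the measurable marriage argument used by Feldman–Sutherland–Zimmer in the principal case: the measure-preservation of bisections of $(\calg,\mu)$ forces the Hall-type balance $\mu\bigl(s(B\cap r^{-1}(D))\bigr)\ge\mu(D)$ for Borel $D\subset\calg^0$, which is exactly what is needed to complete a maximal partial matching to a full one modulo null sets. Verifying this balance and the resulting a.e.\ surjectivity is the hard step; the only new feature relative to \cite{fsz} is that $\cals$ may carry several arrows between two units, which is harmless since we use only one arrow per matched pair. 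Granting this, $\phi_n\in[\calg]$ with $\theta(x\phi_n)=\theta(x)\psi_n$, as claimed.
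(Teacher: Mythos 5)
Your first two paragraphs are correct and match the paper's setup: the paper likewise starts from a Borel $r$-section $\phi_n\subset\theta^{-1}(\psi_n)$ with $r(\phi_n)=\calg^0$ (your $\phi_n^0$ is an explicit such section, and the identity $\theta(x\phi_j)=(\theta(x),j)$ you invoke is established in the construction of $\theta$), and the whole content of the claim is indeed the repair of this $r$-section into a bisection by post-composing with arrows of $\cals$. The problem is that you then declare the existence of the required matching $T$ to be ``the hard step'' and proceed by ``Granting this''. That step \emph{is} the claim; everything before it is bookkeeping. Moreover, the justification you sketch for it is not sound as stated: a Hall-type inequality such as $\mu\bigl(s(B\cap r^{-1}(D))\bigr)\ge\mu(D)$ does not in general yield a measurable perfect matching, even modulo null sets (measurable marriage theorems fail without additional structure), and your inequality only sees $\cals$-saturations --- note $s(B\cap r^{-1}(D))=[\bar{\phi}_n^0(D)]_\cals$, so it cannot detect how the countably many sheets of $\bar{\phi}_n^0$ pile up inside a single $\cals$-class, which is exactly the obstruction to injectivity.

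What the paper actually uses, and what is missing from your proposal, is the disintegration $\mu=\int_Z\mu_z\,d\zeta(z)$ over the ergodic decomposition of $\cals$ together with two facts: (a) the exact fiberwise mass balance $\mu_{\bar{\psi}_n(z)}(\bar{\phi}_n(E_k))=\mu_z(E_k)$ for a partition $\calg^0=\bigsqcup_kE_k$ on whose pieces $\bar{\phi}_n$ is injective --- this is where the p.m.p.\ hypothesis on $(\calq,\zeta)$ enters, via $\zeta$-preservation of $\bar{\psi}_n$ --- so that $\sum_k\mu_{\bar{\psi}_n(z)}(\bar{\phi}_n(E_k))=1$, i.e.\ the images fill each target ergodic fiber with total multiplicity exactly one; and (b) ergodicity of $(\cals,\mu_w)$, which guarantees that any nonnull overlap has conull saturation in its fiber and hence can be pushed by elements $\eta_m\in[\cals]$ into the unoccupied region $\calg^0\setminus(A_1\cup A_2)$, whose fiber measure dominates that of the overlap by (a). The paper then runs an explicit inductive exhaustion over the $E_k$, producing $\xi_n\in[\calg]$ with $x\xi_n=x\phi_n\eta_m\sim_\cals x\phi_n$. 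To complete your proof you would need to supply this (or an equivalent) argument; the appeal to a generic measurable marriage principle does not substitute for it.
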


\begin{proof}
Fix $n$ and choose a Borel $r$-section $\phi_n\subset \theta^{-1}(\psi_n)$ of $\calg$ with $r(\phi_n)=\calg^0$.
Take a countable Borel partition $\calg^0=\bigsqcup_kE_k$ such that $\bar{\phi}_n$ is injective on each $E_k$.
Let $\mu =\int_Z\mu_z\, d\zeta(z)$ be the disintegration with respect to the ergodic decomposition map $\theta$.
%We set $\bar{\phi}_n=s\circ \phi_n$ and $\bar{\psi}_n=f_{\psi_n}$.
The maps $\bar{\phi}_n\colon E_k\to \bar{\phi}_n(E_k)$ and $\bar{\psi}_n\colon Z\to Z$ are measure-preserving isomorphisms. 
For every $x\in \calg^0$, we have $\theta(x\phi_n)\in \psi_n$ with the range $\theta(x)$ and hence $\theta(x\phi_n)=\theta(x)\psi_n$.
It follows that $\theta \circ \bar{\phi}_n=\bar{\psi}_n\circ \theta \colon \calg^0\to Z$.

For every $k$ and almost every $z\in Z$, we have
\begin{equation}\label{eq-bar-psi-bar-phi}
\mu_{\bar{\psi}_n(z)}(\bar{\phi}_n(E_k))=\mu_z(E_k)
\end{equation}
since for every Borel subset $A\subset Z$, we have
\begin{align*}
&\int_A\mu_{\bar{\psi}_n(z)}(\bar{\phi}_n(E_k))\, d\zeta(z)=\int_{\bar{\psi}_n(A)}\mu_w(\bar{\phi}_n(E_k))\, d\zeta (w)=\mu(\theta^{-1}(\bar{\psi}_n(A))\cap \bar{\phi}_n(E_k))\\
&=\mu(\bar{\phi}_n(\theta^{-1}(A)\cap E_k))=\mu(\theta^{-1}(A)\cap E_k)=\int_A\mu_z(E_k)\, d\zeta (z).
\end{align*}
The first equation holds because $(\calq, \zeta)$ is p.m.p.
For the third equation, we in fact have the equation
\[\theta^{-1}(\bar{\psi}_n(A))\cap \bar{\phi}_n(E_k)=\bar{\phi}_n(\theta^{-1}(A)\cap E_k)\]
as verified as follows:
If $x\in \theta^{-1}(A)\cap E_k$, then $\theta(\bar{\phi}_n(x))=\bar{\psi}_n(\theta(x))\in \bar{\psi}_n(A)$ and hence $\bar{\phi}_n(x)\in \theta^{-1}(\bar{\psi}_n(A))$.
Conversely if $y\in \theta^{-1}(\bar{\psi}_n(A))\cap \bar{\phi}_n(E_k)$, then we have $x\in E_k$ such that $y=\bar{\phi}_n(x)$ and hence $\theta(y)=\theta(\bar{\phi}_n(x))=\bar{\psi}_n(\theta(x))$.
Since $\theta(y)\in \bar{\psi}_n(A)$ and the map $\bar{\psi}_n\colon Z\to Z$ is an isomorphism, we have $\theta(x)\in A$ and $y=\bar{\phi}_n(x)$ with $x\in \theta^{-1}(A)\cap E_k$.

It follows from equation (\ref{eq-bar-psi-bar-phi}) that for almost every $z\in Z$, 
\begin{equation}\label{eq-sum-k}
\sum_k\mu_{\bar{\psi}_n(z)}(\bar{\phi}_n(E_k))=1.
\end{equation}
By ergodicity of $(\cals, \mu_w)$, we have $\mu_{\bar{\psi}_n(z)}([\bar{\phi}_n(E_k)]_\cals)=1$ if $\mu_{\bar{\psi}_n(z)}(\bar{\phi}_n(E_k))>0$.
Using this, we find a countable Borel partition $\calg^0=\bigsqcup_l E_l'$ finer than $\calg^0=\bigsqcup_k E_k$ and find $\eta_l'\in \llbracket \cals \rrbracket$ such that the $r$-section $\xi_n\coloneqq \bigsqcup_l \phi_n\eta_l'$ belongs to $[ \calg ]$.
This $\xi_n$ is a desired replacement of $\phi_n$ in the claim.

We will define $x\xi_n\in x\calg$ for each $x\in \calg^0$. 
Choose a sequence $(\eta_m)_{m\in \N}$ of $\eta_m\in [\cals]$ such that $\cals =\bigcup_{m\in \N}\eta_m$.
We set $x\xi_n=x\phi_n$ for $x\in E_1$.
Define the set $F_2\subset E_2$ by the equation $\bar{\phi}_n(F_2)=\bar{\phi}_n(E_1)\cap \bar{\phi}_n(E_2)$.
We will find a decomposition $F_2=\bigsqcup_m F_{2, m}$ such that $(\bar{\eta}_m(\bar{\phi}_n(F_{2, m})))_m$ is mutually disjoint and every $\bar{\eta}_m(\bar{\phi}_n(F_{2, m}))$ is disjoint from the union $\bar{\phi}_n(E_1)\cup \bar{\phi}_n(E_2)$.

We may assume that $F_2$ is nonnull.
Put $A_1=\bar{\phi}_n(E_1)$ and $A_2=\bar{\phi}_n(E_2)$.
By equation (\ref{eq-sum-k}), we have
\[\mu_{\bar{\psi}_n(z)}(\calg^0\setminus (A_1\cup A_2))\geq \mu_{\bar{\psi}_n(z)}(A_1\cap A_2)\]
for almost every $z\in Z$.
Since $\mu_{\bar{\psi}_n(z)}([A_1\cap A_2]_\cals)=1$ if $\mu_{\bar{\psi}_n(z)}(A_1\cap A_2)>0$, we obtain a decomposition $A_1\cap A_2=\bigsqcup_m D_m$ such that $(\bar{\eta}_m(D_m))_m$ is mutually disjoint and every $\bar{\eta}_m(D_m)$ is disjoint from $A_1\cup A_2$.
More precisely we obtain the sets $D_m$ as follows:
Define $D_1$ by $\bar{\eta}_1(D_1)=\bar{\eta}_1(A_1\cap A_2)\setminus (A_1\cup A_2)$.
For $m\geq 2$, define $D_m$ inductively by
\[\bar{\eta}_m(D_m)=\bar{\eta}_m(A_1\cap A_2)\setminus (A_1\cup A_2\cup \bar{\eta}_1(D_1)\cup \cdots \cup \bar{\eta}_{m-1}(D_{m-1})).\]
Via the isomorphism $\bar{\phi}_n\colon F_2\to A_1\cap A_2$, we obtain the decomposition $F_2=\bigsqcup_m F_{2, m}$ such that $\bar{\phi}_n(F_{2, m})=D_m$.
This is a desired one.

We set $x\xi_n =x\phi_n \eta_m$ for $x\in F_{2, m}$ and $m\in \N$, and set $x\xi_n=x\phi_n$ for $x\in E_2\setminus F_2$.
We defined $x\xi_n$ for $x\in E_1\cup E_2$.
The map $\bar{\xi}_n\colon E_1\cup E_2\to \calg^0$, $x\mapsto s(x\xi_n)$ is then injective.

We next define $x\xi_n$ for $x\in E_3$ in a similar manner as follows.
Define the set $F_3\subset E_3$ by the equation $\bar{\phi}_n(F_3)=\bar{\xi}_n(E_1\cup E_2)\cap \bar{\phi}_n(E_3)$, and put $A_{12}=\bar{\xi}_n(E_1\cup E_2)$ and $A_3=\bar{\phi}_n(E_3)$.
Apply the same procedure as above to $A_{12}$ and $A_3$ in place of $A_1$ and $A_2$.
We then obtain a decomposition $A_{12}\cap A_3=\bigsqcup_m D_m'$ such that $(\bar{\eta}_m(D_m'))_m$ is mutually disjoint and every $\bar{\eta}_m(D_m')$ is disjoint from $A_{12}\cup A_3$.
Let $F_3=\bigsqcup_m F_{3, m}$ be the decomposition obtained by sending this decomposition via the isomorphism $\bar{\phi}_n \colon F_3\to A_{12}\cap A_3$.
We set $x\xi_n=x\phi_n\eta_m$ for $x\in F_{3, m}$ and $m\in \N$, and set $x\xi_n =x\phi_n$ for $x\in E_3\setminus F_3$.
Then $x\xi_n$ is defined for all $x\in E_1\cup E_2\cup E_3$, and the induced map $\bar{\xi}_n\colon E_1\cup E_2\cup E_3\to \calg^0$ is injective.

Repeating this construction, we obtain a decomposition $\calg^0=\bigsqcup_kE_k =\bigsqcup_m F_m'\sqcup F'$ into Borel subsets such that if we set $x\xi_n =x\phi_n \eta_m$ for $x\in F_m'$ and set $x\xi_n=x\phi_n$ for $x\in F'$, then the induced map $\bar{\xi}_n\colon \calg^0\to \calg^0$ is injective.
Thus $\xi_n\in [\calg]$. 
\end{proof}

Going back to the proof of Proposition \ref{prop-pmp} (1) $\Rightarrow$ (2), we show that the sequence $(\phi_n)$ in Claim \ref{claim-phi_n} is a desired one.
Since $\calq =\bigcup_n\psi_n$, the set $\{ \, x\phi_n\mid x\in r(\phi_n),\, n\in \N \, \}$ intersects every $\sim_\cals$-equivalence class in $x\calg$ for almost every $x\in \calg^0$.
For $\gamma \in \cals$, we have $\theta(\phi_n^{-1}\gamma \phi_n)=\psi_n^{-1}\theta(\gamma)\psi_n$, which is a unit.
Therefore $V_{\phi_n}(\cals )\subset \cals$ for every $n$.

We show the equation $V_{\phi_n}(\cals )= \cals$ for every $n$.
Fix $n$.
By the condition we first imposed on $(\psi_n)_n$, there exists $m$ such that $\psi_n^{-1}=\psi_m$.
Then $V_{\phi_m}V_{\phi_n}(\cals)\subset V_{\phi_m}(\cals)\subset \cals$.
Since $\phi_n\phi_m\subset \cals$, we have $V_{\phi_m}V_{\phi_n}(\cals)=\cals$ and hence $V_{\phi_n}(\cals)=\cals$.
Thus $\phi_n \in \aut_\calg(\cals)$.
\end{proof}

%%%%%%%%%%%%%%%%%%%%%%%%%%%%%%%%%%%

\subsection{Useful criteria for normality}

Let $(\calg, \mu)$ be a discrete measured groupoid and let $\cals <\calg$ be a Borel subgroupoid.
For a Borel subset $D\subset \calg^0$, we define $\End_\calg(\cals)|_D$ as the set of Borel $r$-sections $\phi$ of $\calg$ such that $r(\phi)=D$ and $\phi^{-1}\gamma \phi \in \cals$ for every $\gamma \in \cals|_{D}$.
If $\phi \in \llbracket \calg \rrbracket_\cals$, then $\phi \in \End_\calg(\cals)|_{r(\phi)}$.

\begin{lem}\label{lem-phi-ext}
Let $(\calg, \mu)$ be a discrete measured groupoid and let $\cals <\calg$ be a Borel subgroupoid.
Let $D\subset \calg^0$ be a Borel subset.
Then every element of $\End_\calg(\cals)|_D$ extends to an element of $\End_\calg(\cals)$. 
\end{lem}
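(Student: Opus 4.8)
The plan is to build the extension $\phi'$ by treating the $\cals$-saturation $[D]_\cals$ of $D$ and its complement separately. Both sets are Borel and $\cals$-invariant: indeed $[D]_\cals=s(D\cals)$ is Borel by the Lusin--Novikov uniformization theorem (as $\cals$ has countable range fibres), and the complement of a $\cals$-invariant set is automatically $\cals$-invariant. The point of this splitting is that the defining condition of $\End_\calg(\cals)$ decouples along it: every $\gamma\in\cals$ satisfies $r(\gamma)\sim_\cals s(\gamma)$, so $r(\gamma)$ and $s(\gamma)$ always lie in the same one of the two invariant pieces, and it suffices to secure $V_{\phi'}(\gamma)\in\cals$ for $\gamma$ supported in each piece separately. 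On $\calg^0\setminus[D]_\cals$ I would simply let $x\phi'$ be the unit at $x$; then $V_{\phi'}(\gamma)=\gamma\in\cals$ for every $\gamma\in\cals$ with range in that piece, so nothing is required of the hypothesis there.

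The real content is the extension over $[D]_\cals$. Using that $\cals$ has countable fibres, I would apply Lusin--Novikov uniformization to the Borel set $\{\,(x,\gamma)\mid x\in[D]_\cals,\ \gamma\in\cals,\ s(\gamma)=x,\ r(\gamma)\in D\,\}$ to obtain a Borel map $x\mapsto\gamma_x$ with $s(\gamma_x)=x$ and $r(\gamma_x)\in D$, arranged so that $\gamma_x$ is the unit at $x$ for $x\in D$ (handled by defining the selection piecewise on $D$ and on $[D]_\cals\setminus D$). Writing $y_x:=r(\gamma_x)\in D$, I define $x\phi':=\gamma_x^{-1}\,(y_x\phi)$, an element of range $x$. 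This is a Borel $r$-section with $r(\phi')=[D]_\cals$, and for $x\in D$ it reduces to $x\phi'=x\phi$, so $\phi'$ genuinely extends $\phi$ rather than merely its conjugation action.

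For the conjugation condition over $[D]_\cals$, take $\gamma\in\cals$ with $x_1:=r(\gamma)$ and $x_2:=s(\gamma)$ both in $[D]_\cals$. Substituting the definitions and cancelling each $\gamma_{x_i}$ against $\gamma_{x_i}^{-1}$ gives
\[
V_{\phi'}(\gamma)=(x_1\phi')^{-1}\gamma(x_2\phi')=(y_{x_1}\phi)^{-1}\,\bigl(\gamma_{x_1}\gamma\gamma_{x_2}^{-1}\bigr)\,(y_{x_2}\phi).
\]
Here $\delta:=\gamma_{x_1}\gamma\gamma_{x_2}^{-1}$ lies in $\cals$ with range $y_{x_1}\in D$ and source $y_{x_2}\in D$, hence $\delta\in\cals|_D$; and the right-hand side is precisely $V_\phi(\delta)=\phi^{-1}\delta\phi$. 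Since $\phi\in\End_\calg(\cals)|_D$, this yields $V_{\phi'}(\gamma)=V_\phi(\delta)\in\cals$. Combining the two invariant pieces, the resulting Borel $r$-section $\phi'$ has $r(\phi')=\calg^0$, lies in $\End_\calg(\cals)$, and extends $\phi$.

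I expect the only delicate points to be the measurable selection $x\mapsto\gamma_x$ (and forcing it to be the unit section on $D$) together with the careful bookkeeping of ranges and sources; the algebraic identity reducing $V_{\phi'}(\gamma)$ to $V_\phi(\delta)$ is just a one-line cancellation once the conventions are pinned down.
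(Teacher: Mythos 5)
Your proof is correct and follows essentially the same route as the paper: the paper also splits $\calg^0$ into $[D]_\cals$ and its complement, takes units on the complement, and on $[D]_\cals$ chooses a Borel $r$-section $f\subset\cals$ with $r(f)=[D]_\cals$, $s(f)\subset D$ and $xf=x$ on $D$ (your $\gamma_x^{-1}$), setting $\tilde{\phi}=\{\,xf\phi\mid x\in[D]_\cals\,\}\cup(\calg^0\setminus[D]_\cals)$ so that the conjugation identity $V_{\tilde{\phi}}(\gamma)=V_\phi(f^{-1}\gamma f)$ with $f^{-1}\gamma f\in\cals|_D$ finishes the argument. Your extra care with the Lusin--Novikov selection and the range/source bookkeeping matches what the paper leaves implicit.
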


\begin{proof}
Pick $\phi \in \End_\calg(\cals)|_D$ and put $E=[D]_\cals$.
Choose a Borel $r$-section $f\subset \cals$ such that $r(f)=E$, $xf=x$ for every $x\in D$, and $s(xf)\in D$ for every $x\in E$. 
We define an $r$-section $\tilde{\phi}$ of $\calg$ by $\tilde{\phi}=\{ \, xf\phi \mid x\in E\, \} \cup (\calg^0\setminus E)$.
Then $\tilde{\phi}$ extends $\phi$.
We check that $\tilde{\phi}$ belongs to $\End_\calg(\cals)$.
Pick $\gamma \in \cals$.
Since $E$ is $\cals$-invariant, we have either $\gamma \in \cals|_E$ or $\gamma \in \cals|_{\calg^0\setminus E}$.
If $\gamma \in \cals|_E$, then $\tilde{\phi}^{-1}\gamma \tilde{\phi}=\phi^{-1}f^{-1}\gamma f\phi$, which belongs to $\cals$ because $f^{-1}\gamma f\in \cals$.
If $\gamma \in \cals|_{\calg^0\setminus E}$, then $\tilde{\phi}^{-1}\gamma \tilde{\phi}=\gamma \in \cals$.
\end{proof}

\begin{lem}\label{lem-cri-nor}
Let $(\calg, \mu)$ be a discrete measured groupoid and let $\cals <\calg$ be a Borel subgroupoid.
Assume that there exists a sequence $(\phi_n)$ such that for every $n$, $\phi_n\in \End_\calg(\cals)|_{D_n}$ for some Borel subset $D_n\subset \calg^0$ and $\calg =\bigcup_n \phi_n$.
Then $\cals \vartriangleleft \calg$.
\end{lem}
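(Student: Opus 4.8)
The plan is to promote the partial sections $\phi_n$ to genuine choice functions lying in $\End_\calg(\cals)$. As in the proof of Theorem \ref{thm-quotient}, since the index function for $\cals<\calg$ is Borel and $\calg$-invariant, we may assume it is constant; fix a countable set $J$ of that cardinality. First I would apply Lemma \ref{lem-phi-ext} to extend each $\phi_n$ to an element $\tilde{\phi}_n\in\End_\calg(\cals)$ with $r(\tilde{\phi}_n)=\calg^0$. Since $\phi_n\subset\tilde{\phi}_n$ and $\calg=\bigcup_n\phi_n$, we still have $\calg=\bigcup_n\tilde{\phi}_n$; in particular, for almost every $x$ every element of $x\calg$, and hence every $\sim_\cals$-class in $x\calg$, is realized as $x\tilde{\phi}_n$ for some $n$.

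The key point is an equivariance property of the labelling $n\mapsto x\tilde{\phi}_n$. Given $\gamma\in\cals$ with $r(\gamma)=y$ and $s(\gamma)=x$, the assignment $g\mapsto\gamma g$ induces a bijection between the $\sim_\cals$-classes in $x\calg$ and those in $y\calg$. Because $\tilde{\phi}_n\in\End_\calg(\cals)$, the conjugate $V_{\tilde{\phi}_n}(\gamma)=(y\tilde{\phi}_n)^{-1}\gamma(x\tilde{\phi}_n)$ lies in $\cals$, which says exactly that $\gamma(x\tilde{\phi}_n)\sim_\cals y\tilde{\phi}_n$; thus this class-bijection carries the class of $x\tilde{\phi}_n$ to the class of $y\tilde{\phi}_n$ for every $n$. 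Now set
\[
D_n^\ast=\{\,x\in\calg^0 \mid x\tilde{\phi}_n\not\sim_\cals x\tilde{\phi}_m\ \text{for all}\ m<n\,\},
\]
so that $n$ is the first index whose section meets the class of $x\tilde{\phi}_n$. Using the equivariance above together with the injectivity of the class-bijection, one checks that $D_n^\ast$ is $\cals$-invariant: if $x\in D_n^\ast$ and $\gamma\in\cals$ joins $x$ to $y$, then $y\tilde{\phi}_n\sim_\cals y\tilde{\phi}_m$ would force $x\tilde{\phi}_n\sim_\cals x\tilde{\phi}_m$, a contradiction.

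For almost every $x$ the set $\{\,n\mid x\in D_n^\ast\,\}$ has exactly $|J|$ elements, and $\{\,x\tilde{\phi}_n\mid x\in D_n^\ast\,\}$ meets each $\sim_\cals$-class in $x\calg$ exactly once. Since each $D_n^\ast$ is $\cals$-invariant, this active index set is constant along $\cals$-orbits, so enumerating it as $n_1(x)<n_2(x)<\cdots$ yields $\cals$-invariant Borel maps $x\mapsto n_j(x)$ for $j\in J$. I would then define the full $r$-sections $\sigma_j$ by $x\sigma_j=x\tilde{\phi}_{n_j(x)}$. By construction $(\sigma_j)_{j\in J}$ hits every $\sim_\cals$-class in each fibre exactly once, hence is a family of choice functions for $\cals<\calg$. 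Finally, each $\sigma_j$ lies in $\End_\calg(\cals)$: for $\gamma\in\cals$ joining $x$ to $y$ we have $n_j(x)=n_j(y)=:n$ by $\cals$-invariance, so $V_{\sigma_j}(\gamma)=(y\tilde{\phi}_n)^{-1}\gamma(x\tilde{\phi}_n)=V_{\tilde{\phi}_n}(\gamma)\in\cals$. This produces a family of choice functions contained in $\End_\calg(\cals)$, which is exactly the assertion $\cals\vartriangleleft\calg$.

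The crux is the middle step: the naive move of disjointifying or extending the $\phi_n$ need not preserve membership in $\End_\calg(\cals)$, and the $\cals$-invariance of $D_n^\ast$ (equivalently, of the active-index enumeration) is what is really needed to guarantee that the reorganised sections $\sigma_j$ fall back inside $\End_\calg(\cals)$. I expect this equivariance/invariance verification, rather than the measurability bookkeeping, to be the main obstacle.
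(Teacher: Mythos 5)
Your proposal is correct and follows essentially the same route as the paper: extend each $\phi_n$ to an element of $\End_\calg(\cals)$ via Lemma \ref{lem-phi-ext}, then build full choice functions by always selecting the first index hitting each $\sim_\cals$-class, using the $\cals$-invariance of the sets $\{\,x \mid x\tilde{\phi}_n \sim_\cals x\tilde{\phi}_m\,\}$ (Lemma \ref{lem-s-inv}~(i)) to see that the reassembled sections remain in $\End_\calg(\cals)$. The paper phrases the selection as an inductive definition of the $\psi_n$ rather than via your sets $D_n^\ast$, but the two constructions coincide.
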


\begin{proof}
We may assume that the index function for $\cals <\calg$ is constant, and the constant is denoted by $N\in \N \cup \{ \infty \}$.
By Lemma \ref{lem-phi-ext}, we extend each $\phi_n$ to an element of $\End_\calg(\cals)$, which is denoted by the same symbol $\phi_n$.
We define an $r$-section $\psi_n$ of $\calg$ with $r(\psi_n)=\calg^0$ inductively as follows:
First we set $\psi_1=\phi_1$.
Suppose that $\psi_1,\ldots, \psi_{n-1}$ are defined.
For $x\in \calg^0$, we set $x\psi_n=x\phi_k$, where
\[k\coloneqq \min \{ \, l\in \N \mid x\phi_l\not\in [x\psi_1]\cup \cdots \cup [x\psi_{n-1}]\, \},\]
and for $\gamma \in x\calg$, $[\gamma ]$ denotes the $\sim_\cals$-equivalence class in $x\calg$ containing $\gamma$.
Then the family $(\psi_n)_{n=1}^N$ is defined and is a family of choice functions for $\cals <\calg$.

We show $\psi_n\in \End_\calg(\cals)$ for every $n$ by induction.
We have $\psi_1=\phi_1\in \End_\calg(\cals)$.
Next assume $\psi_1,\ldots, \psi_{n-1}\in \End_\calg(\cals)$.
For every $l\in \N$, the set
\[\{ \, x\in \calg^0\mid x\phi_l\not\in [x\psi_1]\cup \cdots \cup [x\psi_{n-1}]\, \}\]
is $\cals$-invariant.
Indeed by Lemma \ref{lem-s-inv} (i), the set $\{ \, x\in \calg^0\mid x\phi_l\not\in [x\psi_i]\, \}$ is $\cals$-invariant for every $i=1,\ldots, n-1$.
Therefore there exists a decomposition $\calg^0=\bigsqcup_{l\in \N}D_l$ such that every $D_l$ is an $\cals$-invariant Borel subset and $x\psi_n =x\phi_l$ for all $x\in D_l$.
Since $\phi_l\in \End_\calg(\cals)$ and $D_l$ is $\cals$-invariant, we conclude that $\psi_n\in \End_\calg(\cals)$.
\end{proof}

\begin{cor}\label{cor-cri-nor-gen}
Let $(\calg, \mu)$ be a discrete measured groupoid and let $\cals <\calg$ be a Borel subgroupoid.
Assume that there exists a sequence $(\phi_n)$ of $\phi_n\in \llbracket \calg \rrbracket_\cals$ such that $(\phi_n)$ and $\cals$ generate $\calg$. 
Then $\cals \vartriangleleft \calg$.
\end{cor}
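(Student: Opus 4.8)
The plan is to reduce the assertion to Lemma \ref{lem-cri-nor} by producing, out of the generators, a countable family of $r$-sections each lying in some $\End_\calg(\cals)|_D$ and together covering $\calg$. The key point is that $\llbracket \calg \rrbracket_\cals$ is an inverse semigroup of bisections which is contained in $\bigcup_D\End_\calg(\cals)|_D$, so any countable family of bisections built from the given data inside $\llbracket \calg \rrbracket_\cals$ feeds directly into Lemma \ref{lem-cri-nor}.

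First I would record the relevant closure properties. It is already noted that $\llbracket \calg \rrbracket_\cals$ is closed under products, and that $\phi \in \llbracket \calg \rrbracket_\cals$ implies $\phi \in \End_\calg(\cals)|_{r(\phi)}$. I would add two observations. (a) The set $\llbracket \calg \rrbracket_\cals$ is closed under inverses: the defining relation $V_\phi(\cals|_{r(\phi)})=\cals|_{s(\phi)}$ is an equality, so conjugating both sides by $\phi$ gives $\cals|_{r(\phi)}=\phi\,(\cals|_{s(\phi)})\,\phi^{-1}=V_{\phi^{-1}}(\cals|_{r(\phi^{-1})})$, i.e.\ $\phi^{-1}\in \llbracket \calg \rrbracket_\cals$. (b) Every Borel bisection $\eta$ with $\eta\subset \cals$ lies in $\llbracket \calg \rrbracket_\cals$: for $\gamma \in \cals|_{r(\eta)}$ we have $\eta^{-1}\gamma \eta \in \cals|_{s(\eta)}$ since $\cals$ is a subgroupoid, so $V_\eta(\cals|_{r(\eta)})\subset \cals|_{s(\eta)}$, and applying the same to $\eta^{-1}$ yields the reverse inclusion.

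Next I would make the hypothesis ``$(\phi_n)$ and $\cals$ generate $\calg$'' explicit as a covering statement. Choose a countable family of Borel bisections $(\eta_k)$ with $\cals=\bigcup_k \eta_k$; by (b) each $\eta_k\in \llbracket \calg \rrbracket_\cals$. Let $\mathcal{W}$ be the (countable) set of all finite products of elements of $\{\,\phi_n,\phi_n^{-1}\mid n\,\}\cup\{\,\eta_k\mid k\,\}$. By closure under products and inverses, every member of $\mathcal{W}$ lies in $\llbracket \calg \rrbracket_\cals$. Moreover $\calg=\bigcup_{\psi\in \mathcal{W}}\psi$ up to a null set: any $\gamma\in \calg$ can be written as a finite product of the $\phi_n^{\pm 1}$ and elements of $\cals$, and replacing each $\cals$-factor by a bisection $\eta_k$ containing it exhibits $\gamma$ inside a single such product.

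Finally I would enumerate $\mathcal{W}=\{\,\psi_m\mid m\in \N\,\}$ and put $D_m=r(\psi_m)$. Since $\psi_m\in \llbracket \calg \rrbracket_\cals\subset \End_\calg(\cals)|_{D_m}$ and $\calg=\bigcup_m \psi_m$, Lemma \ref{lem-cri-nor} applies and gives $\cals \vartriangleleft \calg$. The only genuine work is the bookkeeping in the previous paragraph, turning the abstract notion of generation into an honest countable union of bisections; the closure verifications (a), (b) and the invocation of Lemma \ref{lem-cri-nor} are then routine.
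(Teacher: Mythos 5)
Your proposal is correct and follows essentially the same route as the paper: the paper's proof likewise closes the generating family under inverses, writes $\calg$ as a countable union of words over the $\phi_n^{\pm 1}$ and elements of $\llbracket \cals \rrbracket$, notes that these words lie in $\llbracket \calg \rrbracket_\cals$, and invokes Lemma \ref{lem-cri-nor}. Your observations (a) and (b) simply make explicit the closure properties that the paper leaves implicit.
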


\begin{proof}
We may assume that the sequence $(\phi_n)$ contains the inverse of every $\phi_n$.
Then $\calg$ is written as the union of countably many words over $(\phi_n)$ and elements of $\llbracket \cals \rrbracket$.
Those words belong to $\llbracket \calg \rrbracket_\cals$, and the lemma follows from Lemma \ref{lem-cri-nor}.
\end{proof}

\begin{cor}\label{cor-shg}
Let $(\calg, \mu)$ be a discrete measured groupoid.
If $\cals <\calh <\calg$ are Borel subgroupoids of $\calg$ such that $\cals \vartriangleleft \calg$, then $\cals \vartriangleleft \calh$.
\end{cor}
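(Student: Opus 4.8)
The plan is to reduce the statement to the normality criterion of Lemma \ref{lem-cri-nor}, applied to the pair $\cals < \calh$. First I note that $(\calh, \mu)$ is itself a discrete measured groupoid: every Borel bisection of $\calh$ is a Borel bisection of $\calg$, so the quasi-invariance criterion recorded for $(\calg, \mu)$ restricts to give $\mu_\calh^r \sim \mu_\calh^s$. By the hypothesis $\cals \vartriangleleft \calg$, I may fix a family $(\phi_n)$ of choice functions for $\cals < \calg$ with every $\phi_n \in \End_\calg(\cals)$.

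Next I would carve out of each $\phi_n$ the part landing inside $\calh$. Put $D_n \coloneqq \{\, x \in \calg^0 : x\phi_n \in \calh \,\}$; this is Borel, being the range of the Borel $r$-section $\phi_n \cap \calh$ (equivalently, the preimage of the Borel set $\calh$ under the Borel map $x \mapsto x\phi_n$). Let $\psi_n \coloneqq \{\, x\phi_n : x \in D_n \,\}$ be the restriction of $\phi_n$ to $D_n$, an $r$-section of $\calh$ with $r(\psi_n) = D_n$. For $\gamma \in \cals|_{D_n}$ the conjugate $\psi_n^{-1}\gamma\psi_n$ coincides with $\phi_n^{-1}\gamma\phi_n \in \cals$, so $\psi_n \in \End_\calh(\cals)|_{D_n}$.

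The key step is to see that $(\psi_n)$ together with $\cals$ generate $\calh$; in fact $\calh = \bigcup_n \psi_n\cals$. Indeed, for $h \in \calh$ with $x = r(h)$ there is a unique $n$ with $h \sim_\cals x\phi_n$, i.e.\ $\gamma \coloneqq (x\phi_n)^{-1}h \in \cals$; then $x\phi_n = h\gamma^{-1} \in \calh$ since $\cals < \calh$, so $x \in D_n$ and $h = (x\psi_n)\gamma \in \psi_n\cals$. Writing $\cals = \bigcup_m \eta_m$ with $\eta_m \in \llbracket \cals \rrbracket$, each $\eta_m$ lies in $\End_\calh(\cals)|_{r(\eta_m)}$, and a direct conjugation computation (in the spirit of Lemma \ref{lem-s-inv}(ii)) shows $\psi_n\eta_m \in \End_\calh(\cals)|_{r(\psi_n\eta_m)}$, since for $\gamma \in \cals|_{r(\psi_n\eta_m)}$ one has $(\psi_n\eta_m)^{-1}\gamma(\psi_n\eta_m) = \eta_m^{-1}(\psi_n^{-1}\gamma\psi_n)\eta_m \in \cals$. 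As $\calh = \bigcup_{n,m}\psi_n\eta_m$, Lemma \ref{lem-cri-nor} applied to $(\calh, \mu)$ and $\cals < \calh$ yields $\cals \vartriangleleft \calh$.

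I expect the main obstacle to be pinning down the generation claim $\calh = \bigcup_n \psi_n\cals$, and in particular the observation that whenever a coset representative $x\phi_n$ is $\sim_\cals$-equivalent to an element of $\calh$, the representative itself already lies in $\calh$ (because $\cals < \calh$), so that the restrictions $\psi_n$ behave correctly over each $x$. The remaining verifications---Borelness of $D_n$, membership in $\End_\calh(\cals)|_D$, the product computation for $\psi_n\eta_m$, and the inherited measured structure on $\calh$---are routine bookkeeping once this is in place.
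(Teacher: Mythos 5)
Your proposal is correct and follows essentially the same route as the paper: take choice functions $\phi_n\in\End_\calg(\cals)$, write $\cals=\bigcup_m\eta_m$, observe that the portions of the products $\phi_n\eta_m$ lying in $\calh$ cover $\calh$ and belong to $\End_\calh(\cals)|_{D}$ for suitable Borel $D$, and invoke Lemma \ref{lem-cri-nor}. The only (immaterial) difference is that you restrict $\phi_n$ to $\calh$ before multiplying by $\eta_m$, whereas the paper intersects $\phi_n\eta_m$ with $\calh$ afterwards; your explicit verification that $h\sim_\cals x\phi_n$ forces $x\phi_n\in\calh$ is exactly the point the paper leaves implicit.
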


\begin{proof}
Let $(\phi_n)$ be a family of choice functions for $\cals \vartriangleleft \calg$ such that $\phi_n\in \End_\calg(\cals)$ for all $n$.
Write $\cals =\bigcup_m \eta_m$ as the union of countably many $\eta_m\in [\cals]$.
Then $\calg =\bigcup_{n, m}\phi_n \eta_m$.
For all $n$ and $m$, $(\phi_n\eta_m)\cap \calh$ belongs to $\End_\calh(\cals)|_{D_{n, m}}$ for some Borel subset $D_{n, m}\subset \calg^0$ and $\calh =\bigcup_{n, m} (\phi_n\eta_m) \cap \calh$.
By Lemma \ref{lem-cri-nor}, we have $\cals \vartriangleleft \calh$.
\end{proof}

\begin{lem}\label{lem-conv-cri-nor}
The converse assertion of Lemma \ref{lem-cri-nor} also holds.
In fact, if $(\calg, \mu)$ is a discrete measured groupoid and $\cals \vartriangleleft \calg$ is a normal Borel subgroupoid, then there exists a sequence $(\phi_n)$ of $\phi_n\in \llbracket \calg \rrbracket_\cals$ such that $\calg =\bigcup_n \phi_n$.
\end{lem}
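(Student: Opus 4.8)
The plan is to manufacture the required bisections by refining a family of choice functions. Since $\cals\vartriangleleft\calg$, fix a family of choice functions $(\phi_j)_{j\in J}$ for $\cals<\calg$ with every $\phi_j\in\End_\calg(\cals)$ and $r(\phi_j)=\calg^0$. The choice-function property yields $\calg=\bigcup_j\phi_j\cals$: for almost every $x$ and every $g\in x\calg$ there is a $j$ with $g\sim_\cals x\phi_j$, i.e.\ $g\in(x\phi_j)\cals$. Writing $\cals=\bigcup_m\eta_m$ with $\eta_m\in\llbracket\cals\rrbracket$ (every discrete Borel groupoid is a countable union of Borel bisections), we obtain $\calg=\bigcup_{j,m}\phi_j\eta_m$. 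Each $\eta_m$ lies in $\llbracket\calg\rrbracket_\cals$, since conjugation by a bisection of $\cals$ carries $\cals$ onto $\cals$ (its inverse carries it back), and $\llbracket\calg\rrbracket_\cals$ is closed under products. So the whole statement reduces to writing each $\phi_j$ as a countable union of elements of $\llbracket\calg\rrbracket_\cals$.

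To pass from $r$-sections to bisections, note that $\bar{\phi}_j\colon\calg^0\to\calg^0$, $x\mapsto s(x\phi_j)$, is countable-to-one, so by Lusin--Novikov there is a Borel partition $\calg^0=\bigsqcup_k E_k$ with $\bar{\phi}_j$ injective on each $E_k$. Then $\chi_k\coloneqq E_k\phi_j$ is a Borel bisection with $r(\chi_k)=E_k$, and $\phi_j=\bigsqcup_k\chi_k$. It remains to check $\chi_k\in\llbracket\calg\rrbracket_\cals$, that is $V_{\chi_k}(\cals|_{r(\chi_k)})=\cals|_{s(\chi_k)}$. One inclusion is immediate: $\phi_j\in\End_\calg(\cals)$ gives $V_{\chi_k}(\cals|_{r(\chi_k)})\subset\cals|_{s(\chi_k)}$.

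The hard part is the reverse inclusion $\cals|_{s(\chi_k)}\subset V_{\chi_k}(\cals|_{r(\chi_k)})$, equivalently $V_{\chi_k^{-1}}(\cals|_{s(\chi_k)})\subset\cals|_{r(\chi_k)}$: membership in $\End_\calg(\cals)$ only gives the one-sided containment $V_\phi(\cals)\subset\cals$, whereas $\llbracket\calg\rrbracket_\cals$ demands equality. To cross this gap I would exploit the ``inverse'' hidden in a family of choice functions. For almost every $x$ there is a unique $k'$ with $x\phi_j\phi_{k'}\in\cals$ (apply the choice-function property to $(x\phi_j)^{-1}\in s(x\phi_j)\calg$); equivalently $(x\phi_j)^{-1}=(s(x\phi_j)\phi_{k'})\sigma$ for some $\sigma\in\cals$. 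The index $k'$ is $\cals$-invariant in the source variable, so on an $\cals$-invariant Borel partition of $s(\chi_k)$ the bisection $\chi_k^{-1}$ takes the form $\phi_{k'}\sigma$, a bisection piece of a choice function followed by an $r$-section into $\cals$. Since $\phi_{k'}\in\End_\calg(\cals)$ and conjugation by an element of $\cals$ preserves $\cals$, this gives $V_{\chi_k^{-1}}(\cals|_{s(\chi_k)})\subset\cals|_{r(\chi_k)}$; as $\chi_k$ is a bisection, $V_{\chi_k^{-1}}=(V_{\chi_k})^{-1}$, so applying $V_{\chi_k}$ upgrades the forward containment to the desired equality, whence $\chi_k\in\llbracket\calg\rrbracket_\cals$.

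Finally I would assemble everything: $\calg=\bigcup_{j,m}\phi_j\eta_m=\bigcup_{j,k,m}\chi_k^{(j)}\eta_m$, a countable union in which each factor lies in $\llbracket\calg\rrbracket_\cals$ and hence each product $\chi_k^{(j)}\eta_m$ does as well, by closure of $\llbracket\calg\rrbracket_\cals$ under products. Re-indexing this countable family gives the sequence $(\phi_n)$ with $\phi_n\in\llbracket\calg\rrbracket_\cals$ and $\calg=\bigcup_n\phi_n$ claimed in the lemma. The only genuinely delicate point is the equality in the previous paragraph, and I expect the inverse-choice-function identity $x\phi_j\phi_{k'}\in\cals$ to be exactly what makes it go through in the general (possibly infinite) index case, where a naive counting argument would fail.
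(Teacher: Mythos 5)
Your proposal is correct and follows essentially the same route as the paper's proof: both reduce to a family of choice functions $(\phi_j)\subset\End_\calg(\cals)$, cut each $\phi_j$ into bisections, invoke the ``inverse'' index $k'$ with $x\phi_j\phi_{k'}\in\cals$, and use $\phi_{k'}\in\End_\calg(\cals)$ together with conjugation by elements of $\cals$ to upgrade the one-sided containment $V_{\phi_j}(\cals)\subset\cals$ to the equality defining $\llbracket\calg\rrbracket_\cals$, before absorbing $\cals$ via a countable family $(\eta_m)$. (The paper packages this as the sandwich $\cals=V_{\phi_{k'}}V_{\phi_j}(\cals)\subset V_{\phi_{k'}}(\cals)\subset\cals$ and cancels $V_{\phi_{k'}}$ after arranging injectivity, while your factorization $\chi_k^{-1}=\phi_{k'}\sigma$ with $\sigma\subset\cals$ is the same identity rearranged; just note that what you actually need is that $k'$ be constant on each piece of the refined partition of $r(\chi_k)$ --- automatic for a countable-valued Borel function --- rather than that the induced partition of $s(\chi_k)$ be $\cals$-invariant, a claim that would be circular at this stage.)
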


\begin{proof}
We may assume that the index function for $\cals <\calg$ is constant, and let $J$ be a set whose cardinality is the index for $\cals <\calg$.
Since $\cals \vartriangleleft \calg$, there exists a family $(\phi_j)_{j\in J}$ of choice functions for $\cals <\calg$ such that $\phi_j\in \End_\calg(\cals)$ for every $j\in J$.
As in the proof of Lemma \ref{lem-inverse}, there exists a Borel map $\kappa \colon Z\times J\to J$ such that $x\phi_j\phi_{\kappa(\theta(x), j)}\in \cals$ for almost every $x\in \calg^0$, where $\theta \colon (\calg^0, \mu)\to (Z, \zeta)$ is the ergodic decomposition map for $(\cals, \mu)$.

Fix $j\in J$.
There exists a countable Borel partition $Z=\bigsqcup_{k\in J}Z_k$ such that $x\phi_j\phi_k\in \cals$ for almost every $x\in \theta^{-1}(Z_k)$.
For every $k\in J$, there exists a countable Borel partition $\theta^{-1}(Z_k)=\bigsqcup_nD_{k, n}$ such that $\bar{\phi}_j$ is injective on $D_{k, n}$ and $\bar{\phi}_k$ is injective on $\bar{\phi}_j(D_{k, n})$.
Then we have $V_{\phi_j}(\cals|_{D_{k, n}})\subset \cals|_{\bar{\phi}_j(D_{k, n})}$ and
\[\cals|_{\bar{\phi}_k( \bar{\phi}_j(D_{k, n}))}=V_{\phi_k}V_{\phi_j}(\cals|_{D_{k, n}})\subset V_{\phi_k}(\cals|_{\bar{\phi}_j(D_{k, n})})\subset \cals|_{\bar{\phi}_k( \bar{\phi}_j(D_{k, n}))}.\]
Therefore $V_{\phi_j}(\cals|_{D_{k, n}})=\cals|_{\bar{\phi}_j(D_{k, n})}$ for all $k\in J$ and all $n$.
Thus $D_{k, n}\phi_j\in \llbracket \calg \rrbracket_\cals$.
It follows that $\phi_j$ is the union $\phi_j=\bigsqcup_l \phi_{j, l}$ of countably many $\phi_{j, l}\in \llbracket \calg \rrbracket_\cals$.

Choose a sequence $(\eta_m)$ of $\eta_m\in [\cals]$ such that $\cals =\bigcup_m \eta_m$.
Then the family of $\phi_{j, l}\eta_m$ with all $j\in J$, $l$ and $m$ is a desired one.
\end{proof}

The following fact is inspired by \cite[Theorem 2.1]{bowen} and implies the converse assertion of Theorem \ref{thm-quotient}.

\begin{prop}\label{prop-bowen}
Let $(\calg, \mu)$ be a discrete measured groupoid and let $\calq$ be a discrete Borel groupoid.
Let $\theta \colon \calg \to \calq$ be a Borel homomorphism.
Then $\ker \theta \vartriangleleft \calg$.
\end{prop}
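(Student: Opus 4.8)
The plan is to reduce the statement to the covering criterion of Lemma \ref{lem-cri-nor}. First I would set $\cals \coloneqq \ker \theta =\theta^{-1}(\calq^0)$. Since $\theta$ is Borel and $\calq^0$ is a Borel subset of $\calq$, the set $\cals$ is Borel; it contains $\calg^0$ because $\theta(\calg^0)\subset \calq^0$, and it is closed under the product and inverse because $\theta$ is a homomorphism, so $\cals$ is a Borel subgroupoid of $\calg$ with unit space $\calg^0$. The basic observation I would record is the following description of $\sim_\cals$: for $g, h\in x\calg$ we have $g\sim_\cals h$ if and only if $\theta(g)=\theta(h)$, because $\theta(h^{-1}g)=\theta(h)^{-1}\theta(g)$ is a unit exactly when $\theta(g)=\theta(h)$. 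In particular, for every $\gamma \in \cals$ we have $\theta(r(\gamma))=\theta(s(\gamma))$, a fact I will use repeatedly.

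To produce the required $r$-sections I would work through $\calq$. Since $\calq$ is a discrete Borel groupoid, its range map is countable-to-one, so by the Lusin--Novikov theorem I can write $\calq =\bigcup_m \beta_m$ as a countable union of Borel $r$-sections $\beta_m$ of $\calq$. For each $m$ the set $\theta^{-1}(\beta_m)\subset \calg$ is Borel, and applying Lusin--Novikov again to the countable-to-one map $r$ restricted to $\theta^{-1}(\beta_m)$, I can write $\theta^{-1}(\beta_m)=\bigcup_k \sigma_{m, k}$ as a countable union of Borel $r$-sections $\sigma_{m, k}$ of $\calg$. Relabelling the pairs $(m, k)$ by a single index $n$ then yields a countable family $(\sigma_n)$ of Borel $r$-sections of $\calg$ with $\calg =\bigcup_n \sigma_n$.

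The crux---and the only step that requires an idea---is to check that each $\sigma =\sigma_{m, k}$ lies in $\End_\calg(\cals)|_{D}$ with $D\coloneqq r(\sigma)$. Here the key mechanism is that, because $\beta_m$ is an $r$-section of $\calq$ and $\sigma \subset \theta^{-1}(\beta_m)$, for $x\in D$ the element $\theta(x\sigma)$ is the unique element of $\beta_m$ with range $\theta(x)$; in particular $\theta(x\sigma)$ depends only on $\theta(x)$. Now I would take $\gamma \in \cals|_{D}$ and put $x=r(\gamma)$, $y=s(\gamma)$. By the observation above $\theta(x)=\theta(y)$, whence $\theta(x\sigma)=\theta(y\sigma)$, and then
\[\theta(\sigma^{-1}\gamma \sigma)=\theta((x\sigma)^{-1}\gamma (y\sigma))=\theta(x\sigma)^{-1}\theta(\gamma)\theta(y\sigma)=\theta(x\sigma)^{-1}\theta(y\sigma),\]
which is a unit of $\calq$ since $\theta(x\sigma)=\theta(y\sigma)$ and both have range $\theta(x)=\theta(y)$. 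Thus $\sigma^{-1}\gamma \sigma \in \cals$, proving $\sigma \in \End_\calg(\cals)|_{D}$. With the covering $\calg =\bigcup_n \sigma_n$ now consisting of elements of $\End_\calg(\cals)|_{r(\sigma_n)}$, Lemma \ref{lem-cri-nor} applies verbatim and gives $\cals \vartriangleleft \calg$.

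I expect the main obstacle to be precisely the choice of decomposition in the second paragraph: an arbitrary partition of $\calg$ into Borel $r$-sections need not be compatible with conjugation by $\cals$, and the essential trick is to cut $\calg$ along the preimages of a partition of $\calq$ into $r$-sections, so that $\theta(x\sigma)$ becomes a function of $\theta(x)$ alone and the identity $\theta(r(\gamma))=\theta(s(\gamma))$ for $\gamma \in \cals$ can be exploited. Once this is set up, everything else is routine, and I note that the whole argument is purely Borel, requiring nothing about the measure $\mu$ beyond the measurable selections supplied by Lusin--Novikov.
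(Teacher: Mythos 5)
Your proof is correct and follows essentially the same route as the paper's: both cut $\calg$ along preimages of a Lusin--Novikov decomposition of $\calq$ into Borel $r$-sections, verify the $\End_\calg(\cals)|_D$ condition by observing that $\theta(\sigma^{-1}\gamma\sigma)$ is conjugate in $\calq$ to the unit $\theta(\gamma)$, and conclude via Lemma \ref{lem-cri-nor}. The only cosmetic difference is that the paper first normalizes $|z\calq|$ to be constant and completes the covering of $\calg$ by multiplying a single section $\phi_j\subset\theta^{-1}(\psi_j)$ by countably many $\eta\in[\cals]$, whereas you exhaust each $\theta^{-1}(\beta_m)$ by a second application of Lusin--Novikov; both are fine.
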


\begin{proof}
We put $\cals =\ker \theta$. 
The function on $\calq^0$, $z\mapsto |z\calq |$, is $\calq$-invariant.
We may assume that the value $|z\calq |$ is constant on $\calq^0$ after restricting $\calq$ to a $\calq$-invariant subset of $\calq^0$ on which this function is constant and replacing $\calg$ with its inverse image under $\theta$.
Let $J$ be a countable set whose cardinality is that constant.
By the Luzin-Novikov uniformization theorem \cite[Theorem 18.10]{kechris}, there exists a family $(\psi_j)_{j\in J}$ of Borel $r$-sections $\psi_j$ of $\calq$ such that $r(\psi_j)=\calq^0$ and $\calq =\bigsqcup_{j\in J}\psi_j$.

For every $x\in \calg^0$, we have the map $\theta \colon x\calg \to \theta(x)\calq =\{ \, \theta(x)\psi_j\mid j\in J\, \}$.
For $j\in J$, we set
\[X_j= \{ \, x\in \calg^0 \mid \theta(x)\psi_j\in \theta(x\calg)\, \},\]
Since $X_j=r(\theta^{-1}(\psi_j))$, it is Borel.
Pick a Borel $r$-section $\phi_j\subset \theta^{-1}(\psi_j)$ with $r(\phi_j)=X_j$.
Then $\phi_j\in \End_\calg(\cals)|_{X_j}$.
Indeed if $\gamma \in \cals|_{X_j}$, then $\theta(\phi_j^{-1}\gamma \phi_j)=\psi_j^{-1}\theta(\gamma)\psi_j$, which is a unit, and hence $\phi_j^{-1}\gamma \phi_j\in \ker \theta =\cals$.

Write $\calg$ as the union of the products $\phi_j\eta$ for all $j\in J$ and countably many $\eta \in [\cals]$.
By Lemma \ref{lem-cri-nor}, we conclude $\cals \vartriangleleft \calg$.
\end{proof}

%%%%%%%%%%%%%%%%%%%%%%%%%%%%

\subsection{An example}

We give a nontrivial example of normal subgroupoids which does not come from normal subgroups.
The following lemma generalizes \cite[Proposition B.1]{kida-bs}.

\begin{lem}\label{lem-ex-nor}
Let $G$ be a countable group.
Let $E<G$ be a subgroup and $(t_n)$ a countable family of elements of $G$ such that
\begin{itemize}
\item $E$ and $(t_n)$ generate $G$, and
\item for every $n$, if we put $E_n^-=E\cap t_n^{-1}Et_n$ and $E_n^+=t_nEt_n^{-1}\cap E$, then both $E_n^-$ and $E_n^+$ are finite-index normal subgroups of $E$.
\end{itemize}
Let $G\c X$ be a Borel action on a standard Borel space $X$ which preserves the class of a probability measure $\mu$ on $X$.
We assume that for every $n$, there exist $E$-equivariant Borel maps $X\to E/E_n^-$ and $X\to E/E_n^+$.
Then $X\rtimes E$ is normal in $(X\rtimes G, \mu)$. 
\end{lem}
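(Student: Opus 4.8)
The plan is to apply the normality criterion of Lemma \ref{lem-cri-nor} to $\cals = X\rtimes E$ inside $\calg = X\rtimes G$: it suffices to exhibit countably many Borel $r$-sections of $\calg$, each lying in some $\End_\calg(\cals)|_D$, whose union is all of $\calg$. Writing a general element of $\calg$ as $(x,g)$ with $g\in G$ and recalling $r(x,g)=x$, $s(x,g)=g^{-1}x$, I first record the conjugation formula: for an $r$-section $\phi=\{(x,c(x)):x\in X\}$ with $r(\phi)=X$ and $\gamma=(y,a)\in\cals$ (so $a\in E$), a direct computation gives $V_\phi(\gamma)=(c(y)^{-1}y,\ c(y)^{-1}a\,c(a^{-1}y))$. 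Hence $\phi\in\End_\calg(\cals)$ exactly when $c(y)^{-1}a\,c(a^{-1}y)\in E$ for every $a\in E$ and a.e.\ $y$.

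Next I would build the relevant sections from the equivariant maps. Fix $n$ and put $c(x)=d(x)t_n$ with $d\colon X\to E$ Borel. Then the group part of $V_\phi(\gamma)$ is $t_n^{-1}\bigl(d(y)^{-1}a\,d(a^{-1}y)\bigr)t_n$, which lies in $E$ iff $d(y)^{-1}a\,d(a^{-1}y)\in t_nEt_n^{-1}\cap E=E_n^+$; and this holds for all $a$ precisely when $x\mapsto d(x)E_n^+$ is $E$-equivariant. The hypothesised $E$-equivariant Borel map $X\to E/E_n^+$ (lifted to a Borel $d_n^+\colon X\to E$, which exists since $E/E_n^+$ is finite) therefore yields $\phi_n^+=\{(x,d_n^+(x)t_n)\}\in\End_\calg(\cals)$. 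Symmetrically, using $X\to E/E_n^-$ and $c(x)=d_n^-(x)t_n^{-1}$ gives $\phi_n^-\in\End_\calg(\cals)$, the relevant conjugate now landing in $t_n^{-1}Et_n\cap E=E_n^-\subset E$.

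The crux is the covering condition, and here a single $\phi_n^+$ is not enough: at a fixed basepoint $x$ the section $\phi_n^+\cals$ only reaches the group elements $d_n^+(x)t_nE$, i.e.\ one of the $[E:E_n^+]$ pieces of the double coset $Et_nE$. To span them all I would use normality of $E_n^+$ in $E$: for each of the finitely many cosets $cE_n^+\in E/E_n^+$ the map $x\mapsto d_n^+(x)cE_n^+$ is again $E$-equivariant (since $E_n^+\trianglelefteq E$, if $d_n^+(bx)=bd_n^+(x)e$ with $e\in E_n^+$ then $c^{-1}ec\in E_n^+$, so $d_n^+(bx)cE_n^+=bd_n^+(x)cE_n^+$), giving $\phi_{n,c}^+=\{(x,d_n^+(x)ct_n)\}\in\End_\calg(\cals)$. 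As $c$ runs over a transversal of $E_n^+$ in $E$, the elements $\{d_n^+(x)c\}_c$ form a full transversal, so $\bigcup_c \phi_{n,c}^+\cals=\{(x,g):g\in Et_nE\}$, and likewise $\bigcup_c\phi_{n,c}^-\cals$ exhausts $\{(x,g):g\in Et_n^{-1}E\}$.

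Finally I would assemble everything. Choosing $\eta_m\in[\cals]$ with $\cals=\bigcup_m\eta_m$, the family of all finite products of the $\eta_m$, the $\phi_{n,c}^+$ and the $\phi_{n,c}^-$ is countable, and each such product is a Borel $r$-section lying in $\End_\calg(\cals)$ by Lemma \ref{lem-s-inv}(ii). Because $E$ and $(t_n)$ generate $G$, every $g\in G$ is a word $a_0t_{n_1}^{\epsilon_1}a_1\cdots t_{n_k}^{\epsilon_k}a_k$ with $a_i\in E$, so $(x,g)$ lies in a product of blocks of the form $\cals$ and $\bigcup_c\phi_{n,c}^{\pm}\cals$; hence the union of these products is all of $\calg$, and Lemma \ref{lem-cri-nor} yields $\cals\vartriangleleft\calg$. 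I expect the main obstacle to be exactly the double-coset covering of the third paragraph — recognizing that a single equivariant map captures only one piece of $Et_nE$, and that normality of $E_n^\pm$ is precisely what lets one span the whole double coset with a finite family of sections; the $\End_\calg(\cals)$-membership is then a routine consequence of the conjugation formula.
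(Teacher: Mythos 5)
Your proof is correct, and it rests on exactly the same two ingredients as the paper's: the $E$-equivariant maps to $E/E_n^\pm$ are what force the conjugated group element into $E_n^+=t_nEt_n^{-1}\cap E$ (so that conjugation by $t_n$ lands back in $E$), and normality of $E_n^\pm$ in $E$ is what lets you span the whole double coset $Et_nE$. The difference is one of packaging. The paper keeps the group coordinate constant and cuts the unit space instead: it takes the single bisection $\phi_n=X\times\{t_n\}$ and restricts it to the sets $t_n(X_k^-)\cap X_l^+$, where $X_l^+$ is a level set of the equivariant map $X\to E/E_n^+$, observing that $\cale|_{X_l^+}=X\rtimes E_n^+|_{X_l^+}$; these restrictions lie in $\llbracket\calg\rrbracket_\cale$ and the conclusion follows from Corollary \ref{cor-cri-nor-gen} with no explicit word bookkeeping. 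You instead keep full-range $r$-sections and twist the group coordinate by a Borel lift $d_n^\pm$ of the equivariant map, which obliges you to verify the double-coset covering and the word-by-word generation of $\calg$ by hand via Lemma \ref{lem-cri-nor}. The two are interconvertible (your $\phi_{n,c}^+$ restricted to a level set of $d_n^+$ is, up to left multiplication by $\cale$, one of the paper's pieces), but the paper's version is shorter because restricting the domain makes the conjugation computation a one-line identity $V_{\phi_n}(X\rtimes E_n^+|_{\cdot})=X\rtimes E_n^-|_{\cdot}$ and delegates the covering to the general corollary; your version has the mild advantage of producing sections defined on all of $X$ and of making the role of normality of $E_n^\pm$ (permuting cosets so that a single transversal suffices) completely explicit. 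One small point worth stating if you write this up: your $\phi_{n,c}^\pm$ are genuinely only $r$-sections, not bisections in general (the source map $x\mapsto t_n^{-1}c^{-1}d_n^+(x)^{-1}x$ need not be injective when the action is not free), so Lemma \ref{lem-cri-nor} is indeed the right criterion to invoke rather than Corollary \ref{cor-cri-nor-gen}.
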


\begin{proof}
We put $\calg =X\rtimes G$ and $\cale =X\rtimes E$.
For each $n$, define $\phi_n\in [\calg]$ by $\phi_n=X\times \{ t_n\}$.
By the existence of the $E$-equivariant maps in the lemma, we have a decomposition
\[X=X_1^-\sqcup \cdots \sqcup X_K^-=X_1^+\sqcup \cdots \sqcup X_L^+\]
into Borel sets, where $K\coloneqq |E/E_n^-|$ and $L\coloneqq |E/E_n^+|$, such that every $X_k^-$ is $E_n^-$-invariant and every $X_l^+$ is $E_n^+$-invariant.

We claim that $\phi_n\cap \calg|_{t_n(X_k^-)\cap X_l^+}$ belongs to $\llbracket \calg \rrbracket_\cale$ for all $k$ and $l$.
By Corollary \ref{cor-cri-nor-gen}, this is enough to imply the lemma.
Since $\cale|_{X_l^+}=X\rtimes E_n^+|_{X_l^+}$, we have
\[\cale|_{t_n(X_k^-)\cap X_l^+}=X\rtimes E_n^+|_{t_n(X_k^-)\cap X_l^+}.\]
Hence
\begin{align*}
V_{\phi_n}(\cale|_{t_n(X_k^-)\cap X_l^+})&=V_{\phi_n}(X\rtimes E_n^+|_{t_n(X_k^-)\cap X_l^+})=X\rtimes E_n^-|_{X_k^-\cap t_n^{-1}(X_l^+)}\\
&=\cale|_{X_k^-\cap t_n^{-1}(X_l^+)},
\end{align*}
where the second equation follows from $t_n^{-1}E_n^+t_n=E_n^-$, and the third equation follows from $X\rtimes E_n^-|_{X_k^-}=\cale|_{X_k^-}$.
\end{proof}

We give an example which will be focused later.

\begin{ex}
Let $E$ be a countable group and let $\tau \colon E_-\to E_+$ be an isomorphism between finite-index normal subgroups of $E$.
We define $G$ as the HNN extension
\[G=\langle \, E,\, t\mid \forall a\in E_-\ \ tat^{-1}=\tau(a)\, \rangle.\]
Let $G\c X$ be a Borel action on a standard Borel space $X$ which preserves the class of a probability measure $\mu$ on $X$.
If there exist $E$-equivariant Borel maps $X\to E/E_-$ and $X\to E/E_+$, then $X\rtimes E$ is normal in $(X\rtimes G, \mu)$ by Lemma \ref{lem-ex-nor}.
\end{ex}

In the rest of this subsection, we keep the notation in Lemma \ref{lem-ex-nor} and suppose that the action $G\c X$ preserves $\mu$.
Let $\theta \colon X\rtimes G\to \calq$ be the quotient homomorphism.
We collect notable properties of $\theta$.
Throughout our discussion in the rest of this subsection, we fix $n$ and hence put $t=t_n$ and $E_\pm =E_n^\pm$ for simplicity.
Let $\theta_\pm \colon (X, \mu)\to (Z_\pm, \zeta_\pm)$ be the ergodic decomposition maps of the p.m.p.\ actions $E_\pm \c (X, \mu)$, respectively.

\begin{lem}\label{lem-eq-erg-comp0}
For almost all $z\in \calq^0$, for almost all $x, x'\in \theta^{-1}(z)$, the following assertions hold:
\begin{enumerate}
\item[(i)] We have $\theta(x, t)=\theta(x', t)$ if and only if $\theta_+(x)=\theta_+(x')$. %$x$ and $y$ lie in the same $E_+$-ergodic component.
\item[(ii)] We have $\theta(x, t^{-1})=\theta(x', t^{-1})$ if and only if $\theta_-(x)=\theta_-(x')$. %$x$ and $y$ lie in the same $E_-$-ergodic component.
\end{enumerate}
\end{lem}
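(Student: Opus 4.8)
The plan is to prove (i) in full and to obtain (ii) by the identical argument with $t$, $E_+$, $\theta_+$ replaced by $t^{-1}$, $E_-$, $\theta_-$ (the only change being that the coset relation $atE=tE\iff a\in E_+$ becomes $at^{-1}E=t^{-1}E\iff a\in E_-$). Throughout I identify $\calg=X\rtimes G$ and $\cale=X\rtimes E$, write $z=\theta(x)=\theta(x')$, and reduce to the standing situation where the index of $\cale$ in $\calg$ is constant and all displayed relations hold off a $\mu$-null, $\calg$-invariant set. Two elementary algebraic facts drive everything. First, since $\ker\theta=\cale$ by Theorem \ref{thm-quotient}(1), for any point $y$ and $g,h\in y\calg$ one has $\theta(g)=\theta(h)$ iff $\theta(h^{-1}g)$ is a unit iff $h^{-1}g\in\cale$; in the group picture this reads $\theta(y,g_1)=\theta(y,g_2)\iff g_1E=g_2E$. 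Second, for $a\in E$ the factorization $(ay,ag_1)=(ay,a)(y,g_1)$ with $(ay,a)\in\cale$ gives the translation identity $\theta(ay,ag_1)=\theta(y,g_1)$.

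Combining these yields the \emph{sharp orbit statement}: for a.e.\ $y$ and every $a\in E$, $\theta(ay,t)=\theta(y,t)$ iff $a\in E_+$. Indeed $\theta(ay,at)=\theta(y,t)$ by the translation identity, while applying the first fact at the point $ay$ gives $\theta(ay,t)=\theta(ay,at)$ iff $atE=tE$, i.e.\ iff $t^{-1}at\in E$, i.e.\ iff $a\in tEt^{-1}\cap E=E_+$. The implication $a\in E_+\Rightarrow\theta(ax,t)=\theta(x,t)$ shows that $x\mapsto\theta(x,t)$ is invariant under the orbit equivalence relation of $E_+\c(X,\mu)$; since the $\sigma$-algebra of $E_+$-invariant Borel sets coincides $\mu$-a.e.\ with the pullback under $\theta_+$, this map factors a.e.\ through $\theta_+$, giving a Borel $\Psi$ with $\theta(x,t)=\Psi(\theta_+(x))$. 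In particular $\theta_+(x)=\theta_+(x')$ implies $\theta(x,t)=\theta(x',t)$, which is the ``if'' part of (i).

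For the ``only if'' part it remains to show that $\Psi$ is injective on the finitely many ergodic components of $E_+$ inside the $E$-ergodic component $z$. I would fix $x$ with $\theta(x)=z$ and run over the orbit $Ex$: applying the sharp statement at $a'x$ with $aa'^{-1}$ in place of $a$ gives $\theta(ax,t)=\theta(a'x,t)\iff aa'^{-1}\in E_+\iff aE_+=a'E_+$, so $\{\theta(ax,t):a\in E\}$ consists of exactly $[E:E_+]$ distinct arrows. Since $\theta(ax,t)=\Psi(\theta_+(ax))$ and $\Psi$ is a genuine function, distinct arrows force distinct values $\theta_+(ax)$; hence $aE_+\mapsto\theta_+(ax)$ is injective and $\Psi$ is injective on the set of $E_+$-components met by $Ex$. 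Because $E_+\vartriangleleft E$ and $E$ acts ergodically on $z$, the group $E$ permutes the $E_+$-components of $z$ transitively, so $Ex$ meets every one of them; thus $\Psi$ is injective on all $E_+$-components of $z$. Finally, from $\theta(x,t)=\theta(x',t)$ with $\theta(x)=\theta(x')=z$ both sides equal $\Psi(\theta_+(x))=\Psi(\theta_+(x'))$, and injectivity gives $\theta_+(x)=\theta_+(x')$.

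The hard part is precisely this last injectivity, and the reason is structural: $\theta$ only remembers ergodic components, so $x,x'$ range over the whole fibre $\theta^{-1}(z)$ and are generically in \emph{different} $E$-orbits, where the naive ``double-coset'' description of $\theta$-fibres breaks down. The device that overcomes this is to test $\Psi$ along a single orbit $Ex$, where the sharp orbit statement counts exactly $[E:E_+]$ distinct arrows, and then to transport this count to all components of $z$ using transitivity of the $E$-action on the $E_+$-components, which is where normality of $E_+$ in $E$ together with ergodicity of $E$ on $z$ enters. (As a sanity check, the argument in fact forces $E/E_+$ to act freely on these components, so $z$ carries exactly $[E:E_+]$ of them.)
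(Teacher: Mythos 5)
Your proof is correct and follows essentially the same route as the paper: the ``if'' direction via $E_+$-invariance of $x\mapsto\theta(x,t)$ (which comes down to $t^{-1}bt=\tau^{-1}(b)\in E_-$ for $b\in E_+$) and the ``only if'' direction via Britton's lemma, i.e., $t^{-1}at\notin E$ for $a\in E\setminus E_+$, so that $\theta(t^{-1}ax,t^{-1}at)$ cannot be a unit. The only difference is that you make explicit the reduction from arbitrary $x,x'$ in the same $\theta$-fibre to the single-orbit statement $\theta(ax,t)\neq\theta(x,t)$ for $a\notin E_+$, using transitivity of $E$ on the $E_+$-ergodic components of the fibre--a step the paper compresses into ``it suffices to show''.
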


\begin{proof}
The map $x\mapsto \theta(x, t)$ is $E_+$-invariant.
Indeed for every $b\in E_+$, we have
\begin{align*}
\theta(bx, t)&=\theta(x, b^{-1})\theta(bx, t)=\theta(x, b^{-1}t)=\theta(x, t\tau^{-1}(b^{-1}))=\theta(x, t)\theta(t^{-1}x, \tau^{-1}(b^{-1}))\\
&=\theta(x, t).
\end{align*}
This proves the ``if'' part of assertion (i). 
For the ``only if'' part, it suffices to show that for each $a\in E\setminus E_+$, we have $\theta(x, t)\neq \theta(ax, t)$.
Since $\theta(ax, t)=\theta(x, a^{-1})\theta(ax, t)=\theta(x, a^{-1}t)$, if $\theta(x, t)= \theta(ax, t)$ held, then the element
\[\theta(x, a^{-1}t)^{-1}\theta(x, t)=\theta(t^{-1}ax, t^{-1}a)\theta(x, t)=\theta(t^{-1}ax, t^{-1}at)\]
would be a unit.
This contradicts $t^{-1}at\not\in E$.
Assertion (ii) follows similarly.
\end{proof}

We have the measure-preserving maps $\sigma_\pm \colon Z_\pm \to Z$ such that $\theta =\sigma_\pm \circ \theta_\pm$.
Almost every fiber of $\sigma_\pm$ has cardinality $[E: E_\pm]$, respectively.
Since $tE_-t^{-1}=E_+$, the action of $t$ on $X$ induces the measure-preserving isomorphism $t\colon Z_-\to Z_+$, which we denote by the same symbol $t$.
Then $t\circ \theta_-=\theta_+\circ t$.

By Lemma \ref{lem-eq-erg-comp0} (i), the map $x\mapsto \theta(x, t)$ is $E_+$-invariant and hence induces the map $Z_+\to \calq$.
We denote this map by $y\mapsto \bar{\theta}(y, t)$, which satisfies the equation $\bar{\theta}(\theta_+(x), t)=\theta(x, t)$ for almost every $x\in X$.
%If $f\colon Z_+\to \calq$ is a Borel map such that $f(\theta_+(x))=\theta(x, t)$ for almost every $x\in X$, then $f(y)=\bar{\theta}(y, t)$ for almost every $y\in Z_+$.
Similarly by Lemma \ref{lem-eq-erg-comp0} (ii), the map $x\mapsto \theta(x, t^{-1})$ is $E_-$-invariant and hence induces the map $Z_-\to \calq$.
We denote this map by $y\mapsto \bar{\theta}(y, t^{-1})$.

\begin{lem}\label{lem-bar-theta}
For almost every $y\in Z_+$, the following assertions hold:
\begin{enumerate}
\item[(i)] The range and source of $\bar{\theta}(y, t)$ are $\sigma_+(y)$ and $\sigma_-(t^{-1}y)$, respectively.
\item[(ii)] The range and source of $\bar{\theta}(t^{-1}y, t^{-1})$ are $\sigma_-(t^{-1}y)$ and $\sigma_+(y)$, respectively.
\item[(iii)] We have $\bar{\theta}(y, t)^{-1}=\bar{\theta}(t^{-1}y, t^{-1})$.
\end{enumerate}
\end{lem}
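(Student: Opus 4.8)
The plan is to derive all three assertions directly from the single structural fact that $\theta$ is a groupoid homomorphism—hence intertwines the range, source, and inversion maps of $X\rtimes G$ with those of $\calq$—combined with the defining relations $\theta=\sigma_\pm\circ\theta_\pm$ and $t\circ\theta_-=\theta_+\circ t$, together with the defining equations $\bar{\theta}(\theta_+(x),t)=\theta(x,t)$ and $\bar{\theta}(\theta_-(x),t^{-1})=\theta(x,t^{-1})$. For a fixed $y\in Z_+$ I would choose a representative $x\in X$ with $\theta_+(x)=y$; since $\theta_+$ is measure preserving and the identities I invoke hold for almost every $x$, they transfer to almost every $y$.

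For assertion (i), writing $\bar{\theta}(y,t)=\theta(x,t)$, the homomorphism property gives range $\theta(r(x,t))=\theta(x)=\sigma_+(\theta_+(x))=\sigma_+(y)$ and source $\theta(s(x,t))=\theta(t^{-1}x)=\sigma_-(\theta_-(t^{-1}x))$. It then remains only to identify $\theta_-(t^{-1}x)$: evaluating the relation $t\circ\theta_-=\theta_+\circ t$ at $t^{-1}x$ and applying $t^{-1}$ yields $\theta_-(t^{-1}x)=t^{-1}\theta_+(x)=t^{-1}y$, so the source is $\sigma_-(t^{-1}y)$, as claimed.

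For assertion (ii), I would exploit that $t^{-1}x$ is a natural representative of $t^{-1}y$, since $\theta_-(t^{-1}x)=t^{-1}y$ by the computation just made; thus $\bar{\theta}(t^{-1}y,t^{-1})=\theta(t^{-1}x,t^{-1})$, whose range is $\theta(t^{-1}x)=\sigma_-(t^{-1}y)$ and whose source is $\theta(s(t^{-1}x,t^{-1}))=\theta(x)=\sigma_+(y)$, using $s(t^{-1}x,t^{-1})=x$ in $X\rtimes G$. For assertion (iii), since $\theta$ intertwines inversion and $(x,t)^{-1}=(t^{-1}x,t^{-1})$, I obtain $\bar{\theta}(y,t)^{-1}=\theta(x,t)^{-1}=\theta(t^{-1}x,t^{-1})=\bar{\theta}(\theta_-(t^{-1}x),t^{-1})=\bar{\theta}(t^{-1}y,t^{-1})$.

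None of these steps presents a genuine obstacle: each reduces to a one-line manipulation once the homomorphism property and the two compatibility squares are in hand. The only point requiring (routine) care is the measure-theoretic bookkeeping—ensuring that the countably many almost-everywhere identities above, all stated for almost every $x\in X$, assemble into a single conull set of $y\in Z_+$ via the measure-preserving surjection $\theta_+$. This is a standard conull-intersection argument, and it is the one place where I would spend the attention.
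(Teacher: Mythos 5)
Your proposal is correct and follows essentially the same route as the paper: both compute $r(\theta(x,t))=\theta(x)=\sigma_+(\theta_+(x))$ and $s(\theta(x,t))=\theta(t^{-1}x)=\sigma_-(\theta_-(t^{-1}x))=\sigma_-(t^{-1}\theta_+(x))$ using the homomorphism property of $\theta$, the factorizations $\theta=\sigma_\pm\circ\theta_\pm$, and the intertwining relation $t\circ\theta_-=\theta_+\circ t$, and both obtain (iii) from $\theta(x,t)^{-1}=\theta(t^{-1}x,t^{-1})$. The measure-theoretic transfer from almost every $x\in X$ to almost every $y\in Z_+$ is handled implicitly in the paper and explicitly (but routinely) in your write-up; there is no substantive difference.
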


\begin{proof}
For almost every $x\in X$, we have
\begin{align*}
r(\bar{\theta}(\theta_+(x), t))&=r(\theta(x, t))=\theta(x)=\sigma_+(\theta_+(x))\ \ \text{and}\\
s(\bar{\theta}(\theta_+(x), t))&=s(\theta(x, t))=\theta(t^{-1}x)=\sigma_-(\theta_-(t^{-1}x))=\sigma_-(t^{-1}\theta_+(x)).
\end{align*}
Assertion (i) hence follows.
Assertion (ii) follows similarly.
Assertion (iii) follows from
\[\bar{\theta}(\theta_+(x), t)^{-1}=\theta(x, t)^{-1}=\theta(t^{-1}x, t^{-1})=\bar{\theta}(\theta_-(t^{-1}x), t^{-1})=\bar{\theta}(t^{-1}\theta_+(x), t^{-1})\]
for almost every $x\in X$.
\end{proof}

\begin{lem}\label{lem-inj}
The map $Z_+\to \calq$, $y\mapsto \bar{\theta}(y, t)$ is injective after discarding a null set.
The map $Z_-\to \calq$, $y\mapsto \bar{\theta}(y, t^{-1})$ is also injective after discarding a null set.
\end{lem}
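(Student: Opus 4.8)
The plan is to reduce injectivity of $y \mapsto \bar\theta(y,t)$ to the ``only if'' direction of Lemma~\ref{lem-eq-erg-comp0}(i) by working fiberwise over the factor map $\sigma_+\colon Z_+\to Z$. The guiding observation is that, by Lemma~\ref{lem-bar-theta}(i), the range of $\bar\theta(y,t)$ equals $\sigma_+(y)$; hence $\bar\theta(y,t)=\bar\theta(y',t)$ forces $\sigma_+(y)=\sigma_+(y')$. It therefore suffices to show that for $\zeta$-almost every $z\in Z$ the map $\bar\theta(\cdot,t)$ is injective on the finite fiber $\sigma_+^{-1}(z)$, after which the global statement follows since points in distinct fibers already carry distinct ranges.

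First I would record the fiber structure. We are in the measure-preserving case, so the $E$-ergodic components $\mu_z$ (the disintegration of $\mu$ along $\theta=\sigma_+\circ\theta_+$) are $E$-invariant probability measures, and each fiber $\sigma_+^{-1}(z)$ is a single $E/E_+$-orbit of cardinality $[E:E_+]$. Consequently $(\theta_+)_*\mu_z$ is an $E/E_+$-invariant probability measure on a transitive finite $E/E_+$-set, hence uniform; in particular it charges every point of $\sigma_+^{-1}(z)$. This positivity is the decisive ingredient, because it is exactly what upgrades an ``almost every pair'' conclusion into a statement about \emph{all} pairs in the finite fiber.

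Next I would fix a conull $Z_0\subset Z$ on which both the conclusion of Lemma~\ref{lem-eq-erg-comp0}(i) holds (for $\mu_z\times\mu_z$-a.e.\ $(x,x')$ one has $\theta(x,t)=\theta(x',t)$ iff $\theta_+(x)=\theta_+(x')$) and the defining identity $\theta(x,t)=\bar\theta(\theta_+(x),t)$ holds $\mu_z$-a.e. For $z\in Z_0$, substituting the identity and pushing the a.e.\ statement forward under $\theta_+\times\theta_+$ converts it into a statement valid for $(\theta_+)_*\mu_z\times(\theta_+)_*\mu_z$-a.e.\ pair $(y,y')\in\sigma_+^{-1}(z)^2$; since this product measure gives positive mass to every pair, the equivalence $\bar\theta(y,t)=\bar\theta(y',t)\iff y=y'$ holds for \emph{all} $y,y'$ in the fiber. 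Injectivity on $\sigma_+^{-1}(z)$ is precisely the implication $\bar\theta(y,t)=\bar\theta(y',t)\Rightarrow y=y'$.

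Finally I would assemble the global statement: on the conull set $\sigma_+^{-1}(Z_0)$, if $\bar\theta(y,t)=\bar\theta(y',t)$ then $\sigma_+(y)=\sigma_+(y')\in Z_0$ by the range computation, and fiberwise injectivity forces $y=y'$. The second assertion, injectivity of $y\mapsto\bar\theta(y,t^{-1})$ on $Z_-$, is entirely parallel, using Lemma~\ref{lem-eq-erg-comp0}(ii) together with Lemma~\ref{lem-bar-theta}(ii) (so that the range of $\bar\theta(w,t^{-1})$ is $\sigma_-(w)$) and the factor map $\sigma_-\colon Z_-\to Z$. The main obstacle I anticipate is not the logical skeleton but the justification that the fiber measures charge every point, i.e.\ the uniformity of $(\theta_+)_*\mu_z$ on $\sigma_+^{-1}(z)$; once that positivity is in hand, the remainder is routine measure-theoretic bookkeeping.
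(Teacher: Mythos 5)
Your argument is correct and rests on the same key input as the paper's proof, namely Lemma~\ref{lem-eq-erg-comp0}~(i) together with the fact that $y\mapsto r(\bar{\theta}(y, t))=\sigma_+(y)$ is finite-to-one. The only difference is the measure-theoretic packaging: you disintegrate $\mu$ over $Z$ and use that the fiber measures $(\theta_+)_*\mu_z$ charge every point of the finite fiber $\sigma_+^{-1}(z)$ in order to upgrade the almost-everywhere statement to all pairs in a fiber, whereas the paper argues by contradiction, extracting from a failure of injectivity a nonnull Borel set $A\subset Z_+$ and an injective measure-preserving map $f\colon A\to Z_+$ with $A\cap f(A)=\emptyset$ and $\bar{\theta}(y, t)=\bar{\theta}(f(y), t)$ on $A$, and then applies the same lemma to lifts $x, x'$ to reach the contradiction.
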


\begin{proof}
The map $Z_+\to \calq$, $y\mapsto r(\bar{\theta}(y, t))=\sigma_+(y)$ is finite-to-one.
The map $y\mapsto \bar{\theta}(y, t)$ is hence finite-to-one.
If the first assertion of the lemma were not true, then there would exist a nonnull Borel subset $A\subset Z_+$ and an injective measure-preserving map $f\colon A\to Z_+$ such that $A\cap f(A)=\emptyset$ and $\bar{\theta}(y, t)=\bar{\theta}(f(y), t)$ for all $y\in A$.
For almost every $x\in \theta_+^{-1}(A)$,
\[\theta(x, t)=\bar{\theta}(\theta_+(x), t)=\bar{\theta}(f(\theta_+(x)), t)=\theta(x', t)\]
for almost every $x'\in \theta_+^{-1}(f(\theta_+(x)))$.
By Lemma \ref{lem-eq-erg-comp0} (i), we have $\theta_+(x)=\theta_+(x')$, which is $f(\theta_+(x))$.
This contradicts $A\cap f(A)=\emptyset$.

The second assertion of the lemma follows similarly. 
\end{proof}

By Lemma \ref{lem-inj}, for every Borel subset $A\subset Z_+$, the set
\[\bar{\theta}(A, t)\coloneqq \{ \, \bar{\theta}(y, t)\mid y\in A\, \}\]
is a Borel subset of $\calq$.
Similarly for every Borel subset $B\subset Z_-$, the set
\[\bar{\theta}(B, t^{-1})\coloneqq \{ \, \bar{\theta}(y, t^{-1})\mid y\in B\, \}\]
is a Borel subset of $\calq$.
Note that by Lemma \ref{lem-bar-theta} (iii), for every Borel subset $A\subset Z_+$, we have $\bar{\theta}(A, t)^{-1}=\bar{\theta}(t^{-1}A, t^{-1})$.

\begin{lem}\label{lem-section}
The following assertions hold:
\begin{enumerate}
\item[(i)] If $A\subset Z_+$ is a Borel subset on which $\sigma_+$ is injective, then $\bar{\theta}(A, t)$ is an $r$-section of $\calq$ and $r(\bar{\theta}(A, t))=\sigma_+(A)$.
\item[(ii)] If $B\subset Z_-$ is a Borel subset on which $\sigma_-$ is injective, then $\bar{\theta}(tB, t)$ is an $s$-section of $\calq$ and $s(\bar{\theta}(tB, t))=\sigma_-(B)$.
\end{enumerate}
Thus for the above $A$ and $B$, the set $\bar{\theta}(A\cap tB, t)$ belongs to $\llbracket \calq \rrbracket$.
\end{lem}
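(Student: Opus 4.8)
The plan is to read off all three assertions directly from the range--source formula in Lemma~\ref{lem-bar-theta}~(i), combined with the injectivity established in Lemma~\ref{lem-inj}; no idea beyond careful bookkeeping is required. Recall that by Lemma~\ref{lem-bar-theta}~(i) an element $\bar{\theta}(y, t)$ with $y \in Z_+$ has range $\sigma_+(y)$ and source $\sigma_-(t^{-1}y)$, and that each set $\bar{\theta}(A, t)$ is a genuine Borel subset of $\calq$ by Lemma~\ref{lem-inj} together with the discussion preceding the statement.

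For assertion~(i), I would take $y_1, y_2 \in A$ and suppose $r(\bar{\theta}(y_1, t)) = r(\bar{\theta}(y_2, t))$. By the range formula this reads $\sigma_+(y_1) = \sigma_+(y_2)$, and since $\sigma_+$ is injective on $A$ by hypothesis we obtain $y_1 = y_2$, hence $\bar{\theta}(y_1, t) = \bar{\theta}(y_2, t)$. Thus $r$ is injective on $\bar{\theta}(A, t)$, so it is an $r$-section, and the range formula gives $r(\bar{\theta}(A, t)) = \sigma_+(A)$ at once.

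For assertion~(ii), I would argue dually using the source formula. For $y \in tB$, write $y = ty'$ with $y' \in B$, so that $t^{-1}y = y' \in B$ and the source of $\bar{\theta}(y, t)$ is $\sigma_-(y')$. If two elements $\bar{\theta}(ty_1', t)$ and $\bar{\theta}(ty_2', t)$ with $y_1', y_2' \in B$ share the same source, then $\sigma_-(y_1') = \sigma_-(y_2')$; injectivity of $\sigma_-$ on $B$ forces $y_1' = y_2'$ and hence equality of the two elements. Therefore $s$ is injective on $\bar{\theta}(tB, t)$, making it an $s$-section, and $s(\bar{\theta}(tB, t)) = \sigma_-(B)$ by the source formula. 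For the concluding bisection claim, note that $A \cap tB$ is contained in $A$, so $\bar{\theta}(A \cap tB, t)$ is an $r$-section by~(i); it is also contained in $tB$, so it is a subset of the $s$-section $\bar{\theta}(tB, t)$ from~(ii) and hence is itself an $s$-section. Being simultaneously an $r$-section and an $s$-section, it lies in $\llbracket \calq \rrbracket$.

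I do not expect a genuine obstacle here: the content is entirely formal manipulation of the identities from Lemmas~\ref{lem-bar-theta} and~\ref{lem-inj}. The only point that demands a little care is keeping the ``after discarding a null set'' caveats of Lemma~\ref{lem-inj} compatible across the three assertions, so that $\bar{\theta}(A, t)$, $\bar{\theta}(tB, t)$ and $\bar{\theta}(A \cap tB, t)$ are simultaneously well defined as Borel sets. This is handled by fixing at the outset one conull subset of $Z_+$ (and its image under $t^{-1}$ in $Z_-$) on which both injectivity statements and both range--source identities hold, and working inside it throughout.
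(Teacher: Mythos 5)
Your proposal is correct and follows essentially the same route as the paper: both read assertion (i) off from the range formula $r(\bar{\theta}(y,t))=\sigma_+(y)$ of Lemma \ref{lem-bar-theta} together with the injectivity hypothesis on $\sigma_+$, and assertion (ii) from the corresponding source data (the paper phrases this via the $r$-section $\bar{\theta}(B,t^{-1})$ and the identity $\bar{\theta}(tB,t)=\bar{\theta}(B,t^{-1})^{-1}$, which is the same computation as your direct use of $s(\bar{\theta}(y,t))=\sigma_-(t^{-1}y)$). Your explicit handling of the final bisection claim and of the null-set caveats is a harmless elaboration of what the paper leaves implicit.
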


\begin{proof}
Assertion (i) follows from the equation $r(\bar{\theta}(y, t))=\sigma_+(y)$ for almost every $y\in Z_+$.
If $B\subset Z_-$ is a Borel subset on which $\sigma_-$ is injective, then $\bar{\theta}(B, t^{-1})$ is an $r$-section of $\calq$ because $r(\bar{\theta}(y, t^{-1}))=\sigma_-(y)$ for almost every $y\in Z_-$.
Assertion (ii) hence follows from the equation $\bar{\theta}(tB, t)=\bar{\theta}(B, t^{-1})^{-1}$. 
\end{proof}

%%%%%%%%%%%%%%%%%%%%%%%%%%%%%%%%%%%

\subsection{The Radon-Nikodym cocycle}\label{subsec-rn}

Let $(\calg, \mu)$ be a discrete measured groupoid.
We have the two equivalent measures $\mu_\calg^r$, $\mu_\calg^s$ on $\calg$ introduced in equation (\ref{eq-mu}).
We define the \textit{Radon-Nikodym cocycle} $\Delta \colon \calg \to \R_+^*$ of $(\calg, \mu)$ by $\Delta =d \mu_\calg^r/d \mu_\calg^s$, following \cite[Definition 2.1]{fm1}.

\begin{prop}[\ci{Proposition 2.2 and Corollary 2 in p.294}{fm1}]\label{prop-fm}
The map $\Delta$ is indeed a cocycle, and furthermore the following holds: 
For every $\phi \in \llbracket \calg \rrbracket$, we have
\[\frac{d\bar{\phi}_*\mu}{d\mu}(y)=\Delta(\phi y)\]
for almost every $y\in s(\phi)$.
\end{prop}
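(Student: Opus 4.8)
The plan is to establish the displayed bisection formula first, since it does not use the cocycle property, and then to deduce that $\Delta$ is a cocycle from it. The starting point is to record how the measures $\mu_\calg^r$ and $\mu_\calg^s$ restrict to a Borel bisection $\phi$. Because $r$ is injective on $\phi$, the counting-measure definition of $\mu_\calg^r$ gives $\mu_\calg^r(A)=\mu(r(A))$ for every Borel $A\subset \phi$, so that $(r|_\phi)_*(\mu_\calg^r|_\phi)=\mu|_{r(\phi)}$; symmetrically $(s|_\phi)_*(\mu_\calg^s|_\phi)=\mu|_{s(\phi)}$. Here $r(A)$ and $s(A)$ are Borel because $r|_\phi$ and $s|_\phi$ are Borel isomorphisms of $\phi$ onto their images.

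For the bisection formula I would first note the identity $\bar{\phi}=s|_\phi\circ(r|_\phi)^{-1}$, whence
\[\bar{\phi}_*(\mu|_{r(\phi)})=\bar{\phi}_*(r|_\phi)_*(\mu_\calg^r|_\phi)=(s|_\phi)_*(\mu_\calg^r|_\phi).\]
By the very definition $\Delta=d\mu_\calg^r/d\mu_\calg^s$ we have $\mu_\calg^r|_\phi=\Delta\,\mu_\calg^s|_\phi$, so pushing forward by $s|_\phi$ and using both $(s|_\phi)_*(\mu_\calg^s|_\phi)=\mu|_{s(\phi)}$ and $(s|_\phi)^{-1}(y)=\phi y$ yields
\[\frac{d\bar{\phi}_*(\mu|_{r(\phi)})}{d\mu|_{s(\phi)}}(y)=\Delta(\phi y)\]
for almost every $y\in s(\phi)$, which is the stated assertion.

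To obtain the cocycle property I would decompose $\calg=\bigsqcup_n\phi_n$ into countably many Borel bisections, applying the Luzin--Novikov theorem to the countable-to-one map $r$ and then to $s$, as in the proof of Proposition \ref{prop-bowen}. For indices $m,n$ the product $\phi_m\phi_n$ is again a Borel bisection, with $\overline{\phi_m\phi_n}=\bar{\phi}_n\circ\bar{\phi}_m$. Applying the bisection formula to $\phi_m$, $\phi_n$ and $\phi_m\phi_n$, and combining it with the chain rule for Radon--Nikodym derivatives of pushforwards,
\[\frac{d(\bar{\phi}_n\circ\bar{\phi}_m)_*\mu}{d\mu}(w)=\frac{d(\bar{\phi}_m)_*\mu}{d\mu}(\bar{\phi}_n^{-1}w)\cdot\frac{d(\bar{\phi}_n)_*\mu}{d\mu}(w),\]
gives $\Delta((\phi_m\phi_n)w)=\Delta(\phi_m\,r(\phi_n w))\,\Delta(\phi_n w)$ for almost every $w$. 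Since every composable pair $(g,h)$ lies in some $\phi_m\times\phi_n$ with $h=\phi_n w$, $g=\phi_m\,r(h)$ and $gh=(\phi_m\phi_n)w$, this reads exactly $\Delta(gh)=\Delta(g)\Delta(h)$; as there are only countably many pairs $(m,n)$, the exceptional sets union to a null set and the cocycle identity holds almost everywhere.

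The main obstacle is the bookkeeping of the almost-everywhere quantifiers and measure classes in the last step: the chain rule must be applied on the correct domains, namely where $s(\phi_m)$ meets $r(\phi_n)$, and one must check that the natural measure on the set of composable pairs decomposes compatibly with the partition $\{\phi_m\times\phi_n\}$, so that the countable union of exceptional sets remains null. The bisection formula itself, by contrast, is a direct unwinding of the definition of $\Delta$ as a Radon--Nikodym derivative together with the two elementary pushforward identities for $\mu_\calg^r$ and $\mu_\calg^s$ on a bisection.
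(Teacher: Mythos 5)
Your proposal is correct and follows essentially the same route as the paper: the displayed formula is obtained exactly as in the paper by unwinding the definition of $\Delta$ together with the identities $(r|_\phi)_*(\mu_\calg^r|_\phi)=\mu|_{r(\phi)}$ and $(s|_\phi)_*(\mu_\calg^s|_\phi)=\mu|_{s(\phi)}$, and the cocycle identity is then deduced by composing bisections. The only (harmless) difference is presentational: where the paper verifies $\Delta(\phi\psi y)=\Delta(\phi\, r(\psi y))\Delta(\psi y)$ by a change-of-variables computation against test functions, you invoke the equivalent chain rule for Radon--Nikodym derivatives of pushforwards, and you make explicit the countable partition of $\calg$ into bisections and the union of null exceptional sets that the paper leaves implicit.
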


\begin{proof}
While this is given in \cite{fm1} for principal $\calg$ and follows from it, we supply the proof for the reader's convenience.
%First we verify the latter assertion.
For every $\phi \in \llbracket \calg \rrbracket$ and every Borel subset $A\subset s(\phi)$,
\begin{align*}
(\bar{\phi}_*\mu)(A)=\mu(\bar{\phi}^{-1}(A))=\mu(r(\phi  A))=\mu_\calg^r(\phi A)=\int_{\phi  A}\frac{d\mu_\calg^r}{d\mu_\calg^s}\, d\mu_\calg^s =\int_A\frac{d\mu_\calg^r}{d\mu_\calg^s}(\phi y)\, d\mu(y).
\end{align*}
Therefore the latter assertion of the proposition holds.

We show that $\Delta$ is a cocycle.
For all $\phi, \psi \in \llbracket \calg \rrbracket$ with $s(\phi)=r(\psi)$ and for all bounded Borel functions $f$ on $A\coloneqq s(\psi)$, noticing $r(\psi \bar{\psi}(y))=y$ and $(\phi \psi)^-=\bar{\psi}\circ \bar{\phi}$, we have
\begin{align*}
&\int_Af(y)\Delta(\phi r(\psi y))\Delta(\psi y)\, d\mu(y)=\int_{\bar{\psi}^{-1}(A)}f(\bar{\psi}(y))\Delta(\phi y)\, d\mu(y)\\
&=\int_{\bar{\phi}^{-1}(\bar{\psi}^{-1}(A))}f(\bar{\psi}(\bar{\phi}(y)))\, d\mu(y)=\int_Af(y)\Delta(\phi \psi y)\, d\mu(y).
\end{align*}
Thus $\Delta(\phi \psi y)=\Delta(\phi r(\psi y))\Delta(\psi y)$ for almost every $y\in A$.
\end{proof}

Let $G$ be a countable group.
Let $E$ be a \textit{quasi-normal} subgroup of $G$, i.e., a subgroup of $G$ such that for every $g\in G$, the subgroup $E\cap gEg^{-1}$ is of finite index in both $E$ and $gEg^{-1}$.
We then have the \textit{modular homomorphism} $\bm{m} \colon G\to \Q_+^*$ defined by
\[\bm{m}(g)=\frac{[E: E\cap {}^g\!E]}{[{}^g\!E: E\cap {}^g\!E]}\]
for $g\in G$, where ${}^g\!E \coloneqq gEg^{-1}$.
This is indeed a homomorphism:
For all $g, h\in G$,
\begin{align*}
\bm{m}(g)=\frac{[E: E\cap {}^g\!E\cap {}^{gh}\!E]}{[{}^g\!E: E\cap {}^g\!E\cap {}^{gh}\!E]},\quad \bm{m}(h)=\frac{[{}^g\!E: {}^g\!E\cap {}^{gh}\!E]}{[{}^{gh}\!E: {}^g\!E \cap {}^{gh}\!E]}=\frac{[{}^g\!E: E\cap {}^g\!E\cap {}^{gh}\!E]}{[{}^{gh}\!E: E\cap {}^g\!E\cap {}^{gh}\!E]}
\end{align*}
and 
\[\bm{m}(g)\bm{m}(h)=[E: E\cap {}^g\!E\cap {}^{gh}\!E]/[{}^{gh}\!E: E\cap {}^g\!E\cap {}^{gh}\!E]=\bm{m}(gh).\]
The following generalizes a fact shown in the proof of \cite[Proposition B.2]{kida-bs}.

\begin{prop}\label{prop-rn}
With the notation in Lemma \ref{lem-ex-nor}, we assume that the action $G\c X$ preserves the probability measure $\mu$.
Let $(\calq, \zeta)$ be the quotient groupoid of $(X\rtimes G, \mu)$ by $X\rtimes E$, with the quotient homomorphism $\theta \colon X\rtimes G\to \calq$.
Note that $E$ is quasi-normal in $G$ and hence gives rise to the modular homomorphism $\bm{m}\colon G\to \Q_+^*$.
Then the Radon-Nikodym cocycle $\Delta$ of $(\calq, \zeta)$ satisfies the equation
\[\Delta(\theta(x, g))=\bm{m}(g)\]
for almost every $x\in X$ and every $g\in G$. 
\end{prop}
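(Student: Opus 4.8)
The plan is to exploit that both $(x,g)\mapsto \Delta(\theta(x,g))$ and $(x,g)\mapsto \bm{m}(g)$ are $\R_+^*$-valued cocycles on $X\rtimes G$ and to check that they agree on a generating family of bisections. For the first, $\Delta$ is a cocycle on $\calq$ by Proposition \ref{prop-fm} and $\theta$ is a homomorphism, so the composite is a cocycle; for the second, $(x,g)\mapsto\bm{m}(g)$ is a cocycle because $\bm{m}$ is a homomorphism, as verified above. Setting $c(x,g)=\Delta(\theta(x,g))\bm{m}(g)^{-1}$, which is again a cocycle, it suffices to show $c\equiv 1$ on the bisections $X\times\{s\}$ with $s\in E$ and on $X\times\{t_n\}$ for each $n$; since $E$ and $(t_n)$ generate $G$, these bisections generate $X\rtimes G$, and a cocycle equal to $1$ on a generating family of bisections equals $1$ almost everywhere by the cocycle identity.

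The case $g=s\in E$ is immediate: since $\ker\theta=X\rtimes E$, the element $\theta(x,s)$ is a unit and $\Delta(\theta(x,s))=1$, while $\bm{m}(s)=1$ because ${}^sE=E$ forces $E\cap{}^sE=E$. It remains to treat $g=t_n$; fix $n$ and abbreviate $t=t_n$ and $E_\pm=E_n^\pm$, keeping the notation $\sigma_\pm\colon Z_\pm\to Z$ and $\theta_\pm\colon X\to Z_\pm$ from above. First I would record that $\bm{m}(t)=[E:E_+]/[E:E_-]$: indeed $E\cap{}^tE=E_+$ by definition, while $[{}^tE:E_+]=[tEt^{-1}:tE_-t^{-1}]=[E:E_-]$ using $tE_-t^{-1}=E_+$.

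The heart of the argument is the computation of $\Delta(\theta(x,t))=\Delta(\bar{\theta}(\theta_+(x),t))$. I would pick a Borel set $A\subset Z_+$ on which $\sigma_+$ is injective and on whose image $t^{-1}A$ the map $\sigma_-$ is injective, so that $\phi\coloneqq\bar{\theta}(A,t)$ is a Borel bisection of $\calq$ by Lemma \ref{lem-section}, with $r(\phi)=\sigma_+(A)$ and, by Lemma \ref{lem-bar-theta}, $\bar{\phi}=\sigma_-\circ t^{-1}\circ(\sigma_+|_A)^{-1}$. By Proposition \ref{prop-fm}, $\Delta(\phi w)=\tfrac{d\bar{\phi}_*\zeta}{d\zeta}(w)$ for almost every $w\in s(\phi)$, so it remains to identify this Radon--Nikodym derivative. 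The key input is that, because $G$ preserves $\mu$ and $E_\pm\vartriangleleft E$ with $E/E_\pm$ acting transitively on almost every fiber of $\sigma_\pm$ (of cardinality $[E:E_\pm]$), the conditional measures of $\zeta_\pm$ along $\sigma_\pm$ are uniform; hence on an injective branch $(\sigma_+|_A)_*(\zeta_+|_A)=[E:E_+]^{-1}\,\zeta|_{\sigma_+(A)}$, and likewise for $\sigma_-$. Combining this with the fact that $t\colon(Z_-,\zeta_-)\to(Z_+,\zeta_+)$ is measure-preserving, a chain-rule computation along $\bar{\phi}=\sigma_-\circ t^{-1}\circ(\sigma_+|_A)^{-1}$ gives $\bar{\phi}_*(\zeta|_{\sigma_+(A)})=\tfrac{[E:E_+]}{[E:E_-]}\,\zeta|_{s(\phi)}$, whence $\Delta(\theta(x,t))=[E:E_+]/[E:E_-]=\bm{m}(t)$ for almost every $x$.

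The step I expect to be the main obstacle is justifying the uniformity of the conditional measures of $\zeta_\pm$ along $\sigma_\pm$: this is exactly what makes $\Delta(\bar{\theta}(y,t))$ independent of the branch $y$, and it relies on the periodicity hypothesis (the $E$-equivariant maps $X\to E/E_\pm$) together with the invariance of $\mu$ under $E$, through the induced measure-preserving transitive action of $E/E_\pm$ on the fibers. Everything else is bookkeeping with Radon--Nikodym derivatives and the cocycle identity.
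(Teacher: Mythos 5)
Your proposal is correct and follows essentially the same route as the paper: reduce via the cocycle identity to the generators $E$ and $t_n$, realize $\theta(\cdot,t)$ as the bisection $\bar{\theta}(A,t)$ on an injective branch of $\sigma_+$, and compute $d\bar{\phi}_*\zeta/d\zeta$ from the fiber cardinalities $[E:E_\pm]$ of $\sigma_\pm$ together with the measure-preservation of $t\colon Z_-\to Z_+$, then invoke Proposition \ref{prop-fm}. The "main obstacle" you flag (uniformity of the conditional measures of $\zeta_\pm$ over $\sigma_\pm$) is exactly the point the paper uses implicitly via the statement that almost every fiber of $\sigma_\pm$ has cardinality $[E:E_\pm]$, the transitive $\zeta_\pm$-preserving action of $E/E_\pm$ on those fibers supplying the justification in both treatments.
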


\begin{proof}
For every $a\in E$, $\theta(x, a)$ is a unit and hence $\Delta(\theta(x, a))=1=\bm{m}(a)$.
Since $E$ and $(t_n)$ generate $G$, it suffices to show that $\Delta(\theta(x, t_n))=\bm{m}(t_n)$ for every $n$ and almost every $x\in X$. 
We fix $n$ and put $t=t_n$ and $E_\pm =E_n^\pm$ for simplicity.
Let $\theta_\pm \colon (X, \mu)\to (Z_\pm, \zeta_\pm)$ be the ergodic decomposition maps of the p.m.p.\ actions $E_\pm \c (X, \mu)$, respectively.
Take finite Borel partitions $Z_-=\bigsqcup_k Z_-^k$ and $Z_+=\bigsqcup_l Z_+^l$ such that $\sigma_-$ is injective on each $Z_-^k$ and $\sigma_+$ is injective on each $Z_+^l$.

Fix $k$ and $l$.
Let $\phi \coloneqq \bar{\theta}(tZ_-^k\cap Z_+^l, t)$, where $\bar{\theta}(\cdot, t)$ is the symbol introduced right before Lemma \ref{lem-section}.
By Lemma \ref{lem-section}, we have $\phi \in \llbracket \calq \rrbracket$,
\[r(\phi)=\sigma_+(tZ_-^k\cap Z_+^l)\ \ \text{and}\ \ s(\phi)=\sigma_-(Z_-^k\cap t^{-1}Z_+^l).\]
If $s(\phi)$ is nonnull, then
\[\frac{\zeta(r(\phi))}{\zeta(s(\phi))}=\frac{[E: E_+]\zeta_+(tZ_-^k\cap Z_+^l)}{[E: E_-]\zeta_-(Z_-^k\cap t^{-1}Z_+^l)}=\frac{[E: E_+]}{[E: E_-]}=\bm{m}(t).\]
For every Borel subset $Y\subset s(\phi)$, partitioning $Z_-^k$ and $Z_+^l$ into finitely many Borel subsets further and applying the argument so far, we obtain $(\bar{\phi}_*\zeta)(Y)=\zeta(\bar{\phi}^{-1}(Y))=\bm{m}(t)\zeta(Y)$.
Therefore for almost every $x\in \theta_-^{-1}(Z_-^k\cap t^{-1}Z_+^l)$, we have
\[\Delta(\theta(tx, t))=\Delta(\phi \theta(x))=\frac{d\bar{\phi}_*\zeta}{d\zeta}(\theta(x))=\bm{m}(t),\]
where the second equation holds by Proposition \ref{prop-fm}.
This holds for all $k$ and $l$.
Thus the proposition follows.
\end{proof}

%%%%%%%%%%%%%%%%%%%%%%%%%%%%%%%%%%%

%%%%%%%%%%%%%%%%%%%%%%%%%%%%%%%%%%%

\section{Treeings and the induction}\label{sec-treeing}

Let $\calg$ be a discrete Borel groupoid.
Let $\Psi \subset \calg \setminus \calg^0$ be a Borel subset which is \textit{symmetric}, i.e., $\Psi =\Psi^{-1}$.
For each $x\in \calg^0$, we define the simplicial graph $\Psi(x)$ as follows:
Vertices of $\Psi(x)$ are elements of $x\calg$, and two vertices $g, h\in x\calg$ are joined by an edge if and only if $g^{-1}h\in \Psi$.
We mean by a \textit{$\Psi(x)$-edge} an edge of the graph $\Psi(x)$. % for some $x\in \calg^0$.
For every $\gamma \in \calg$, the left-multiplication $s(\gamma)\calg \to r(\gamma)\calg$, $g\mapsto \gamma g$ induces the graph isomorphism $\Psi(s(\gamma))\to \Psi(r(\gamma))$.

\begin{defn}
A Borel subset $\Psi \subset \calg \setminus \calg^0$ is called a \textit{graphing} of $\calg$ if $\Psi$ is symmetric and the graph $\Psi(x)$ is connected for every $x\in \calg^0$. 
If $\Psi$ is a graphing of $\calg$ and $\Psi(x)$ is a tree for every $x\in \calg^0$, then $\Psi$ is called a \textit{treeing} of $\calg$.

Note that for a symmetric Borel subset $\Psi \subset \calg \setminus \calg^0$, the set of $x\in \calg^0$ such that $\Psi(x)$ is connected is Borel.
For a graphing $\Psi$ of $\calg$, the set of $x\in \calg^0$ such that $\Psi(x)$ is a tree is Borel.
\end{defn}

Let $\calg$ be a discrete Borel groupoid and let $\Psi \subset \calg \setminus \calg^0$ be a treeing of $\calg$.
Suppose that $\Psi$ has a Borel subset $\Psi_+$ such that $\Psi =\Psi_+\sqcup \Psi_+^{-1}$.
This condition is regarded as the treeing $\Psi$ being oriented.
Given a Borel subset $Y\subset \calg^0$, we construct a treeing of $\calg|_Y$, following Gaboriau's induction in \cite[Lemme II.8]{gaboriau}.
%(We encourage the reader to understand his argument in ?? for equivalence relations before going to )

We define a function $d\colon \calg \to \N \cup \{ 0\}$ by
\[d(g)=\text{dist}_{\Psi(r(g))}(g, r(g)\calg Y)\]
for $g\in \calg$, where for $x\in \calg^0$, $\text{dist}_{\Psi(x)}$ is the graph metric on $\Psi(x)$ with each edge having distance $1$.

\begin{lem}\label{lem-d-inv}
The function $d$ is left-invariant, i.e., $d(g)=d(\gamma g)$ for all $\gamma, g\in \calg$ with $\gamma g$ defined.
\end{lem}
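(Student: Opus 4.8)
The plan is to reduce everything to the equivariance of the treeing recorded just before the definition of graphing: for each $\gamma \in \calg$ the left-multiplication map $L_\gamma \colon s(\gamma)\calg \to r(\gamma)\calg$, $h\mapsto \gamma h$, induces a graph isomorphism $\Psi(s(\gamma))\to \Psi(r(\gamma))$. Since $\gamma g$ is defined we have $s(\gamma)=r(g)$, so $g$ is a vertex of $\Psi(s(\gamma))=\Psi(r(g))$ and $L_\gamma$ carries it to the vertex $\gamma g$ of $\Psi(r(\gamma))=\Psi(r(\gamma g))$. Because a graph isomorphism preserves the graph metric, the only thing that needs checking is that $L_\gamma$ matches the two distinguished vertex subsets against which the distance is measured.

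First I would verify that $L_\gamma$ carries the vertex set $s(\gamma)\calg Y$ onto $r(\gamma)\calg Y$. This is immediate from the source identity $s(\gamma h)=s(h)$: a vertex $h\in s(\gamma)\calg$ lies in $s(\gamma)\calg Y$, i.e.\ $s(h)\in Y$, if and only if $s(\gamma h)\in Y$, i.e.\ $\gamma h\in r(\gamma)\calg Y$. Hence $L_\gamma$ restricts to a bijection between $s(\gamma)\calg Y$ and $r(\gamma)\calg Y$.

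It then follows that applying $L_\gamma$ to a geodesic in $\Psi(r(g))$ realizing the distance from $g$ to $r(g)\calg Y$ produces a path of the same length from $\gamma g$ to $r(\gamma g)\calg Y$ in $\Psi(r(\gamma g))$, and conversely; since $r(g)=s(\gamma)$ and $r(\gamma g)=r(\gamma)$, this gives
\[
d(g)=\text{dist}_{\Psi(r(g))}(g, s(\gamma)\calg Y)=\text{dist}_{\Psi(r(\gamma))}(\gamma g, r(\gamma)\calg Y)=d(\gamma g).
\]
I expect no real obstacle here beyond the bookkeeping of the previous paragraph: the entire content is the left-invariance of the treeing combined with the fact that left-multiplication preserves the source, which is exactly what aligns the two target sets. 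In particular the argument is purely combinatorial and requires no measure-theoretic input.
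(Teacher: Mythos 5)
Your argument is correct and is exactly the paper's proof, merely written out in more detail: the paper's one-line proof likewise invokes the graph isomorphism $\Psi(r(g))\to\Psi(r(\gamma))$ induced by left multiplication together with the induced bijection $r(g)\calg Y\to r(\gamma)\calg Y$. The verification via $s(\gamma h)=s(h)$ is the right justification for that bijection.
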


\begin{proof}
The left multiplication $g\mapsto \gamma g$ induces the graph isomorphism $\Psi(r(g))\to \Psi(r(\gamma))$ and the bijection $r(g)\calg Y\to r(\gamma)\calg Y$.
\end{proof}

We define a procedure to bring each vertex of $\Psi(x)$ to a vertex in $x\calg Y$ along a path in $\Psi(x)$, in a measurable way.
For $n\in \N \cup \{ 0\}$, write $\{  d=n \}$ for the set $\{ \, g\in \calg \mid d(g)=n\, \}$.
Then $\{  d=0 \} =\calg Y$.

Write $\Psi =\bigsqcup_{k=1}^\infty \psi_k$ as a disjoint union of $\psi_k\in \llbracket \calg \rrbracket$ such that for every odd $k$, we have $\psi_k\subset \Psi_+$ and $\psi_{k+1}=\psi_k^{-1}$.
Hence $\Psi_+$ is the disjoint union of all $\psi_k$ with odd $k$.
%Write $\bar{\psi}_k$ for the inverse $\psi_k^{-1}$ (to avoid the bracket to denote $(\psi_k^{-1})^x$ with $x\in s(\psi_k)$). 
For each $n\in \N$, we define a map
\[f_n\colon \{  d=n \} \to \{  d=n-1 \}\]
as follows.
Given $g\in \{  d=n \}$, we choose the minimal $k\in \N$ such that $s(g)\in r(\psi_k)$ and $g\psi_k\in \{ d=n-1 \}$, and define $f_n(g)=g\psi_k$.
Using this map $f_n$, we further define a map $f\colon \calg \to \calg Y$ by
\[f(g)=\begin{cases}
g & \text{if}\ g\in \calg Y,\\
(f_1\circ f_2\circ \cdots \circ f_n)(g) & \text{if}\ g\in \{  d=n \} \ \text{and}\ n\in \N.
\end{cases}\] 
The map $f$ is Borel.

\begin{lem}\label{lem-f-eq}
The maps $f_n$ and $f$ are equivariant with respect to the left-multiplication of $\calg$.
\end{lem}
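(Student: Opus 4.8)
The plan is to establish equivariance of the maps $f_n$ first, and then to deduce equivariance of $f$ by composing. Throughout, fix $\gamma, g \in \calg$ with $\gamma g$ defined, so that $s(\gamma)=r(g)$, $r(\gamma g)=r(\gamma)$ and $s(\gamma g)=s(g)$. By Lemma \ref{lem-d-inv} we have $d(\gamma g)=d(g)$, so $\gamma g$ lies in $\{d=n\}$ whenever $g$ does; this is what makes the statement $f_n(\gamma g)=\gamma f_n(g)$ meaningful.

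For $f_n$, suppose $g\in\{d=n\}$ with $n\in\N$. The definition of $f_n(g)$ selects the minimal $k\in\N$ with $s(g)\in r(\psi_k)$ and $g\psi_k\in\{d=n-1\}$. First I would observe that both selection conditions are literally the same for $g$ and for $\gamma g$. Since $s(\gamma g)=s(g)$, the condition $s(\cdot)\in r(\psi_k)$ is identical; and since $\psi_k$ is a bisection with $s(g)=s(\gamma g)\in r(\psi_k)$, associativity applied to the single element $s(g)\psi_k$ gives
\[
(\gamma g)\psi_k=(\gamma g)\bigl(s(g)\psi_k\bigr)=\gamma\bigl(g(s(g)\psi_k)\bigr)=\gamma(g\psi_k).
\]
By Lemma \ref{lem-d-inv} we then have $d((\gamma g)\psi_k)=d(\gamma(g\psi_k))=d(g\psi_k)$, so the condition $(\cdot)\psi_k\in\{d=n-1\}$ is also identical. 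Hence the minimal admissible index $k$ is the same for $g$ and for $\gamma g$, and $f_n(\gamma g)=(\gamma g)\psi_k=\gamma(g\psi_k)=\gamma f_n(g)$.

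For $f$ I would argue by cases. If $g\in\calg Y$, then $s(\gamma g)=s(g)\in Y$, so $\gamma g\in\calg Y$ and $f(\gamma g)=\gamma g=\gamma f(g)$. If $g\in\{d=n\}$ with $n\in\N$, then $\gamma g\in\{d=n\}$ and $f(\gamma g)=(f_1\circ\cdots\circ f_n)(\gamma g)$. The bookkeeping point I would check before iterating is that each $f_j$ preserves the range: $f_j(h)=h\psi_k$ has range $r(h)$. Consequently every intermediate element $(f_j\circ\cdots\circ f_n)(g)$ has range $r(g)=s(\gamma)$, so each product of $\gamma$ with such an element is defined. Applying the equivariance of $f_n,f_{n-1},\ldots,f_1$ in turn then yields
\[
(f_1\circ\cdots\circ f_n)(\gamma g)=\gamma\,(f_1\circ\cdots\circ f_n)(g)=\gamma f(g),
\]
which completes the argument.

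The only real content is the product identity $(\gamma g)\psi_k=\gamma(g\psi_k)$ together with the transfer of the two selection conditions; once these are in hand everything is formal. I would take modest care that the ``minimal $k$'' is genuinely unaffected by left multiplication, which rests entirely on $s(\gamma g)=s(g)$ and on left-invariance of $d$, and that all products with $\gamma$ remain defined, which follows from range-preservation of the $f_j$. I expect no serious obstacle beyond this bookkeeping.
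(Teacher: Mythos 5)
Your proof is correct and follows essentially the same route as the paper's: equivariance of each $f_n$ is reduced to the left-invariance of $d$ (Lemma \ref{lem-d-inv}) together with the fact that the selection conditions defining the minimal index $k$ are unchanged under left multiplication, and equivariance of $f$ then follows by iterating. The extra bookkeeping you supply (the identity $(\gamma g)\psi_k=\gamma(g\psi_k)$ and the range-preservation of the $f_j$) is implicit in the paper's shorter argument and is harmless.
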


\begin{proof}
It suffices to check the equivariance of $f_n$.
Suppose $g\in \{  d=n  \}$ and $f_n(g)=g\psi_k$ with $k\in \N$.
For all $\gamma \in \calg r(g)$, by Lemma \ref{lem-d-inv}, $n=d(g)=d(\gamma g)$ and $n-1=d(g\psi_k)=d(\gamma g\psi_k)$.
By the minimality of $k$ and the definition of $f_n$, we have $f_n(\gamma g)=\gamma g\psi_k$.
\end{proof}

We define the sets
\[\Psi_{0, +} =\{ \, g^{-1}f_n(g)\mid g\in \{ d=n\},\, n\in \N \, \}\ \  \text{and}\ \ \Psi_0 =\Psi_{0, +}\cup \Psi_{0, +}^{-1},\]
which are Borel subsets of $\Psi$.
Note that $\Psi_{0, +}$ is not necessarily contained in $\Psi_+$.

\begin{rem}\label{rem-unique-0+}
For every $x\in \calg^0\setminus Y$, there exists a unique element of $\Psi_{0, +}$ whose range is $x$.
In fact, that unique element is written as $g^{-1}f_{d(g)}(g)$ for an arbitrary $g \in \calg x$.
By Lemmas \ref{lem-d-inv} and \ref{lem-f-eq}, $g^{-1}f_{d(g)}(g)$ depends only on $x$ and is independent of the choice of $g$.
\end{rem}

Given a $\Psi(x)$-edge $(g, g\gamma)$ with $\gamma \in \Psi \setminus \Psi_0$, we slide the vertices $g$, $g\gamma$ along the oriented edges in $\Psi_{0, +}$.
More precisely, putting $n=d(g)$, we have the geodesic in the graph $\Psi(x)$:
\[g,\ f_n(g),\ (f_{n-1}\circ f_n)(g),\ldots,\ (f_1\circ \cdots \circ f_n)(g)=f(g)\in x\calg Y.\]
Let us denote this geodesic by $l_g$.  
We also have the geodesic $l_{g\gamma}$ from $g\gamma$ to a vertex in $x\calg Y$ defined similarly. 
We will join the two vertices $f(g)$, $f(g\gamma )$ by an edge and obtain a graph with $x\calg Y$ being the set of vertices.
This graph will be shown to be a tree.

We put $\Psi_1=\Psi \setminus \Psi_0$.
For $\gamma \in \Psi_1$ and $g\in \calg$ with $g\gamma$ defined, we define $J(\gamma)\in \calg|_Y$ by
\[J(\gamma)=f(g)^{-1}f(g\gamma).\]
By Lemma \ref{lem-f-eq}, the right hand side does not depend on the choice of $g$.
The element $J(\gamma)$ defines an edge joining the two vertices $f(g)$, $f(g\gamma)=f(g)J(\gamma)$.

\begin{lem}\label{lem-J}
For every $\gamma \in \Psi_1$, we have $J(\gamma)\not\in Y$ and $J(\gamma^{-1})=J(\gamma)^{-1}$.
\end{lem}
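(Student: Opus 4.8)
The plan is to prove the two claimed identities directly from the definition $J(\gamma)=f(g)^{-1}f(g\gamma)$, exploiting that $f$ lands in $\calg Y$ and that, by Lemma \ref{lem-f-eq}, the choice of $g$ with $g\gamma$ defined is immaterial. Throughout I fix $\gamma \in \Psi_1=\Psi \setminus \Psi_0$ and a witness $g\in \calg$ with $g\gamma$ defined, so that $s(g)=r(\gamma)$.

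\emph{Step 1: $J(\gamma)\not\in Y$.} First I record that $J(\gamma)$ genuinely lies in $\calg|_Y$: both $f(g)$ and $f(g\gamma)$ belong to $x\calg Y$ where $x=r(g)=r(g\gamma)$, so $J(\gamma)=f(g)^{-1}f(g\gamma)$ has range and source in $Y$. The content of the first assertion is that $J(\gamma)$ is not a unit, i.e.\ $f(g)\neq f(g\gamma)$. Suppose for contradiction that $f(g)=f(g\gamma)$. I would argue that the concatenation of the geodesic $l_g$ from $g$ to $f(g)$, followed by the reverse of $l_{g\gamma}$ from $f(g\gamma)=f(g)$ back to $g\gamma$, together with the edge $(g, g\gamma)$ coming from $\gamma$, forms a closed loop in the tree $\Psi(x)$. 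Since $\Psi(x)$ is a tree it has no cycles, so the only way this loop can fail to be a genuine cycle is if the edge $(g,g\gamma)$ is already one of the edges traversed in $l_g$ or $l_{g\gamma}$; but every such edge is of the form $(h, f_{d(h)}(h))$, hence its associated groupoid element $h^{-1}f_{d(h)}(h)$ lies in $\Psi_{0,+}\subset \Psi_0$, forcing $\gamma \in \Psi_0$, contrary to $\gamma \in \Psi_1$. This contradiction yields $f(g)\neq f(g\gamma)$, so $J(\gamma)\notin Y$.

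\emph{Step 2: $J(\gamma^{-1})=J(\gamma)^{-1}$.} To compute $J(\gamma^{-1})$ I must pick a witness $g'$ with $g'\gamma^{-1}$ defined, i.e.\ $s(g')=r(\gamma^{-1})=s(\gamma)$. The natural choice is $g'=g\gamma$, since then $g'\gamma^{-1}=g\gamma\gamma^{-1}=g$ is defined and $r(g')=r(g\gamma)=r(g)=x$. By definition and independence of the witness,
\[
J(\gamma^{-1})=f(g')^{-1}f(g'\gamma^{-1})=f(g\gamma)^{-1}f(g)=\bigl(f(g)^{-1}f(g\gamma)\bigr)^{-1}=J(\gamma)^{-1}.
\]
The only point to verify is that $\gamma^{-1}$ again lies in $\Psi_1$, which is immediate because $\Psi_1=\Psi\setminus\Psi_0$ and both $\Psi$ and $\Psi_0$ are symmetric by construction, so $J(\gamma^{-1})$ is well defined via the same recipe.

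\emph{Main obstacle.} The second assertion is a one-line symmetry computation; the genuine work is Step 1. The delicate part there is turning the intuitive ``the loop must be a cycle in a tree'' picture into a clean argument that avoids degenerate concatenations, i.e.\ carefully identifying exactly which edges the geodesics $l_g$ and $l_{g\gamma}$ traverse and verifying that each such edge corresponds to an element of $\Psi_{0,+}$. I expect the cleanest route is to avoid loops altogether: observe that $f(g)$ is the unique vertex of $x\calg Y$ reachable from $g$ by the $\Psi_0$-geodesic $l_g$, so $g$ and $f(g)$ lie in the same connected component of the graph $\Psi_0(x)$ obtained by deleting all $\Psi_1$-edges from $\Psi(x)$; likewise $g\gamma$ and $f(g\gamma)$ share a $\Psi_0$-component. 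If $f(g)=f(g\gamma)$ then $g$ and $g\gamma$ would lie in the same $\Psi_0$-component, and since $(g,g\gamma)$ is an edge realized by $\gamma\in\Psi_1$, the tree $\Psi(x)$ would contain a cycle (a $\Psi_0$-path from $g$ to $g\gamma$ closed up by the $\Psi_1$-edge $\gamma$), contradicting that $\Psi(x)$ is a tree.
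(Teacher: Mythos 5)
Your proof is correct and takes essentially the same route as the paper's: the identity $J(\gamma^{-1})=J(\gamma)^{-1}$ via the witness $g'=g\gamma$, and $J(\gamma)\notin Y$ by noting that $f(g)=f(g\gamma)$ would close up $l_g$, $l_{g\gamma}$ and the edge $(g,g\gamma)$ into a loop in the tree $\Psi(x)$ unless that edge came from $\Psi_0$, which is excluded since $\gamma\in\Psi_1$. Your closing reformulation via connected components of the $\Psi_0$-subgraph is a clean repackaging of the same argument.
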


\begin{proof}
Pick $\gamma \in \Psi_1$ and $g\in \calg$ with $g\gamma$ defined, and put $h=g\gamma$.
The latter assertion follows from $J(\gamma^{-1})=f(h)^{-1}f(h\gamma^{-1})=J(\gamma)^{-1}$.

To verify $J(\gamma)\not\in Y$, we suppose $f(g)=f(g\gamma)$ toward contradiction.
Since $\gamma \not\in \Psi_0$, we have neither $f_{d(g)}(g)=g\gamma$ nor $f_{d(g\gamma)}(g\gamma)=g$.
Therefore the geodesic $l_g$ does not contain $g\gamma$, and $l_{g\gamma}$ does not contain $g$.
By our assumption, $l_g$ and $l_{g\gamma}$ have the same terminus.
Thus there exists a simple loop consisting of the edge $(g, g\gamma)$, $l_g$ and $l_{g\gamma}$.
This contradicts $\Psi(r(g))$ being a tree.
\end{proof}

By the last lemma, we obtain the map $J\colon \Psi_1\to \calg|_Y\setminus Y$.

\begin{lem}\label{lem-J-inj}
The map $J\colon \Psi_1\to \calg|_Y\setminus Y$ is injective.
\end{lem}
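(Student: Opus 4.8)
The plan is to read the map $J$ geometrically inside the tree fibers $\Psi(x)$ and then reduce any coincidence $J(\gamma)=J(\gamma')$ to the elementary fact that, in a tree, two disjoint subtrees are joined by at most one edge. First I would fix the convenient representative $g=r(\gamma)$, so that $g\gamma=\gamma$ and hence $J(\gamma)=f(r(\gamma))^{-1}f(\gamma)$; write $x=r(\gamma)$ and $\delta=J(\gamma)$. The key structural observation is that, within the fiber $\Psi(x)$, the oriented edges of $\Psi_{0,+}$ organize $x\calg$ into a forest whose trees are rooted at the vertices of $x\calg Y$ (these are exactly the vertices $g$ with $d(g)=0$), with $f$ sending each vertex to the root of its tree; this is precisely the content of Remark~\ref{rem-unique-0+} together with the definition of $f$. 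Consequently, for $\gamma\in\Psi_1$, the edge $\gamma$ of $\Psi(x)$ is a cross-edge joining the tree of the base vertex $x$ (whose root is $f(x)$) to the tree of $\gamma$ (whose root is $f(\gamma)=f(x)\delta$); by Lemma~\ref{lem-J} we have $\delta\notin Y$, so these two roots differ and the two trees are genuinely distinct.

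Next, assuming $J(\gamma)=J(\gamma')=\delta$ with $x=r(\gamma)$ and $x'=r(\gamma')$, I would compare $\gamma$ and $\gamma'$, which a priori live in different fibers, by transporting the whole fiber $\Psi(x)$ into $\Psi(x')$ via left multiplication by $h:=f(x')f(x)^{-1}\in x'\calg x$. This element is well-defined and composable because $s(f(x))=r(\delta)=s(f(x'))$. Left multiplication by $h$ is a graph isomorphism $\Psi(x)\to\Psi(x')$, and since $f$ and the $f_n$ are equivariant (Lemmas~\ref{lem-d-inv} and~\ref{lem-f-eq}), it carries the forest of $\Psi(x)$ onto that of $\Psi(x')$, roots to roots. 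A short check using $f(h\cdot)=h\,f(\cdot)$ shows $f(h)=h\,f(x)=f(x')$ and $f(h\gamma)=h\,f(\gamma)=f(x')\delta$. Thus in the single tree $\Psi(x')$ both $h\gamma$ and $\gamma'$ are cross-edges joining the tree $T_1$ rooted at $f(x')$ to the distinct tree $T_2$ rooted at $f(x')\delta$, and both are oriented from $T_1$ to $T_2$.

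The decisive step is then the elementary graph-theoretic lemma: since $\Psi(x')$ is a tree, the two disjoint connected subgraphs $T_1$ and $T_2$ are joined by at most one edge (otherwise concatenating the two joining edges with paths inside $T_1$ and $T_2$ produces a nontrivial cycle), so $h\gamma$ and $\gamma'$ must coincide as oriented edges. Matching their $T_1$-endpoints gives $h=x'$, i.e.\ $h$ is a unit; comparing the source and range of $h$ then forces $x=x'$, and matching the $T_2$-endpoints yields $\gamma=h\gamma=\gamma'$, which is the desired injectivity.

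I expect the main obstacle to be bookkeeping rather than anything conceptual. One must state the uniqueness-of-joining-edge fact for $\Psi(x')$ cleanly, verify carefully that the translate $h$ really sends roots to roots and preserves the orientation $T_1\to T_2$, and make sure the identification $\delta\notin Y$ from Lemma~\ref{lem-J} is invoked so that $T_1\neq T_2$ (without which the uniqueness of the joining edge is vacuous and the argument collapses). The passage between the element-level identities in $\calg$ and the vertex/edge language of the fibers is where I would be most careful.
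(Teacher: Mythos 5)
Your proof is correct and follows essentially the same route as the paper: both arguments translate the two edges into a single fiber by left multiplication by the element $f(x')f(x)^{-1}$ (the paper's $\eta=f(h)f(g)^{-1}$) and then use the tree structure of that fiber to force the two edges to coincide, after which $h=x'$ gives $x=x'$ and $\gamma=\gamma'$. The only difference is in packaging the final tree-theoretic step: you invoke uniqueness of the bridging edge between the two disjoint $\Psi_0$-components rooted at $f(x')$ and $f(x')\delta$, while the paper concatenates $l_g$, the $\Psi_1$-edge, and $l_{g\gamma}$ into a geodesic, uses uniqueness of geodesics between fixed endpoints, and identifies the unique $\Psi_1$-edge on each geodesic.
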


\begin{proof}
Pick $\gamma, \delta \in \Psi_1$ and suppose that $f(g)^{-1}f(g\gamma)=f(h)^{-1}f(h\delta)$ for some $g, h\in \calg$ with $g\gamma$ and $h\delta$ defined.
We put $x=r(g)$, $y=r(h)$ and
\[\eta \coloneqq f(h)f(g)^{-1}=f(h\delta )f(g\gamma )^{-1}\in y\calg x.\]
Since $f(g)\neq f(g\gamma)$ by Lemma \ref{lem-J}, we have the geodesic $l_1$ in $\Psi(x)$ through the vertices $f(g), g, g\gamma, f(g\gamma)$ in this order that connects the reverse of $l_g$, the edge $(g, g\gamma )$, and $l_{g\gamma}$.
Similarly we have the geodesic $l_2$ in $\Psi(y)$ through the vertices $f(h), h, h\delta,  f(h\delta)$ in this order.
Multiplying $l_1$ by $\eta$ from the left, we obtain the geodesic $\eta l_1$ in $\Psi(y)$ through the vertices 
\[\eta f(g)=f(h),\, \eta g,\, \eta g\gamma,\, \eta f(g\gamma)=f(h\delta)\]
in this order.
The two geodesics $l_2$, $\eta l_1$ must be equal because they have the same origin and terminus.
Any two successive vertices in $l_2$ between $f(h)$ and $h$ and between $h\delta$ and $f(h\delta)$ are joined by an element of $\Psi_0$, and the two vertices $h$, $h\delta$ are joined by an element of $\Psi_1$.
A similar description for $\eta l_1$ holds, and it follows that $(h, h\delta)=(\eta g, \eta g\gamma)$.
Thus $\gamma =\delta$.
\end{proof}

\begin{prop}\label{prop-theta-tree}
The image $\Theta \coloneqq J(\Psi_1)$ is a treeing of $\calg|_Y$.
\end{prop}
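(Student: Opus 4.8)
The plan is to show that $\Theta = J(\Psi_1)$ is symmetric, that each graph $\Theta(x)$ for $x \in Y$ is connected, and that each $\Theta(x)$ is a tree. The map $f \colon \calg \to \calg Y$ retracts every vertex of $\Psi(r(g))$ onto the ``fiber'' $r(g)\calg Y$ by sliding along the oriented edges of $\Psi_{0,+}$, and $J$ records the residual edges of $\Psi_1$ after this collapse. The key structural fact I would exploit is that $f$, restricted to each $\Psi(x)$, is exactly the graph-theoretic contraction obtained by collapsing the forest $\Psi_0(x)$ (the subgraph with edge set $\Psi_0$) to its set of ``roots'' $x\calg Y$. Symmetry of $\Theta$ is immediate from Lemma~\ref{lem-J} ($J(\gamma^{-1}) = J(\gamma)^{-1}$), so the substance is connectivity and the tree property.

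**Connectivity of $\Theta(x)$.** Fix $x \in Y$. Since $\Psi$ is a treeing, $\Psi(x)$ is connected; I would take any two vertices $u, v \in x\calg Y$ and a geodesic (equivalently, the unique path in the tree $\Psi(x)$) between them, then push this path through $f$. Each edge of the path lies in $\Psi = \Psi_0 \sqcup \Psi_1$. An edge in $\Psi_0$ collapses under $f$ to a single vertex, because the endpoints of a $\Psi_{0,+}$-edge have the same image under $f$ (one is $f_n$ applied to the other, so they lie on a common geodesic $l_g$ terminating in $x\calg Y$). An edge $(g, g\gamma)$ with $\gamma \in \Psi_1$ maps to the $\Theta$-edge $J(\gamma) = f(g)^{-1} f(g\gamma)$ joining $f(g)$ and $f(g\gamma)$ in $x\calg Y$. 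Concatenating these images yields a walk in $\Theta(x)$ from $f(u) = u$ to $f(v) = v$; hence $\Theta(x)$ is connected.

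**The tree property.** For acyclicity I would argue that $\Theta(x)$ is obtained from the tree $\Psi(x)$ by contracting the subforest $\Psi_0(x)$, and that contracting a subforest of a tree yields a tree. Concretely, by Remark~\ref{rem-unique-0+} every vertex $w \in x\calg \setminus x\calg Y$ has a unique outgoing $\Psi_{0,+}$-edge, so $\Psi_0(x)$ is a forest whose components are trees each containing exactly one vertex of $x\calg Y$ (the common terminus $f(w)$); thus the $\Psi_0(x)$-components are precisely the fibers of $f|_{x\calg}$, and $f$ identifies each fiber with its unique point in $x\calg Y$. Collapsing each such subtree to that point sends the tree $\Psi(x)$ to a graph on vertex set $x\calg Y$ whose edges are exactly the images of $\Psi_1(x)$-edges, i.e.\ the $\Theta(x)$-edges. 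Since contracting connected subgraphs (here, subtrees) of a tree cannot create a cycle, $\Theta(x)$ is acyclic. Injectivity of $J$ (Lemma~\ref{lem-J-inj}) guarantees that distinct $\Psi_1$-edges give genuinely distinct $\Theta$-edges, so no spurious multi-edges arise, and a cycle in $\Theta(x)$ would lift to a cycle in $\Psi(x)$, contradicting that $\Psi(x)$ is a tree.

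**Main obstacle.** The delicate point is making the ``contract-the-forest'' picture rigorous at the level of the groupoid rather than a fixed graph: one must verify that $f|_{x\calg}$ is well defined as a retraction onto $x\calg Y$ whose fibers are exactly the $\Psi_0(x)$-components, and that lifting a hypothetical cycle in $\Theta(x)$ back through $f$ produces an actual cycle in $\Psi(x)$. This lifting is precisely the content I would model on the proof of Lemma~\ref{lem-J-inj}: given a closed reduced walk in $\Theta(x)$, each $\Theta$-edge $J(\gamma_i)$ expands to the geodesic segment $l_{g_i}^{-1}$, the $\Psi_1$-edge $(g_i, g_i\gamma_i)$, and $l_{g_i\gamma_i}$, and concatenating these along the walk yields a closed walk in $\Psi(x)$ that is non-backtracking at each $\Psi_1$-edge (since consecutive $\Theta$-edges are distinct and the collapsing only reroutes through $\Psi_0$). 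This produces a nontrivial cycle in $\Psi(x)$, the desired contradiction. Measurability of $\Theta$ is inherited from that of $J$ and $f$, so no additional Borel bookkeeping is needed beyond what Lemmas~\ref{lem-d-inv}--\ref{lem-J-inj} already supply.
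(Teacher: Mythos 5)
Your proposal is correct and follows essentially the same route as the paper: symmetry via Lemma \ref{lem-J}, connectivity by pushing a $\Psi(x)$-path through $f$ (collapsing $\Psi_0$-edges, sending $\Psi_1$-edges to $\Theta$-edges), and acyclicity by lifting a hypothetical simple loop of $\Theta(x)$ back to a loop in the tree $\Psi(x)$ via the edge-lifting statement that the paper isolates as Lemma \ref{lem-g-h}. The ``contract the forest $\Psi_0(x)$ onto its roots $x\calg Y$'' picture you add is an accurate description of what the paper's $f$ does, and your ``main obstacle'' paragraph supplies exactly the rigorous lifting argument the paper uses.
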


\begin{proof}
By Lemma \ref{lem-J}, $\Theta$ is symmetric.
We fix $x\in Y$.
First we show that the graph $\Theta(x)$ is connected.
Pick $g, h\in x\calg Y$.
Since $\Psi(x)$ is connected, we have the geodesic through the vertices $g=g_0, g_1, g_2,\ldots, g_n=h$, where any two successive vertices form a $\Psi(x)$-edge.
We then have the path in $\Theta(x)$ through the vertices $g=f(g), f(g_1), f(g_2),\ldots, f(g_n)=h$, where $(f(g_i), f(g_{i+1}))$ is a $\Theta(x)$-edge if $(g_i, g_{i+1})$ is a $\Psi_1(x)$-edge, and otherwise we have $f(g_i)=f(g_{i+1})$.
Thus $g$ and $h$ are joined by a path in $\Theta(x)$.

To show that $\Theta(x)$ is a tree, we prepare the following:

\begin{lem}\label{lem-g-h}
For every $\Theta(x)$-edge $(g_0, g_1)$, there exists a $\Psi_1(x)$-edge $(h_0, h_1)$ such that $f(h_0)=g_0$ and $f(h_1)=g_1$.
\end{lem}

\begin{proof}
We have $g_1=g_0J(\gamma)$ for some $\gamma \in \Psi_1$ and have $J(\gamma)=f(a)^{-1}f(a\gamma)$ for some or any $a\in \calg$ with $a\gamma$ defined.
The product $g_0f(a)^{-1}$ is defined. % since the sources of $g_0$ and $f(a)$ are equal to the range of $J(\gamma)$.
We put $h_0=g_0f(a)^{-1}a$ and $h_1=h_0\gamma$.
By the equivariance of $f$ (Lemma \ref{lem-f-eq}), we have $f(h_0)=g_0f(a)^{-1}f(a)=g_0$ and $f(h_1)=g_0f(a)^{-1}f(a\gamma)=g_0J(\gamma)=g_1$.
\end{proof}

\begin{rem}\label{rem-unique}
In Lemma \ref{lem-g-h}, uniqueness of the edge $(h_0, h_1)$ also holds.
This is proved via argument similar to the proof of Lemma \ref{lem-J-inj} as follows (though we do not use this in the sequel):
Let $(h_0', h_1')$ be another $\Psi_1(x)$-edge such that $f(h_0')=g_0$ and $f(h_1')=g_1$.
Then we have the closed path in $\Psi(x)$ through the vertices
\[h_0,\, f(h_0)=f(h_0'),\, h_0',\, h_1',\, f(h_1')=f(h_1),\, h_1,\, h_0\]
in this order, where for each $h\in \{ h_0, h_1, h_0', h_1'\}$, $h$ and $f(h)$ are joined by $l_h$.
In this path, $(h_0', h_1')$ and $(h_1, h_0)$ are the only $\Psi_1(x)$-edges and therefore $h_0=h_0'$ and $h_1=h_1'$.
\end{rem}

We turn to the proof of $\Theta(x)$ being a tree.
Suppose toward a contradiction that $\Theta(x)$ has a simple loop $l$ through vertices $g_0, g_1, g_2,\ldots, g_n=g_0$. 
We will construct a simple loop in $\Psi(x)$.
By Lemma \ref{lem-g-h}, there exists a $\Psi_1(x)$-edge $(h_0, h_1)$ such that $f(h_0)=g_0$ and $f(h_1)=g_1$.
By the same lemma again, for every $i=1,\ldots, n-1$, there exists a $\Psi_1(x)$-edge $(k_i, h_{i+1})$ such that $f(k_i)=g_i$ and $f(h_{i+1})=g_{i+1}$.

For every $i=1,\ldots, n-1$, the two geodesics $l_{h_i}$, $l_{k_i}$ have the same terminus $g_i$.
They consist of $\Psi_0(x)$-edges and do not contain the edges $(k_{i-1}, h_i)$, $(k_i, h_{i+1})$, where $k_0\coloneqq h_0$.
We thus have the geodesic in $\Psi(x)$ through $k_{i-1}, h_i, k_i, h_{i+1}$ in this order.
Similarly we have the geodesic in $\Psi(x)$ through $k_{n-1}, h_n, h_0, h_1$ in this order.
These geodesics form a simple loop in $\Psi(x)$, and this contradicts $\Psi(x)$ being a tree.
%Let us give one remark on the above proof.
%The uniqueness in Remark \ref{rem-unique} implies that the loop in $\Psi(x)
\end{proof}

We suppose that there is a $\sigma$-finite measure $\mu$ on $\calg^0$ preserved by $\calg$.
For a symmetric Borel subset $S \subset \calg$, we define its \textit{cost} as the value
\[C_\mu (S)=\mu_\calg^r(S)/2.\]

\begin{prop}\label{prop-theta-cost}
The isomorphism $J\colon \Psi_1\to \Theta$ preserves $\mu_\calg^r$.
Thus $C_\mu(\Theta)=C_\mu(\Psi_1)$.
\end{prop}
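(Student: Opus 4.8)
The plan is to reduce the whole statement to an explicit formula for $J$ and then to a single mass-transport identity for the ``projection to $Y$''. First I would record that the sliding map $f$ is a right translation: for $g\in\calg$ set $u(g)=g^{-1}f(g)$, so that $f(g)=g\,u(g)$ with $u(g)\in s(g)\calg Y$. By Lemma \ref{lem-f-eq} we have $f(\gamma g)=\gamma f(g)$, whence $u(\gamma g)=u(g)$ for every composable $\gamma$; thus $u(g)$ depends only on $s(g)$, and I write $u_y$ for its common value over all $g$ with $s(g)=y$. The family $U=\{\,u_y\mid y\in\calg^0\,\}$ is a Borel $r$-section with $r(U)=\calg^0$ and $s(U)\subset Y$, and $u_y=y$ for $y\in Y$. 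Substituting $f(g)=g\,u_{s(g)}$ into $J(\gamma)=f(g)^{-1}f(g\gamma)$, for any $g$ with $s(g)=r(\gamma)$, gives the key identity
\[
J(\gamma)=u_{r(\gamma)}^{-1}\,\gamma\,u_{s(\gamma)},\qquad \gamma\in\Psi_1 .
\]
In particular $r(J(\gamma))=s(u_{r(\gamma)})$, so if I define the Borel map $\pi\colon\calg^0\to Y$ by $\pi(y)=s(u_y)$, then $r\circ J=\pi\circ r$ on $\Psi_1$.

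Next I would translate ``$J$ preserves $\mu_\calg^r$'' into a statement about fibre sums. For a Borel set $A\subset\Psi_1$ put $F_A(x)=|A\cap r^{-1}(x)|$, so that $\mu_\calg^r(A)=\int_{\calg^0}F_A\,d\mu$ by the very definition of $\mu_\calg^r$. Since $J$ is injective (Lemma \ref{lem-J-inj}) and $r(J(\gamma))=\pi(r(\gamma))$, for $w\in Y$ the fibre $J(A)\cap r^{-1}(w)$ is in bijection with $\{\,\gamma\in A\mid \pi(r(\gamma))=w\,\}$, giving
\[
|J(A)\cap r^{-1}(w)|=\sum_{x\in\pi^{-1}(w)}F_A(x).
\]
As $J(A)\subset\calg|_Y$ has all ranges in $Y$, it then suffices to prove the mass-transport identity
\[
\int_Y\Big(\sum_{x\in\pi^{-1}(w)}F(x)\Big)\,d\mu(w)=\int_{\calg^0}F\,d\mu
\]
for every non-negative Borel $F$ on $\calg^0$; applied to $F=F_A$ this yields $\mu_\calg^r(J(A))=\mu_\calg^r(A)$ for all Borel $A\subset\Psi_1$, which is exactly that $J$ preserves $\mu_\calg^r$, and taking $A=\Psi_1$ gives $C_\mu(\Theta)=C_\mu(\Psi_1)$.

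The heart of the argument, and the only place where $\calg$-invariance of $\mu$ enters, is this last identity; its subtlety is that $\pi$ is \emph{not} measure-preserving pointwise, because the $r$-section $U$ is typically not a bisection, so the failure must be absorbed by the multiplicity of fibres. I would handle it by decomposing $U=\bigsqcup_k\omega_k$ into countably many Borel bisections $\omega_k$, which is possible since $r$ is injective and $s$ is countable-to-one on $U$ (the latter because $y\mapsto u_y^{-1}$ embeds $\pi^{-1}(w)$ into the countable set $w\calg$). Rewriting $\sum_{x\in\pi^{-1}(w)}F(x)=\sum_{u\in U,\,s(u)=w}F(r(u))=\sum_k F(\bar\omega_k^{-1}(w))\,\mathbf 1_{s(\omega_k)}(w)$ and integrating, the hypothesis that $\calg$ preserves $\mu$ gives $\int_{s(\omega_k)}F(\bar\omega_k^{-1}(w))\,d\mu(w)=\int_{r(\omega_k)}F\,d\mu$ for each $k$; summing and using $\sum_k\mathbf 1_{r(\omega_k)}=\mathbf 1_{\calg^0}$ (because $r(U)=\calg^0$ and $r$ is injective on $U$) recovers $\int_{\calg^0}F\,d\mu$. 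The main obstacle is precisely organizing this fibre-wise accounting so that the non-injectivity of $\pi$ is exactly compensated by the bisection decomposition of $U$; once the identity $J(\gamma)=u_{r(\gamma)}^{-1}\gamma u_{s(\gamma)}$ is in hand, everything else is bookkeeping with the invariance of $\mu$.
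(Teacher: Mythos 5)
Your argument is correct and is essentially the paper's proof: both reduce the statement to the two facts that $r(J(\gamma))=s(f(r(\gamma)))$ for $\gamma\in\Psi_1$ and that the map $x\mapsto s(f(x))$, being induced by the Borel $r$-section $f(\calg^0)$, preserves $\mu$ after a countable decomposition into bisections. The only difference is organizational --- the paper partitions $\Psi_1$ into pieces $D$ with $D$ and $J(D)$ both $r$-sections, whereas you keep $A\subset\Psi_1$ arbitrary and partition the section $U=f(\calg^0)$ instead --- and your identity $J(\gamma)=u_{r(\gamma)}^{-1}\gamma\, u_{s(\gamma)}$ merely makes explicit the step the paper states tersely as ``thus $J_1$ preserves $\mu$''.
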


\begin{proof}
By dividing $\Psi_1$ into countably many Borel subsets, it suffices to show that if $D\subset \Psi_1$ is a Borel subset such that $D$ and $J(D)$ are $r$-sections of $\calg$, then the map $J\colon D\to J(D)$ preserves $\mu_\calg^r$.
By the definition of $\mu_\calg^r$, this is equivalent to saying that the map $J_1\colon r(D)\to r(J(D))$ defined by $r(\gamma)\mapsto r(J(\gamma))$ for $\gamma \in D$ preserves $\mu$.
The map $f\colon \calg \to \calg Y$ preserves the range and satisfies $J(\gamma)=f(r(\gamma))^{-1}f(\gamma)$ for every $\gamma \in \Psi_1$.
Then $J_1(x)=s(f(x))$ for every $x\in r(D)$ and thus $J_1$ preserves $\mu$.
\end{proof}

%%%%%%%%%%%%%%%%%%%%%%%%%%%%%%%%%%%

\section{HNN extensions and treeings}\label{sec-HNN}

Throughout this section, we keep the following notation:
Let $G$ be the HNN extension
\[G=\langle \, E, \, t\mid \forall a\in E_-\ \ tat^{-1}=\tau(a)\, \rangle,\]
where $E$ is a countable group and $\tau \colon E_-\to E_+$ is an isomorphism between finite-index normal subgroups $E_-$, $E_+$ of $E$.
We set $p=[E: E_-]$ and $q=[E: E_+]$.

Let $G\c (X, \mu)$ be a p.m.p.\ action and suppose that there exist $E$-equivariant Borel maps $X\to E/E_-$ and $X\to E/E_+$.
We set $\calg =X\rtimes G$ and $\cale =X\rtimes E$.
Then $\cale \vartriangleleft \calg$ by Lemma \ref{lem-ex-nor}.
Let $(\calq, \zeta)$ be the quotient groupoid of $(\calg, \mu)$ by $\cale$ with the quotient homomorphism $\theta \colon \calg \to \calq$ and set $Z=\calq^0$.
%After replacing $X$ into its conull $G$-invariant Borel subset if necessary, we may assume that $\theta$ is a \textit{strict} homomorphism, that is, the equation
%\[\theta(x, gh)=\theta(x, g)\theta(g^{-1}x, h)\]
%holds for all $g, h\in G$ and \textit{all} $x\in X$.

Let $\theta_\pm \colon (X, \mu)\to (Z_\pm, \zeta_\pm)$ be the ergodic decomposition maps of the p.m.p.\ actions $E_\pm \c (X, \mu)$, respectively.
We have the measure-preserving maps $\sigma_\pm \colon Z_\pm \to Z$ such that $\theta =\sigma_\pm \circ \theta_\pm$.
Recall that the map $X\to \calq$, $x\mapsto \theta(x, t)$ is $E_+$-invariant by Lemma \ref{lem-eq-erg-comp0} (i) and hence induces the map $Z_+\to \calq$, which we denote by $y\mapsto \bar{\theta}(y, t)$.
Similarly the map $X\to \calq$, $x\mapsto \theta(x, t^{-1})$ induces the map $Z_-\to \calq$, $y\mapsto \bar{\theta}(y, t^{-1})$. 
We define 
\[\Phi_+\coloneqq \bar{\theta}(Z_+, t )=\bar{\theta}(Z_-, t^{-1})^{-1},\]
where the last equation follows from Lemma \ref{lem-bar-theta} (iii).

Under this setting, in Subsection \ref{subsec-treeing}, we prove that $\Phi_+$ defines an oriented treeing of $\calq$.
In Subsection \ref{subsec-induced}, we construct the Maharam extension $(\tilde{\calq}, \tilde{\zeta})$ of $(\calq, \zeta)$ and induce from this treeing of $\calq$ a treeing of a certain restriction of $\tilde{\calq}$.
We then obtain a treeing of the kernel of the Radon-Nikodym cocycle of $(\calq, \zeta)$ and compute its cost.

\subsection{Being a treeing}\label{subsec-treeing}

We start with a few preliminary observations.

\begin{lem}
The sets $\Phi_+$ and $\Phi_+^{-1}$ are disjoint.
\end{lem}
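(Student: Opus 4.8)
The plan is to argue by contradiction, reducing the measure-theoretic statement to the normal-form (Britton) property of the HNN extension. Suppose some $\gamma \in \Phi_+ \cap \Phi_+^{-1}$ exists, so that $\gamma = \bar{\theta}(y, t) = \bar{\theta}(y', t^{-1})$ for some $y \in Z_+$ and $y' \in Z_-$. Comparing ranges via Lemma \ref{lem-bar-theta} (i) and setting $z = r(\gamma)$, I get $\sigma_+(y) = \sigma_-(y') = z$. I would then pick $x \in X$ with $\theta_+(x) = y$, so that $\theta(x) = z$ and $\gamma = \theta(x, t)$. The aim is to represent the \emph{same} $\gamma$ in the form $\theta(x, a^{-1}t^{-1})$ based at this same point $x$, and then compare two elements of $x\calg$ having equal range.

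The main obstacle is exactly that the natural representative of $\bar{\theta}(y', t^{-1})$ is based at a point $x'$ with $\theta_-(x') = y'$, and a priori $x$ and $x'$ satisfy only $\theta(x) = \theta(x')$, i.e.\ they lie in the common $E$-ergodic component $\theta^{-1}(z)$ but need not lie in a single $E$-orbit. I would overcome this by a transitivity argument: since $\theta^{-1}(z)$ is $E$-ergodic and $E_-$ is a finite-index normal subgroup of $E$, the group $E$ permutes the finitely many $E_-$-ergodic components inside $\theta^{-1}(z)$, and this permutation action is transitive (the union of any one orbit of components is $E$-invariant and nonnull, hence conull by ergodicity). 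These components are precisely the fibre $\sigma_-^{-1}(z)$. As $\theta_-(x)$ and $y'$ both lie in $\sigma_-^{-1}(z)$, there is $a \in E$ with $\theta_-(ax) = y'$. Putting $x' = ax$, I then have $\gamma = \bar{\theta}(y', t^{-1}) = \theta(ax, t^{-1}) = \theta(x, a^{-1}t^{-1})$, where the last equality uses $a \in E$ exactly as in the computation in the proof of Lemma \ref{lem-eq-erg-comp0} (the unit $\theta(x, a^{-1})$ absorbs into the product).

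Now $\theta(x, t)$ and $\theta(x, a^{-1}t^{-1})$ have the same range $x$, so from their equality together with $\ker\theta = \cale$ (Theorem \ref{thm-quotient} (1)) I obtain $(x, a^{-1}t^{-1})^{-1}(x, t) \in \cale$. A short computation in $X \rtimes G$ gives $(x, a^{-1}t^{-1})^{-1}(x, t) = (tax, tat)$, which forces $tat \in E$. But $tat$ is a reduced word of $t$-length $2$ in $G$: its two occurrences of $t$ have the same sign, so no Britton pinch is possible, and hence $tat \notin E$. This contradiction yields $\Phi_+ \cap \Phi_+^{-1} = \emptyset$. I expect the transitivity step to be the only delicate point, the rest being the one-line group computation and the normal form for HNN extensions. (I would also note the shortcut available when $p \neq q$: by Proposition \ref{prop-rn} the Radon-Nikodym cocycle of $(\calq, \zeta)$ gives $\Delta(\gamma) = \bm{m}(t)$ and $\Delta(\gamma) = \bm{m}(t^{-1}) = \bm{m}(t)^{-1}$, so $\bm{m}(t)^2 = 1$ and $p = q$; the argument above has the merit of covering all $p$ and $q$.)
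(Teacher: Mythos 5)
Your proof is correct and follows essentially the same route as the paper's: both convert a hypothetical element of $\Phi_+\cap\Phi_+^{-1}$ into two representatives $\theta(x,t)$ and $\theta(x,a^{-1}t^{\mp1})$ over a common base point (using transitivity of $E$ on the $E_\pm$-ergodic components inside a single $E$-ergodic component, which you make explicit and the paper leaves implicit), and then derive a contradiction with Britton's lemma from $\ker\theta=\cale$ --- the paper matches $E_+$-components and lands on $t^{-1}at^{-1}\notin E$, where you match $E_-$-components and land on $tat\notin E$. The only caveat is that disjointness is meant up to null sets, so the opening should assume a \emph{nonnull} intersection and argue for almost every $\gamma$ in it (as the paper does via the nonnull sets $A$, $B$ and the isomorphism $f$), since the identities $\bar{\theta}(\theta_\pm(x),t^{\pm1})=\theta(x,t^{\pm1})$ only hold almost everywhere; this is a routine adjustment.
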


\begin{proof}
Otherwise there would exist nonnull Borel subsets $A\subset Z_+$ and $B\subset Z_-$ such that $\bar{\theta}(A, t)$ and $\bar{\theta}(B, t^{-1})$ have the nonnull intersection.
By replacing $A$ and $B$ into smaller nonnull subsets, we may assume that $\sigma_+$ is injective on $A$ and $\sigma_-$ is injective on $B$.
By a further replacement of $A$ and $B$ into smaller nonnull subsets, we obtain an isomorphism $f\colon A\to B$ such that $\bar{\theta}(y, t)=\bar{\theta}(f(y), t^{-1})$ for all $y\in A$ and $qf_*(\zeta_+|_A)=p\zeta_-|_B$.
Here the last equation holds because $q(\sigma_+)_*(\zeta_+|_A)=\zeta|_{\sigma_+(A)}$ and $p(\sigma_-)_*(\zeta_-|_B)=\zeta|_{\sigma_-(B)}$.

For almost every $x\in \theta_+^{-1}(A)$, the following holds:
For almost every $x'\in \theta_-^{-1}(f(\theta_+(x)))$, we have
\[\theta(x, t)=\bar{\theta}(\theta_+(x), t)=\bar{\theta}(f(\theta_+(x)), t^{-1})=\theta(x', t^{-1}).\]
The range of the both sides is $\theta(x)=\theta(x')$.
Hence there exists $a\in E$ such that $\theta_+(x)=\theta_+(ax')$ and 
\[\theta(x', t^{-1})=\theta(x, t)=\theta(ax', t)=\theta(x', a^{-1})\theta(ax', t)=\theta(x', a^{-1}t),\]
where the second equation holds by Lemma \ref{lem-eq-erg-comp0} (i).
Thus 
\[\theta(x', a^{-1}t)^{-1}\theta(x', t^{-1})=\theta(t^{-1}ax', t^{-1}at^{-1})\]
is a unit.
This contradicts $t^{-1}at^{-1}\not\in E$.
\end{proof}

We set $\Phi =\Phi_+\sqcup \Phi_+^{-1}$.
For each $z\in Z$, we have the graph $\Phi(z)$ oriented with respect to this decomposition.
More precisely for any two adjacent vertices $\gamma, \delta \in z\calq$ of $\Phi(z)$, when $\gamma^{-1}\delta\in \Phi_+$, we call the edge between $\gamma$ and $\delta$ \textit{outgoing} from $\gamma$ into $\delta$ or \textit{incoming} into $\delta$ from $\gamma$.
Fix subsets $S_\pm\subset E$ of representatives of cosets in $E/E_\pm$, respectively.

\begin{lem}\label{lem-Phi-edge}
For almost every $z\in Z$, for every vertex $\gamma \in z\calq$ of $\Phi(z)$, the following assertions hold:
\begin{enumerate}
\item[(i)] For every edge outgoing from $\gamma$, there exists a unique $u\in \sigma_+^{-1}(s(\gamma))$ such that the edge is incoming into $\gamma \bar{\theta}(u, t)$.
\item[(ii)] For every edge incoming into $\gamma$, there exists a unique $v\in \sigma_-^{-1}(s(\gamma))$ such that the edge is outgoing from $\gamma \bar{\theta}(v, t^{-1})$.
\end{enumerate} 
\end{lem}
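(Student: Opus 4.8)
The plan is to unwind the orientation conventions and then invoke the injectivity from Lemma \ref{lem-inj} together with the range and source computations of Lemma \ref{lem-bar-theta}; no new construction is needed, so the argument amounts to bookkeeping carried out almost everywhere.

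For assertion (i), I would fix an edge outgoing from $\gamma$, say into $\delta \in z\calq$. By the definition of the orientation this means $\gamma^{-1}\delta \in \Phi_+ = \bar{\theta}(Z_+, t)$, so I can write $\gamma^{-1}\delta = \bar{\theta}(u, t)$ for some $u \in Z_+$; then $\delta = \gamma \bar{\theta}(u, t)$ and, since $r(\gamma) = z$, this is again a vertex of $\Phi(z)$ into which the edge is incoming. Because the product $\gamma \bar{\theta}(u, t)$ is defined, the source of $\gamma$ must equal the range of $\bar{\theta}(u, t)$, which is $\sigma_+(u)$ by Lemma \ref{lem-bar-theta}(i); hence $u \in \sigma_+^{-1}(s(\gamma))$, giving existence. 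For uniqueness, if $\gamma \bar{\theta}(u, t) = \gamma \bar{\theta}(u', t)$ with $u, u' \in \sigma_+^{-1}(s(\gamma))$, then left-cancelling $\gamma$ yields $\bar{\theta}(u, t) = \bar{\theta}(u', t)$, and injectivity of $y \mapsto \bar{\theta}(y, t)$ (the first assertion of Lemma \ref{lem-inj}) forces $u = u'$.

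For assertion (ii) I would argue symmetrically, taking care that an incoming edge corresponds to the reversed orientation. An edge incoming into $\gamma$ from $\delta$ means $\delta^{-1}\gamma \in \Phi_+$, equivalently $\gamma^{-1}\delta \in \Phi_+^{-1}$. Since $\Phi_+ = \bar{\theta}(Z_-, t^{-1})^{-1}$, we have $\Phi_+^{-1} = \bar{\theta}(Z_-, t^{-1})$, so $\gamma^{-1}\delta = \bar{\theta}(v, t^{-1})$ for some $v \in Z_-$ and $\delta = \gamma \bar{\theta}(v, t^{-1})$. Definedness of this product together with the range computation of Lemma \ref{lem-bar-theta}(ii) gives $s(\gamma) = r(\bar{\theta}(v, t^{-1})) = \sigma_-(v)$, i.e.\ $v \in \sigma_-^{-1}(s(\gamma))$, and uniqueness follows once more by left-cancelling $\gamma$ and applying the second assertion of Lemma \ref{lem-inj}.

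Finally, since Lemmas \ref{lem-inj} and \ref{lem-bar-theta} each hold after discarding a null set and only countably many such almost-everywhere identities enter, both assertions hold simultaneously for almost every $z \in Z$ and every vertex $\gamma \in z\calq$. The only point requiring genuine care is matching the incoming/outgoing convention with membership in $\Phi_+$ versus $\Phi_+^{-1}$, and thereby selecting $\bar{\theta}(\cdot, t)$ versus $\bar{\theta}(\cdot, t^{-1})$; once that is pinned down, existence is a direct reading of the definitions and uniqueness is immediate from injectivity, so I anticipate no substantive obstacle.
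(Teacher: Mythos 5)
Your proposal is correct and follows exactly the paper's argument: the paper's proof is the one-line remark that existence follows from Lemma \ref{lem-bar-theta} and uniqueness from Lemma \ref{lem-inj}, and your write-up simply spells out the same bookkeeping with the orientation convention. No discrepancies.
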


\begin{proof}
The existence and uniqueness follow from Lemmas \ref{lem-bar-theta} and \ref{lem-inj}, respectively.
\end{proof}

\begin{lem}\label{lem-Phi-graphing}
The set $\Phi$ is a graphing of $\calq$.
\end{lem}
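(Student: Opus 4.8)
The plan is to verify the two defining properties of a graphing separately. Symmetry is immediate from the very definition $\Phi = \Phi_+ \sqcup \Phi_+^{-1}$, so the whole content lies in showing that the graph $\Phi(z)$ is connected for almost every $z \in Z$. I would also record at the outset that $\Phi \subset \calq \setminus \calq^0$: since $\ker \theta = \cale$ and $t \notin E$, the element $(x,t)$ does not lie in $\cale$, so $\theta(x, t) = \bar{\theta}(\theta_+(x), t)$ is never a unit, and no element of $\Phi_+$ (hence none of $\Phi_+^{-1}$) is a unit.

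For connectedness the key reduction is that, because $\theta$ is class-surjective (Theorem \ref{thm-quotient}(2)), for almost every $x$ with $z = \theta(x)$ every vertex of $\Phi(z)$, i.e.\ every element of $z\calq$, is of the form $\theta(x, g)$ for some $g \in G$. The unit $z$ is itself the vertex $\theta(x, e)$, so it suffices to connect $z$ to $\theta(x, g)$ for each $g$. The computation to exploit is the identity $\theta(x, g)^{-1} \theta(x, gw) = \theta(g^{-1}x, w)$, valid for all $g, w \in G$, which I would verify directly from the groupoid operations on $\calg = X \rtimes G$.

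First I would write $g = w_1 \cdots w_k$ with each letter $w_i \in E \cup \{t, t^{-1}\}$, using that $E$ and $t$ generate $G$, and set $g_i = w_1 \cdots w_i$ and $\gamma_i = \theta(x, g_i)$, so that $\gamma_0 = z$ and $\gamma_k = \theta(x, g)$. The identity above gives $\gamma_{i-1}^{-1}\gamma_i = \theta(g_{i-1}^{-1}x, w_i)$, and I would then split into three cases: if $w_i \in E$ this lies in $\cale = \ker \theta$, so $\gamma_{i-1} = \gamma_i$; if $w_i = t$ it equals $\bar{\theta}(\theta_+(g_{i-1}^{-1}x), t) \in \Phi_+$; and if $w_i = t^{-1}$ it equals $\bar{\theta}(\theta_-(g_{i-1}^{-1}x), t^{-1}) \in \Phi_+^{-1}$. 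In the latter two cases $\gamma_{i-1}$ and $\gamma_i$ are joined by an edge of $\Phi(z)$, so $\gamma_0, \ldots, \gamma_k$ traces a walk (with possible repetitions) from $z$ to $\theta(x, g)$, which proves connectedness.

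The one genuinely technical point, and the place where care is needed, is the measure-theoretic bookkeeping: the memberships $\theta(\cdot, t) \in \Phi_+$ and $\theta(\cdot, t^{-1}) \in \Phi_+^{-1}$, as well as the coincidence $\bar{\theta}(\theta_+(x), t) = \theta(x, t)$, each hold only almost everywhere. I would handle this by fixing, once and for all, a word for each of the countably many $g \in G$ and intersecting over all these words and their prefixes — together with the conull set on which class-surjectivity holds, and using that each $x \mapsto g_{i-1}^{-1}x$ preserves $\mu$ — to obtain a single conull $X_0 \subset X$ on which the argument runs for every $g$ simultaneously. Since $\theta_*\mu = \zeta$, the image $\theta(X_0)$ is conull in $Z$, and for every $z \in \theta(X_0)$ the graph $\Phi(z)$ is connected. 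No deeper obstacle arises; the conceptual core is simply that connectedness of the fibres is forced by $G = \langle E, t\rangle$ through class-surjectivity.
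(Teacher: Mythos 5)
Your proposal is correct and follows essentially the same route as the paper: both reduce via class-surjectivity to showing that each $\theta(x,g)$ is a $\Phi$-word, and both use the identity $\theta(x,g_{i-1})^{-1}\theta(x,g_i)=\theta(g_{i-1}^{-1}x,w_i)$ together with $\theta(\cdot,t^{\pm1})=\bar{\theta}(\theta_\pm(\cdot),t^{\pm1})\in\Phi_+^{\pm1}$ to produce the connecting path (the paper phrases this as an induction on the number of $t^{\pm1}$-letters rather than as a walk, which is only a cosmetic difference). Your added remarks on $\Phi\cap\calq^0=\emptyset$ and the null-set bookkeeping over the countably many words are sound and consistent with the paper's conventions.
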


\begin{proof}
By the class-surjectivity of $\theta$, it suffices to show that for almost every $x\in X$ and every $g\in G$, $\theta(x, g)$ is written as a $\Phi$-word, i.e., a product of finitely many elements of $\Phi$.
Since $\theta(x, ga)=\theta(x, g)\theta(g^{-1}x, a)=\theta(x, g)$ for every $a\in E$, it suffices to show this for $g$ of the form $g=a_1t^{\ve_1}a_2t^{\ve_2}\cdots a_nt^{\ve_n}$ with $a_i\in E$ and $\ve_i \in \{ \pm 1\}$.

We show this by induction on $n$.
When $n=1$, we have
\[\theta(x, a_1t^{\ve_1})=\theta(x, a_1)\theta(a_1^{-1}x, t^{\ve_1})=\theta(a_1^{-1}x, t^{\ve_1}),\]
which is $\bar{\theta}(\theta_+(a_1^{-1}x), t)$ if $\ve_1=1$, and which is $\bar{\theta}(\theta_-(a_1^{-1}x), t^{-1})$ if $\ve_1=-1$.
It follows that $\theta(x, a_1t^{\ve_1})$ belongs to $\Phi$.

For general $g$, we have
\[\theta(x, g)=\theta(x, g_{n-1})\theta(g_{n-1}^{-1}x, a_nt^{\ve_n}),\]
where $g_{n-1}\coloneqq a_1t^{\ve_1}\cdots a_{n-1}t^{\ve_{n-1}}$.
By the induction hypothesis, $\theta(x, g_{n-1})$ is written as a $\Phi$-word.
We have
\[\theta(g_{n-1}^{-1}x, a_nt^{\ve_n})=\theta(g_{n-1}^{-1}x, a_n)\theta(a_n^{-1}g_{n-1}^{-1}x, t^{\ve_n})=\theta(a_n^{-1}g_{n-1}^{-1}x, t^{\ve_n}),\]
which is $\bar{\theta}(\theta_+(a_n^{-1}g_{n-1}^{-1}x), t)$ if $\ve_n=1$, and which is $\bar{\theta}(\theta_-(a_n^{-1}g_{n-1}^{-1}x), t^{-1})$ if $\ve_n=-1$.
Thus $\theta(x, g)$ is a $\Phi$-word.
\end{proof}

\begin{lem}\label{lem-word}
For $i=1,\ldots, n$, let $A_i\subset Z_+$ be a Borel subset such that $\varphi_i\coloneqq \bar{\theta}(A_i, t)$ belongs to $\llbracket \calq \rrbracket$.
Let $\varphi =\varphi_1^{\ve_1}\cdots \varphi_n^{\ve_n}$ with $\ve_i\in \{ \pm 1\}$.
Then for almost every $x\in \theta^{-1}(r(\varphi))$, there exist $a_i\in S_{\ve_i}$ (where $S_1\coloneqq S_+$ and $S_{-1}\coloneqq S_-$) such that
\begin{enumerate}
\item[(1)] we have $\theta(x)\varphi =\theta(x, a_1t^{\ve_1}a_2t^{\ve_2}\cdots a_nt^{\ve_n})$ and
\item[(2)] for every $k=1,\ldots, n$,
\[s(\theta(x)\varphi_1^{\ve_1}\cdots \varphi_{k-1}^{\ve_{k-1}})\varphi_k^{\ve_k}=\theta(g_{k-1}^{-1}x, a_kt^{\ve_k}),\]
where $g_k\coloneqq a_1t^{\ve_1}a_2t^{\ve_2}\cdots a_kt^{\ve_k}$ and $g_0\coloneqq e$.
\end{enumerate}
\end{lem}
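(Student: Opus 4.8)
The plan is to induct on $n$. The base case $n=0$ is trivial: the empty product $\varphi$ is the identity bisection $\calq^0$, $g_0=e$, and both assertions hold vacuously. For the inductive step I would factor $\varphi=\varphi'\varphi_n^{\ve_n}$ with $\varphi'=\varphi_1^{\ve_1}\cdots\varphi_{n-1}^{\ve_{n-1}}$ and note $r(\varphi)\subset r(\varphi')$, so that the inductive hypothesis applies to almost every $x\in\theta^{-1}(r(\varphi))$. It supplies $a_1,\dots,a_{n-1}$ with $a_i\in S_{\ve_i}$, the identity $\theta(x)\varphi'=\theta(x,g_{n-1})$, and assertion (2) for $k=1,\dots,n-1$; since each $g_k$ depends only on the first $k$ letters, nothing changes when passing from $\varphi'$ to $\varphi$, and it remains to produce $a_n\in S_{\ve_n}$ and to check assertions (1) and (2) for $k=n$.

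For the step I would put $w=g_{n-1}^{-1}x$, so that $\theta(w)=s(\theta(x,g_{n-1}))=s(\theta(x)\varphi')$ lies in $r(\varphi_n^{\ve_n})$ because $\theta(x)\in r(\varphi)$. Take $\ve_n=1$ first. Since $\varphi_n=\bar\theta(A_n,t)$ is a bisection, $\sigma_+$ is injective on $A_n$ (otherwise two points of $A_n$ over the same point of $Z$ would give, by Lemma~\ref{lem-inj}, two distinct elements of $\varphi_n$ with the same range), so there is a unique $y\in A_n$ with $\sigma_+(y)=\theta(w)$ and $\theta(w)\varphi_n=\bar\theta(y,t)$. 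Both $y$ and the basepoint $\theta_+(w)$ lie in the fiber $\sigma_+^{-1}(\theta(w))$, which has cardinality $[E:E_+]$ and, by ergodicity of the $E$-component, is a single free $E/E_+$-orbit; I would then extract the unique $a_n\in S_+$ with $y=a_n^{-1}\theta_+(w)=\theta_+(a_n^{-1}w)$. Injectivity of $\bar\theta(\,\cdot\,,t)$ now yields
\[
\theta(w)\varphi_n=\bar\theta(\theta_+(a_n^{-1}w),t)=\theta(a_n^{-1}w,t)=\theta(w,a_nt),
\]
the last step because $\theta(w,a_n)$ is a unit; this is assertion (2) for $k=n$. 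The case $\ve_n=-1$ is symmetric: using $\varphi_n^{-1}=\bar\theta(t^{-1}A_n,t^{-1})$ from Lemma~\ref{lem-bar-theta}(iii), I would run the same argument with $\sigma_-$, $\theta_-$ and $S_-$ in place of $\sigma_+$, $\theta_+$ and $S_+$, obtaining $a_n\in S_-$ with $\theta(w)\varphi_n^{-1}=\theta(w,a_nt^{-1})$.

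It then remains to assemble the factors in $\calg$. Writing $\theta(x)\varphi=(\theta(x)\varphi')(\theta(w)\varphi_n^{\ve_n})=\theta(x,g_{n-1})\,\theta(g_{n-1}^{-1}x,a_nt^{\ve_n})$ and computing $(x,g_{n-1})(g_{n-1}^{-1}x,a_nt^{\ve_n})=(x,g_{n-1}a_nt^{\ve_n})=(x,g_n)$ in $X\rtimes G$, the fact that $\theta$ is a homomorphism gives $\theta(x)\varphi=\theta(x,g_n)$, which is assertion (1). I expect the one delicate point to be the selection of $a_n$: one must know that $\sigma_\pm^{-1}(\theta(w))$ is exactly a free $E/E_\pm$-orbit, so that $y$ and $\theta_\pm(w)$ differ by a unique element of $S_\pm$, and here the normality of $E_\pm$ in $E$ is what guarantees that $a\mapsto a^{-1}\theta_\pm(w)$ still parameterizes the fiber by $S_\pm$. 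The remaining identifications and the telescoping of the word in $G$ are routine.
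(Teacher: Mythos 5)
Your proof is correct and follows essentially the same route as the paper: induction on $n$, peeling off the last factor, identifying $\theta(w)\varphi_n^{\ve_n}$ with $\bar\theta(y,t)^{\pm 1}$ for the unique $y$ in the appropriate $\sigma_\pm$-fiber, and extracting $a_n\in S_{\ve_n}$ from the free transitive $E/E_\pm$-action on that fiber. The only cosmetic differences are that the paper deduces (1) from (2) at the outset rather than at the end and starts the induction at $n=1$.
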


\begin{proof}
First note that condition (1) follows from condition (2).
Indeed by condition (2),
\begin{align*}
\theta(x)\varphi_1^{\ve_1}\varphi_2^{\ve_2}\cdots \varphi_n^{\ve_n} = \theta(x, a_1t^{\ve_1})\theta(g_1^{-1}x, a_2t^{\ve_2})\cdots \theta(g_{n-1}^{-1}x, a_nt^{\ve_n})=\theta(x, g_n).
\end{align*}

We verify the existence of $a_i$ satisfying condition (2) by induction on $n$.
If $\ve_1=1$, then $\theta(x)\varphi_1 =\bar{\theta}(y, t)$ for some $y\in A_1$ and there exists a unique $a_1\in S_+$ such that $\theta_+(x)=a_1y$ and hence $\theta_+(a_1^{-1}x)=y$.
Thus
\[\theta(x)\varphi_1 =\bar{\theta}(y, t)=\theta(a_1^{-1}x, t)=\theta(x, a_1)\theta(a_1^{-1}x, t)=\theta(x, a_1t)\]
and the proof completes.

If $\ve_1=-1$, then $\theta(x)\varphi_1^{-1} =\bar{\theta}(y, t)^{-1}$ for some $y\in A_1$ and there exists a unique $a_1\in S_-$ such that $\theta_-(x)=a_1t^{-1}y$ and hence $\theta_-(a_1^{-1}x)=t^{-1}y$.
Thus
\[\theta(x)\varphi_1^{-1} =\bar{\theta}(y, t)^{-1}=\bar{\theta}(t^{-1}y, t^{-1})=\theta(a_1^{-1}x, t^{-1})=\theta(x, a_1)\theta(a_1^{-1}x, t^{-1})=\theta(x, a_1t^{-1})\]
and the proof completes.

We verify the existence of $a_i$ for general $\varphi =\varphi_1^{\ve_1}\cdots \varphi_n^{\ve_n}$.
By the induction hypothesis, for almost every $x\in \theta^{-1}(r(\varphi))$, there exists $a_i\in S_{\ve_i}$ with $i=1,\ldots, n-1$ such that for every $k=1,\ldots, n-1$,
\[s(\theta(x)\varphi_1^{\ve_1}\cdots \varphi_{k-1}^{\ve_{k-1}})\varphi_k^{\ve_k} =\theta(g_{k-1}^{-1}x, a_kt^{\ve_k}).\]
Then $\theta(x)\varphi =\theta(x, g_{n-1})\varphi_n^{\ve_n}$.
The rest of the proof is similar to that for the case of $n=1$:
If $\ve_n=1$, then $\theta(x, g_{n-1})\varphi_n=\theta(x, g_{n-1})\bar{\theta}(y, t)$ for some $y\in A_n$ and there exists a unique $a_n\in S_+$ such that $\theta_+(g_{n-1}^{-1}x)=a_ny$ and hence $\theta_+(a_n^{-1}g_{n-1}^{-1}x)=y$.
Thus
\begin{align*}
&\theta(x, g_{n-1})\varphi_n=\theta(x, g_{n-1})\bar{\theta}(y, t)=\theta(x, g_{n-1})\theta(a_n^{-1}g_{n-1}^{-1}x, t)\\
&=\theta(x, g_{n-1})\theta(g_{n-1}^{-1}x, a_n)\theta(a_n^{-1}g_{n-1}^{-1}x, t)=\theta(x, g_{n-1}a_nt)
\end{align*}
and the proof completes.

If $\ve_n=-1$, then $\theta(x, g_{n-1})\varphi_n^{-1}=\theta(x, g_{n-1})\bar{\theta}(y, t)^{-1}$ for some $y\in A_n$ and there exists a unique $a_n\in S_-$ such that $\theta_-(g_{n-1}^{-1}x)=a_nt^{-1}y$ and hence $\theta_-(a_n^{-1}g_{n-1}^{-1}x)=t^{-1}y$.
Thus
\begin{align*}
&\theta(x, g_{n-1})\varphi_n^{-1}=\theta(x, g_{n-1})\bar{\theta}(y, t)^{-1}=\theta(x, g_{n-1})\bar{\theta}(t^{-1}y, t^{-1})\\
&=\theta(x, g_{n-1})\theta(a_n^{-1}g_{n-1}^{-1}x, t^{-1})=\theta(x, g_{n-1})\theta(g_{n-1}^{-1}x, a_n)\theta(a_n^{-1}g_{n-1}^{-1}x, t^{-1})\\
&=\theta(x, g_{n-1}a_nt^{-1})
\end{align*}
and the proof completes.
\end{proof}

Recall that the HNN extension $G$ admits the Bass-Serre tree $T$ such that its vertex set is $G/E$ and for every $g\in G$, the two vertices $gE$, $gtE$ are joined by an oriented edge outgoing from $gE$ into $gtE$ (\cite[I.5]{serre}).
This is used in the proof of the following:

\begin{lem}\label{lem-Phi-treeing}
As in Lemma \ref{lem-word}, for $i=1,\ldots, n$, let $A_i\subset Z_+$ be a Borel subset such that $\varphi_i\coloneqq \bar{\theta}(A_i, t)$ belongs to $\llbracket \calq \rrbracket$.
Let $\varphi =\varphi_1^{\ve_1}\cdots \varphi_n^{\ve_n}$ with $\ve_i\in \{ \pm 1\}$.
Suppose that $r(\varphi)$ is nonnull and for every $z\in r(\varphi)$, we have $z\varphi =z$.
Then there exist $i\in \{ 1,\ldots, n-1\}$ and a nonnull Borel subset $A\subset Z$ such that $z\varphi_i^{\ve_i}\varphi_{i+1}^{\ve_{i+1}}=z$ for all $z\in A$.
 \end{lem}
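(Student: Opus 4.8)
The plan is to reduce the statement to the normal form theorem for HNN extensions (Britton's lemma), which I read off geometrically from the Bass-Serre tree $T$. First I would apply Lemma \ref{lem-word} to $\varphi =\varphi_1^{\ve_1}\cdots \varphi_n^{\ve_n}$: for almost every $x\in \theta^{-1}(r(\varphi))$ it produces coset representatives $a_i\in S_{\ve_i}$, depending measurably on $x$, with $\theta(x)\varphi =\theta(x, g_n)$ where $g_n=a_1t^{\ve_1}\cdots a_nt^{\ve_n}$, together with the stepwise factorization in part (2) of that lemma. The hypothesis $z\varphi =z$ for $z\in r(\varphi)$ says that $\theta(x, g_n)$ is a unit, i.e.\ $(x, g_n)\in \ker \theta =\cale =X\rtimes E$, which forces $g_n\in E$.

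Next I would feed $g_n\in E$ into the Bass-Serre tree. The element $g_n$ traces the edge-path in $T$ through the vertices $g_0E, g_1E,\ldots, g_nE$, where $g_k=a_1t^{\ve_1}\cdots a_kt^{\ve_k}$ and $g_0=e$; consecutive vertices are distinct and adjacent because $g_{k-1}^{-1}g_k=a_kt^{\ve_k}\not\in E$. Since $g_n\in E$ we have $g_nE=E=g_0E$, so the path is a closed loop in the tree $T$ and must backtrack: there is an index $i\in \{1,\ldots, n-1\}$ with $g_{i-1}E=g_{i+1}E$, equivalently $a_it^{\ve_i}a_{i+1}t^{\ve_{i+1}}\in E$, equivalently (as $a_i\in E$) $t^{\ve_i}a_{i+1}t^{\ve_{i+1}}\in E$. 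This is exactly a pinch, forcing $\ve_{i+1}=-\ve_i$ together with $a_{i+1}\in E_-$ when $\ve_i=1$ and $a_{i+1}\in E_+$ when $\ve_i=-1$. Since the backtracking index ranges over the finite set $\{1,\ldots, n-1\}$ and the sets $X_i=\{\, x\mid a_i(x)t^{\ve_i}a_{i+1}(x)t^{\ve_{i+1}}\in E\,\}$ are Borel (membership being a coset condition on the measurable functions $a_i, a_{i+1}$) and cover $\theta^{-1}(r(\varphi))$ up to a null set, some $X_i$ is nonnull; fix such an $i$ and put $X_0=X_i$.

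Finally I would transport the pinch back to the quotient. Set $\psi =\varphi_1^{\ve_1}\cdots \varphi_{i-1}^{\ve_{i-1}}\in \llbracket \calq \rrbracket$ and $A=\bar{\psi}(\theta(X_0))$, which is nonnull since $\theta_*\mu =\zeta$ and $\bar{\psi}$ is a measure-class-preserving partial isomorphism. For $x\in X_0$ put $w=g_{i-1}^{-1}x$, so that $z\coloneqq \theta(w)=\bar{\psi}(\theta(x))\in A$. Lemma \ref{lem-word}(2) at steps $i$ and $i+1$ gives $\theta(w)\varphi_i^{\ve_i}=\theta(w, a_it^{\ve_i})$ and $s(\theta(w, a_it^{\ve_i}))\varphi_{i+1}^{\ve_{i+1}}=\theta((a_it^{\ve_i})^{-1}w, a_{i+1}t^{\ve_{i+1}})$, so that composing yields
\[\theta(w)\varphi_i^{\ve_i}\varphi_{i+1}^{\ve_{i+1}}=\theta(w, a_it^{\ve_i}a_{i+1}t^{\ve_{i+1}}).\]
Since $a_it^{\ve_i}a_{i+1}t^{\ve_{i+1}}\in E$ by the pinch, the right-hand side lies in $\cale =\ker \theta$ and is a unit, so $z\varphi_i^{\ve_i}\varphi_{i+1}^{\ve_{i+1}}=z$ for every $z\in A$, as required.

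The conceptual content sits entirely in the backtracking step, so the main obstacle is bookkeeping rather than a new idea: extracting the two-letter factorization at positions $i, i+1$ from Lemma \ref{lem-word} in usable form, and passing from the $x$-dependent backtracking index to a single index valid on a nonnull set by the finite pigeonhole. I would take care that the coset representatives $a_i$ are genuine measurable functions of $x$, which is built into the construction in Lemma \ref{lem-word}, so that the sets $X_i$ and the image $A$ are indeed Borel and nonnull.
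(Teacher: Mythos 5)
Your proof is correct and follows essentially the same route as the paper's: apply Lemma \ref{lem-word} to get $\theta(x)\varphi=\theta(x,a_1t^{\ve_1}\cdots a_nt^{\ve_n})$, deduce $a_1t^{\ve_1}\cdots a_nt^{\ve_n}\in E$, read off a backtracking $a_it^{\ve_i}a_{i+1}t^{\ve_{i+1}}\in E$ from the closed edge path in the Bass-Serre tree, and push the resulting unit $\theta(g_{i-1}^{-1}x, a_it^{\ve_i}a_{i+1}t^{\ve_{i+1}})$ back through condition (2) of Lemma \ref{lem-word}. The only (harmless) difference is that you pigeonhole on the backtracking index while letting the $a_i$ vary measurably, whereas the paper first restricts to a nonnull set $D$ on which the word $a_1t^{\ve_1}\cdots a_nt^{\ve_n}$ is constant; your explicit transport $A=\bar{\psi}(\theta(X_0))$ is if anything slightly more careful than the paper's final step.
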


\begin{proof}
By condition (1) in Lemma \ref{lem-word}, there exist a nonnull Borel subset $D\subset \theta^{-1}(r(\varphi))$ and $a_i\in S_{\ve_i}$ such that for every $x\in D$, we have
\[\theta(x)\varphi =\theta(x, a_1t^{\ve_1}a_2t^{\ve_2}\cdots a_nt^{\ve_n}),\]
which is $\theta(x)$ by our assumption.
It follows that $a_1t^{\ve_1}a_2t^{\ve_2}\cdots a_nt^{\ve_n}\in E$.
The sequence
\[E,\ a_1t^{\ve_1}E,\ a_1t^{\ve_1}a_2t^{\ve_2}E,\ \ldots,\ a_1t^{\ve_1}a_2t^{\ve_2}\cdots a_nt^{\ve_n}E=E\]
forms a cycle in the Bass-Serre tree $T$, which has to contain a backtracking.
There exists $i\in \{ 1,\ldots, n-1\}$ such that $a_{i}t^{\ve_{i}}a_{i+1}t^{\ve_{i+1}}\in E$.
By condition (2) in Lemma \ref{lem-word}, for almost every $x\in D$,
\begin{align*}
s(\theta(x)\varphi_1^{\ve_1}\cdots \varphi_{i-1}^{\ve_{i-1}})\varphi_{i}^{\ve_{i}}\varphi_{i+1}^{\ve_{i+1}}&=\theta(g_{i-1}^{-1}x, a_{i}t^{\ve_i})\theta(g_{i}^{-1}x, a_{i+1}t^{\ve_{i+1}})\\
&=\theta(g_{i-1}^{-1}x, a_{i}t^{\ve_{i}}a_{i+1}t^{\ve_{i+1}}),
\end{align*}
where $g_k\coloneqq a_1t^{\ve_1}a_2t^{\ve_2}\cdots a_kt^{\ve_k}$ and $g_0\coloneqq e$.
Since $a_{i}t^{\ve_{i}}a_{i+1}t^{\ve_{i+1}}\in E$, the right hand side is a unit.
Let $A\subset Z$ be a Borel subset such that $\theta^{-1}(A)$ is equal to the saturation $[D]_\cale$.
Then $A$ is nonnull and for almost every $z\in A$, $s(z\varphi_1^{\ve_1}\cdots \varphi_{i-1}^{\ve_{i-1}})\varphi_{i}^{\ve_{i}}\varphi_{i+1}^{\ve_{i+1}}$ is a unit.
\end{proof}

By Lemmas \ref{lem-Phi-graphing} and \ref{lem-Phi-treeing}, we obtain:

\begin{cor}
The set $\Phi$ is a treeing of $\calq$.
\end{cor}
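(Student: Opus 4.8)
The plan is to combine the two preceding lemmas, which together assert that $\Phi$ is a graphing every one of whose relations must backtrack. Recall that $\Phi = \Phi_+ \sqcup \Phi_+^{-1}$ is symmetric, so by Lemma \ref{lem-Phi-graphing} it is a graphing: the graph $\Phi(z)$ is connected for almost every $z \in Z$. Since, by the remark in the definition of a graphing, the set of $z$ for which $\Phi(z)$ is a tree is Borel, it remains only to show that $\Phi(z)$ contains no cycle for almost every $z$. I would argue by contradiction, assuming that the Borel set $N$ of those $z$ for which $\Phi(z)$ is not a tree is nonnull.

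Next I would extract, in a measurable way, a closed reduced walk from a cycle. Using Lemma \ref{lem-section} I first write $\Phi_+$ as a countable disjoint union of bisections of the form $\bar{\theta}(A_i, t) \in \llbracket \calq \rrbracket$. For $z \in N$ the connected graph $\Phi(z)$ contains a cycle; choosing one of minimal length and translating it by the inverse of one of its vertices (the left multiplication $g \mapsto \gamma g$ carries $\Phi(s(\gamma))$ isomorphically onto $\Phi(r(\gamma))$), I may assume the cycle passes through the unit at its basepoint. A Luzin--Novikov measurable selection, followed by restriction to a nonnull Borel set on which the length $n \geq 3$ and the sequence of pieces and orientations are constant, then produces bisections $\varphi_i = \bar{\theta}(A_i, t)$ and signs $\ve_i \in \{\pm 1\}$ such that $\varphi = \varphi_1^{\ve_1} \cdots \varphi_n^{\ve_n}$ has nonnull range $r(\varphi)$ and satisfies $z\varphi = z$ for every $z \in r(\varphi)$. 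Because the chosen cycle is simple, the associated walk $z = v_0, v_1, \dots, v_n = z$ with $v_k = z\varphi_1^{\ve_1}\cdots\varphi_k^{\ve_k}$ visits pairwise distinct vertices $v_0, \dots, v_{n-1}$, so no interior backtracking occurs.

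Finally I would invoke Lemma \ref{lem-Phi-treeing} for this $\varphi$. It yields an index $i \in \{1, \dots, n-1\}$ and a nonnull Borel set $A \subset r(\varphi)$ on which $s(z\varphi_1^{\ve_1}\cdots\varphi_{i-1}^{\ve_{i-1}})\varphi_i^{\ve_i}\varphi_{i+1}^{\ve_{i+1}}$ is a unit; equivalently $v_{i+1} = v_{i-1}$ on $A$. For $n \geq 3$ and $1 \leq i \leq n-1$ this is a repeated vertex, contradicting the simplicity of the cycle. Hence $N$ is null, $\Phi(z)$ is a tree for almost every $z$, and $\Phi$ is a treeing of $\calq$.

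The one genuinely technical point is the middle step: realizing a cycle as a fixed-length word in the specific pieces $\bar{\theta}(A_i, t)$ demanded by Lemma \ref{lem-Phi-treeing}, in a Borel-measurable fashion and based at the unit. I expect this to be routine rather than hard, since $z\calq$ is countable and all the graphs involved are Borel, so Luzin--Novikov uniformization applies; everything after that is the purely formal comparison of the simplicity of the cycle against the backtracking forced by Lemma \ref{lem-Phi-treeing}.
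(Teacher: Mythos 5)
Your proposal is correct and is exactly the paper's (essentially unwritten) argument: the paper derives the corollary directly by combining Lemma \ref{lem-Phi-graphing} (connectivity) with Lemma \ref{lem-Phi-treeing} (every null-homotopic word backtracks), and your middle step --- translating a minimal cycle to the basepoint, using Lemma \ref{lem-section} to decompose $\Phi_+$ into bisections $\bar{\theta}(A_i,t)$, and applying Luzin--Novikov to fix the combinatorial type on a nonnull set --- is precisely the routine bookkeeping the paper leaves implicit. The final contradiction between the backtracking $v_{i-1}=v_{i+1}$ and the simplicity of a cycle of length $n\geq 3$ is sound.
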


%%%%%%%%%%%%%%%%%%%%%%%%%%%%%%%%%%%

\subsection{Maharam extensions and induced treeings}\label{subsec-induced}

Let $\bm{m}\colon G\to \Q^*_+$ be the modular homomorphism associated to the quasi-normal subgroup $E$ of $G$.
Let $\Delta \colon \calq \to \R^*_+$ be the Radon-Nikodym cocycle of $(\calq, \zeta)$.
By Proposition \ref{prop-rn}, $\Delta(\theta(x, t))=\bm{m}(t)=q/p$ for almost every $x\in \calg^0$.
We define $L$ as the subgroup $(q/p)^\Z$ of $\R^*_+$, which is the range of $\Delta$.
Let $L$ act on $L$ by multiplication (from the left), and let $\tilde{L}\coloneqq L\rtimes L$ be the associated translation groupoid.
Its product is given by $(l, u)(u^{-1}l, v)=(l, uv)$ for $l, u, v\in L$.

The product set $\calq \times \tilde{L}$ is regarded as the groupoid on $\calq^0\times L$ such that the range and source of $(g, \gamma)\in \calq \times \tilde{L}$ are $(r(g), r(\gamma))$ and $(s(g), s(\gamma))$, respectively, the product is given by $(g, \gamma)(h, \delta)=(gh, \gamma \delta)$ (when both the products $gh$ and $\gamma \delta$ are defined), and the inverse is given by $(g, \gamma)^{-1}=(g^{-1}, \gamma^{-1})$.

We now define the \textit{Maharam extension} $(\tilde{\calq}, \tilde{\zeta})$ of $(\calq, \zeta)$.
Let $\tilde{\calq}$ be the subgroupoid of $\calq \times \tilde{L}$ defined by
\[\tilde{\calq}= \{ \, (g, (l, \Delta(g)))\in \calq \times \tilde{L}\mid g\in \calq,\, l\in L\, \}.\]
We endow $\tilde{L}^0=L$ with the measure $\eta$ such that the point $(q/p)^n$ has measure $(q/p)^{-n}$ for each $n\in \Z$.
We set $W=\tilde{\calq}^0=\calq^0\times L$ and endow $W$ with the measure $\tilde{\zeta}\coloneqq \zeta \times \eta$.
Then $\tilde{\calq}$ preserves $\tilde{\zeta}$.

In the previous subsection, we constructed the treeing $\Phi =\Phi_+\sqcup \Phi_+^{-1}$ of $\calq$.
We set
\[\tilde{\Phi}_+=\{ \, (g, (l, \Delta(g)))\mid g\in \Phi_+,\, l\in L\, \}\]
and set $\tilde{\Phi}=\tilde{\Phi}_+\sqcup \tilde{\Phi}_+^{-1}$.

\begin{lem}\label{lem-tilde-Phi-treeing}
The set $\tilde{\Phi}$ is a treeing of $\tilde{\calq}$.
Moreover the following assertions hold:
\begin{enumerate}
\item[(i)] For all $w\in W$, the set $\tilde{\Phi}_+\cap w\tilde{\calq}$ consists of exactly $q$ elements.
\item[(ii)] For all $w\in W$, the set $\tilde{\Phi}_+^{-1}\cap w\tilde{\calq}$ consists of exactly $p$ elements.
\end{enumerate}
\end{lem}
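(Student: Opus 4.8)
The plan is to exploit the fact that the Maharam extension $\tilde\calq$ is, over each fiber, a faithful copy of $\calq$, and to transport the tree structure of $\Phi(z)$ across this identification. First I would consider the projection $\pi\colon \tilde\calq \to \calq$, $(g, (l, \Delta(g)))\mapsto g$, which is a groupoid homomorphism. Fix $w=(z, l)\in W=Z\times L$. An element of $\tilde\calq$ has range $w$ exactly when its first coordinate $g$ satisfies $r(g)=z$ and its $L$-coordinate has range $l$; since the $\tilde L$-component of any element of $\tilde\calq$ is forced by definition to equal $(l, \Delta(g))$, the map $\pi$ restricts to a bijection
\[\pi_w\colon w\tilde\calq \to z\calq,\qquad (g, (l, \Delta(g)))\mapsto g.\]
This rigidity, namely that the $\tilde L$-coordinate is completely determined by $g$ and $l$, is the key structural point.

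Next I would check that $\pi_w$ is an isomorphism of graphs $\tilde\Phi(w)\to \Phi(z)$. Given $g, h\in z\calq$, the product $(g,(l,\Delta(g)))^{-1}(h,(l,\Delta(h)))$ has first coordinate $g^{-1}h$ and $\tilde L$-coordinate $(\Delta(g)^{-1}l,\ \Delta(g)^{-1}\Delta(h))$. The cocycle identity of Proposition \ref{prop-fm} gives $\Delta(g)^{-1}\Delta(h)=\Delta(g^{-1}h)$, so this product lands exactly in the prescribed form $(\cdot,\Delta(g^{-1}h))$ defining $\tilde\Phi_+$; hence it lies in $\tilde\Phi_+$ iff $g^{-1}h\in \Phi_+$, and, applying the same to the inverse, it lies in $\tilde\Phi$ iff $g^{-1}h\in \Phi$. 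Thus two vertices of $\tilde\Phi(w)$ are adjacent iff their images are adjacent in $\Phi(z)$. Since $\Phi$ is a treeing of $\calq$, each $\Phi(z)$ is a tree, so each $\tilde\Phi(w)$ is a tree; as $\tilde\Phi=\tilde\Phi_+\sqcup\tilde\Phi_+^{-1}$ is symmetric and its elements have first coordinate in $\Phi_+\cup\Phi_+^{-1}\subset \calq\setminus\calq^0$ (so they are not units), $\tilde\Phi$ is a treeing of $\tilde\calq$.

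Finally, for assertions (i) and (ii) I would count via $\pi_w$. The set $\tilde\Phi_+\cap w\tilde\calq$ corresponds under $\pi_w$ to $\Phi_+\cap z\calq=\{\,\bar\theta(y,t)\mid \sigma_+(y)=z\,\}$; by injectivity of the map $y\mapsto \bar\theta(y,t)$ (Lemma \ref{lem-inj}) together with $r(\bar\theta(y,t))=\sigma_+(y)$, this is in bijection with $\sigma_+^{-1}(z)$, whose cardinality is $[E:E_+]=q$ for almost every $z$. Similarly $\tilde\Phi_+^{-1}\cap w\tilde\calq$ corresponds to $\Phi_+^{-1}\cap z\calq$, which, using $\Phi_+^{-1}=\bar\theta(Z_-,t^{-1})$ and $r(\bar\theta(y,t^{-1}))=\sigma_-(y)$, is in bijection with $\sigma_-^{-1}(z)$ of cardinality $[E:E_-]=p$.

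I expect the only genuine obstacle to be the bookkeeping inside $\tilde L=L\rtimes L$: one must verify carefully that the $L$-coordinates compose according to the rule $(l, u)(u^{-1}l, v)=(l, uv)$ and that the cocycle relation makes the $\tilde L$-part of $g^{-1}h$ land precisely in the form required for membership in $\tilde\Phi_+$. Once this is confirmed, the entire statement reduces to the already-established tree structure of $\Phi$ and the fiber-cardinality facts for $\sigma_\pm$.
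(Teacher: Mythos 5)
Your proposal is correct and follows essentially the same route as the paper: the paper's proof uses the bijection $z\calq\to (z,l)\tilde{\calq}$, $g\mapsto (g,(l,\Delta(g)))$ (the inverse of your $\pi_w$) to transport the tree structure of $\Phi(z)$, and then counts the sets in (i) and (ii) by citing Lemma \ref{lem-Phi-edge}, whose proof is exactly your combination of Lemma \ref{lem-inj} with the fiber cardinalities of $\sigma_\pm$.
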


\begin{proof}
For all $z\in \calq^0$ and $l\in L$, we have the bijection $z\calq \to (z, l)\tilde{\calq}$, $g\mapsto (g, (l, \Delta(g)))$, which induces a graph isomorphism from $\Phi(z)$ onto $\tilde{\Phi}(z, l)$.
The former assertion of the lemma follows.
For assertion (i), it suffices to show that for all $z\in \calq^0$, the set $\Phi_+\cap z\calq$ consists of exactly $q$ elements.
This follows from Lemma \ref{lem-Phi-edge} (i).
Similarly assertion (ii) follows from Lemma \ref{lem-Phi-edge} (ii).
\end{proof}

We set $N= \ker \bm{m}$ and $\caln = X\rtimes N$.
Then $\cale \vartriangleleft \caln$ (by Corollary \ref{cor-shg}), and the quotient groupoid $\caln /\cale$ is naturally identified with a subgroupoid of $\calq =\calg /\cale$ (by the universal property (3) in Theorem \ref{thm-quotient}). 
Moreover $\caln /\cale =\ker \Delta$, and $\caln /\cale$ is identified with the restriction $\tilde{\calq}|_{\calq^0\times \{ 1\}}$ (by ignoring the coordinate in $\tilde{L}$).
Applying the induction discussed in Section \ref{sec-treeing} to the treeing $\tilde{\Phi}$, we will obtain a treeing of $\tilde{\calq}|_{\calq^0\times \{ 1\}}\simeq \caln /\cale$.

If $p=q$, then $N=G$ and $L=\{ 1\}$.
Therefore the induction is needless.
In the rest of this subsection (until Theorem \ref{thm-cost-infty}), we suppose $p\neq q$.

For $n\in \Z$, we set $W_n=\calq^0\times \{ (q/p)^n\}\subset W$.
Write $\Phi =\bigsqcup_{k=1}^\infty \varphi_k$ as the disjoint union of $\varphi_k\in \llbracket \calq \rrbracket$ such that for every odd $k$, $\varphi_k\subset \Phi_+$ and $\varphi_{k+1}=\varphi_k^{-1}$.
Define $\tilde{\varphi}_k\subset \tilde{\Phi}$ by
\[\tilde{\varphi}_k=\{ \, (g, (l, \Delta(g)))\mid g\in \varphi_k,\, l\in L\, \}.\]
Then $\tilde{\Phi}=\bigsqcup_{k=1}^\infty \tilde{\varphi}_k$.
We define a function $d\colon \tilde{\calq}\to \N \cup \{ 0\}$ by
\[d(\gamma )=\text{dist}_{\tilde{\Phi}(r(\gamma))}(\gamma, r(\gamma)\tilde{\calq}W_0)\]
for $\gamma \in \tilde{\calq}$, where for $w\in W$, $\text{dist}_{\tilde{\Phi}(w)}$ is the graph metric on $\tilde{\Phi}(w)$.
For $n\in \N$, we define a map $f_n\colon \{ d=n \} \to \{ d=n-1\}$ as in Section \ref{sec-treeing}.
That is, for $\gamma \in \{ d=n\}$, we choose the minimal $k\in \N$ such that $s(\gamma)\in r(\tilde{\varphi}_k)$ and $\gamma \tilde{\varphi}_k\in \{ d=n-1\}$, and define $f_n(\gamma)= \gamma \tilde{\varphi}_k$.
Finally we define the sets
\[\tilde{\Phi}_{0, +}=\{ \, \gamma^{-1}f_n(\gamma)\mid \gamma \in \{ d=n\},\, n\in \N \, \}\ \ \text{and}\ \ \tilde{\Phi}_0=\tilde{\Phi}_{0, +}\cup \tilde{\Phi}_{0, +}^{-1}.\]
To compute the cost of $\tilde{\Phi}_1\coloneqq \tilde{\Phi}\setminus \tilde{\Phi}_0$, we need the following:

\begin{lem}\label{lem-wn}
With the above notation, for all $w\in W$ and $n\in \N \cup \{ 0\}$, we have
\[w\tilde{\calq}\cap \{ d=n \} =w\tilde{\calq}W_n\cup w\tilde{\calq}W_{-n}.\]
Moreover the following assertions hold:
\begin{enumerate}
\item[(i)] For all $n\in \N$ and $w\in W_n$, the set $\tilde{\Phi}_+\cap w\tilde{\calq}$ consists of exactly $q$ elements, and exactly one of them is an element of $\tilde{\Phi}_{0, +}$.
\item[(ii)] For all $n\in \N$ and $w\in W_{-n}$, the set $\tilde{\Phi}_+^{-1}\cap w\tilde{\calq}$ consists of exactly $p$ elements, and exactly one of them is an element of $\tilde{\Phi}_{0, +}$.
\end{enumerate}
\end{lem}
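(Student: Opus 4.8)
The plan is to introduce, for each $\gamma \in \tilde{\calq}$, the integer $\ell(\gamma) \in \Z$ determined by $s(\gamma) \in W_{\ell(\gamma)}$, and to prove that $d(\gamma) = |\ell(\gamma)|$. The first displayed equation is then just a reformulation of this, since $w\tilde{\calq}W_n = \{\, \gamma \in w\tilde{\calq} \mid \ell(\gamma) = n\,\}$ and $W_n \cap W_{-n} = \emptyset$ for $n \geq 1$. The point is that $\ell$ is a height function on each tree $\tilde{\Phi}(w)$. First I would record how $\ell$ changes along an edge: if $e \in \tilde{\Phi}_+$ has range $r(e) = s(\gamma) \in W_m$, its $\calq$-component lies in $\Phi_+$ and hence has $\Delta$-value $q/p$, so by the definition of the product in $\calq \times \tilde{L}$ the $L$-coordinate of $s(e)$ is obtained from that of $r(e)$ by multiplying by $(q/p)^{-1}$, whence $s(e) \in W_{m-1}$. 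Thus for $\delta = \gamma e$ with $e = \gamma^{-1}\delta \in \tilde{\Phi}_+$ we get $\ell(\delta) = \ell(\gamma) - 1$, and dually $e \in \tilde{\Phi}_+^{-1}$ gives $\ell(\delta) = \ell(\gamma) + 1$. So every $\tilde{\Phi}(w)$-edge changes $\ell$ by exactly $\pm 1$, with outgoing edges lowering $\ell$ and incoming edges raising it.

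Given this, I would obtain $d(\gamma) = |\ell(\gamma)|$ from two matching bounds. For the lower bound, any path in $\tilde{\Phi}(w)$ from $\gamma$ to a vertex of $w\tilde{\calq}W_0$ has length at least $|\ell(\gamma)|$, because $\ell$ changes by at most $1$ along each edge and vanishes at the endpoint. For the upper bound, I would exhibit a monotone path of length $|\ell(\gamma)|$: by Lemma \ref{lem-tilde-Phi-treeing}, applied at the source of each vertex (and transported to that vertex by the left-translation isomorphism, as in the proof of assertions (i), (ii) there), every vertex has exactly $q \geq 1$ outgoing and $p \geq 1$ incoming edges, so from a vertex of positive level one may always descend by an outgoing edge and from one of negative level always ascend by an incoming edge, reaching level $0$ in $|\ell(\gamma)|$ steps.

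For the two ``moreover'' assertions, the cardinalities $q$ and $p$ are immediate from Lemma \ref{lem-tilde-Phi-treeing} (i) and (ii). It remains to locate the element of $\tilde{\Phi}_{0,+}$. By Remark \ref{rem-unique-0+} applied to the induction data $(\tilde{\calq}, \tilde{\Phi}, W_0)$, for each $w \in W \setminus W_0$ there is a unique element of $\tilde{\Phi}_{0,+}$ with range $w$; taking $g$ to be the unit at $w$, it equals $g^{-1}f_{d(g)}(g)$, i.e.\ the first edge of the $f$-descent from $w$ toward level $0$. When $w \in W_n$ with $n \geq 1$ this first edge lowers $\ell$ (from $n$ to $n-1$) and is therefore outgoing, hence lies in $\tilde{\Phi}_+$; when $w \in W_{-n}$ it raises $\ell$ and lies in $\tilde{\Phi}_+^{-1}$. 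In each case exactly one of the $q$ (resp.\ $p$) relevant edges at $w$ is this $\tilde{\Phi}_{0,+}$-edge, which is the claim.

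I expect the main obstacle to be purely bookkeeping: fixing the sign conventions so that the $\tilde{L}$-coordinate of the source of an edge is computed correctly from $\Delta$ and the product in $\calq \times \tilde{L}$, and confirming that the degree count of Lemma \ref{lem-tilde-Phi-treeing} transports from the root $w$ to an arbitrary vertex $\gamma$ via the left-translation isomorphism $s(\gamma)\tilde{\calq} \to w\tilde{\calq}$. Once the height function and its $\pm 1$ increments are correctly set up, both the distance computation and the identification of the $\tilde{\Phi}_{0,+}$-edge are routine.
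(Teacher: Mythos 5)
Your proof is correct and follows essentially the same route as the paper: the paper also identifies $d(\gamma)$ with the absolute value of the exponent $n$ in the $\tilde{L}^0$-coordinate $(q/p)^n$ of $s(\gamma)$, obtaining the upper bound by descending one level at a time along a $\tilde{\varphi}_k^{\pm 1}$ (an induction on $|n|$) and the lower bound by noting that the exponent changes by exactly $\pm 1$ along each edge, so any word reaching $W_0$ has length at least $|n|$. The identification of the unique $\tilde{\Phi}_{0,+}$-element via Remark \ref{rem-unique-0+} together with the sign of the level change, and the cardinalities from Lemma \ref{lem-tilde-Phi-treeing}, likewise match the paper's argument.
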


\begin{proof}
Pick $\gamma =(h, (l, \Delta(h)))\in \tilde{\calq}$ and let $n$ be the integer such that $(q/p)^n=\Delta(h)^{-1}l$.
The equation in the lemma is equivalent to saying that $d(\gamma)=|n|$, which we will verify by induction on $|n|$.
Note that $\Delta(h)^{-1}l$ is the $\tilde{L}^0$-coordinate of $s(\gamma)$.
If $n=0$, then the desired equation holds.

Suppose $n>0$.
Let $k$ be the minimal odd number such that $s(h)\in r(\varphi_k)$. 
Such $k$ exists because if we pick $y\in Z_+$ with $\sigma_+(y)=s(h)$, then $\bar{\theta}(y, t)\in \Phi_+$ and hence $\bar{\theta}(y, t)\in \varphi_k$ for some odd $k$ with $r(\bar{\theta}(y, t))=s(h)$.
The product $\gamma \tilde{\varphi}_k$ is defined, and its $\tilde{L}$-coordinate is
\[(l, \Delta(h))(\Delta(h)^{-1}l, q/p)=(l, \Delta(h)q/p).\]
By the induction hypothesis, $d(\gamma \tilde{\varphi}_k)=n-1$.
Hence $d(\gamma)\leq n$.

Put $m= d(\gamma)$.
There exist odd $k_1,\ldots, k_m$ and $\ve_1,\ldots, \ve_m\in \{ \pm 1\}$ such that the product $\gamma \tilde{\varphi}_{k_1}^{\ve_1}\cdots \tilde{\varphi}_{k_m}^{\ve_m}$ is defined and is in $r(\gamma)\tilde{\calq}W_0$.
The product has the source whose coordinate in $\tilde{L}^0$ is
\[[\Delta(h)(q/p)^{\ve_1+\cdots +\ve_m}]^{-1}l=(q/p)^{n-(\ve_1+\cdots +\ve_m)},\]
which has to be $1$.
Therefore $m\geq n$ and $m=n$.
Moreover the minimality of $k$ implies that $f_n(\gamma)=\gamma \tilde{\varphi}_k$ and hence $s(\gamma)\tilde{\varphi}_k\in \tilde{\Phi}_{0, +}$.

The proof for the case of $n<0$ is similar, given as follows:
Choose the minimal odd $k$ such that $s(h)\in s(\varphi_k)$.
Then by the induction hypothesis, we have $d(\gamma \tilde{\varphi}_k^{-1})=-n-1$, and the equation $d(\gamma)=-n$ follows similarly to the above.
Moreover $f_n(\gamma)=\gamma \tilde{\varphi}_k^{-1}$ and hence $s(\gamma)\tilde{\varphi}_k^{-1}\in \tilde{\Phi}_{0, +}$.
In particular the equation in the lemma was proved.

To prove assertion (i), pick $n\in \N$, $w\in W_n$ and an arbitrary $\gamma =(h, (l, \Delta(h)))\in \tilde{\calq}w$.
Then $\Delta(h)^{-1}l=(q/p)^n$.
Let $k$ be the minimal odd number such that $s(h)\in r(\varphi_k)$.
Then the conclusion in the third paragraph of this proof says that $s(\gamma)\tilde{\varphi}_k=w\tilde{\varphi}_k\in \tilde{\Phi}_{0, +}$.
Assertion (i) now follows from Remark \ref{rem-unique-0+} and Lemma \ref{lem-tilde-Phi-treeing} (i).

Assertion (ii) is verified similarly as follows:
Pick $n\in \N$, $w\in W_{-n}$ and an arbitrary $\gamma =(h, (l, \Delta(h)))\in \tilde{\calq}w$.
Then $\Delta(h)^{-1}l=(q/p)^{-n}$.
Let $k$ be the minimal odd number such that $s(h)\in s(\varphi_k)$.
By the conclusion in the fourth paragraph, $s(\gamma)\tilde{\varphi}_k^{-1}=w\tilde{\varphi}_k^{-1}\in \tilde{\Phi}_{0, +}$, and assertion (ii) follows from Remark \ref{rem-unique-0+} and Lemma \ref{lem-tilde-Phi-treeing} (ii).
\end{proof}

Let $n\in \N$.
For $w\in W_n$, let $\delta_w\in w\tilde{\calq}\cap \tilde{\Phi}_{0, +}$ be the unique element in Lemma \ref{lem-wn} (i).
Similarly for $w\in W_{-n}$, let $\delta_w\in w\tilde{\calq}\cap \tilde{\Phi}_{0, +}$ be the unique element in Lemma \ref{lem-wn} (ii).
By Remark \ref{rem-unique-0+}, the equation $\tilde{\Phi}_{0, +}=\{ \, \delta_w\mid w\in W\setminus W_0\, \}$ holds.
It follows from Lemma \ref{lem-wn} (i), (ii) that the cost of $\tilde{\Phi}_1=\tilde{\Phi}\setminus \tilde{\Phi}_0$ is 
\begin{align*}
C(\tilde{\Phi}_1)&=(q-1)\sum_{n=1}^\infty \tilde{\zeta}(W_n)+(p-1)\sum_{n=1}^\infty \tilde{\zeta}(W_{-n})\\
&=(q-1)\sum_{n=1}^\infty (q/p)^{-n}+(p-1)\sum_{n=1}^\infty (q/p)^n,
\end{align*}
which is $\infty$ if $p\neq 1$ and $q\neq 1$.
Note that otherwise the right hand side is $1$ (since we are assuming $p\neq q$).

Following Section \ref{sec-treeing}, we define a map $f\colon \tilde{\calq} \to \tilde{\calq}W_0$ by
\[f(\gamma)=\begin{cases}
\gamma & \text{if}\ \gamma \in \tilde{\calq}W_0,\\
(f_1\circ f_2\circ \cdots \circ f_n)(\gamma) & \text{if}\ \gamma \in \{  d=n \} \ \text{and}\ n\in \N.
\end{cases}\] 
For $\gamma \in \tilde{\Phi}_1$, we define $J(\gamma)\in \tilde{\calq}|_{W_0}$ by $J(\gamma)=f(r(\gamma))^{-1}f(\gamma)$.
By Propositions \ref{prop-theta-tree} and \ref{prop-theta-cost}, the set $\Theta \coloneqq J(\tilde{\Phi}_1)$ is a treeing of $\tilde{\calq}|_{W_0}$ and its cost is $C(\Theta)=C(\tilde{\Phi}_1)$.
As a conclusion, we obtain the following:

\begin{thm}\label{thm-cost-infty}
Let $G$ be the HNN extension
\[G=\langle \, E, \, t\mid \forall a\in E_-\ \ tat^{-1}=\tau(a)\, \rangle,\]
where $E$ is a countable group and $\tau \colon E_-\to E_+$ is an isomorphism between finite-index normal subgroups $E_-$, $E_+$ of $E$.
We set $p=[E: E_-]$ and $q=[E: E_+]$ and suppose that $p\neq 1$, $q\neq 1$ and $p\neq q$.
Let $\bm{m}\colon G\to \Q^*_+$ be the modular homomorphism associated to $E$ and set $N=\ker \bm{m}$.

Let $G\c (X, \mu)$ be a p.m.p.\ action and suppose that there exist $E$-equivariant Borel maps $X\to E/E_-$ and $X\to E/E_+$.
We set $\caln =X\rtimes N$ and $\cale =X\rtimes E$.
Then $\cale \vartriangleleft \caln$ and the quotient groupoid of $(\caln, \mu)$ by $\cale$ is p.m.p.\ and admits a treeing of cost $\infty$.
\end{thm}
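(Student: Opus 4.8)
The plan is to assemble the constructions and computations already carried out in this subsection, since the substantive work is in place. First I would dispose of the normality claim. By Lemma \ref{lem-ex-nor}, applied to the HNN generator $t$, we have $\cale \vartriangleleft \calg$, where $\calg = X\rtimes G$; since $\cale < \caln < \calg$, Corollary \ref{cor-shg} immediately gives $\cale \vartriangleleft \caln$. This step is purely formal.

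Next I would identify the quotient $\caln/\cale$. By Proposition \ref{prop-rn} the Radon--Nikodym cocycle of $(\calq,\zeta)$ satisfies $\Delta(\theta(x,g)) = \bm{m}(g)$, so $\theta(x,g)\in\ker\Delta$ precisely when $g\in\ker\bm{m} = N$; hence the image $\theta(\caln)$ equals $\ker\Delta$. Since $\ker(\theta|_{\caln}) = \cale$, the universal property in Theorem \ref{thm-quotient} identifies $\caln/\cale$ with $\ker\Delta$, which is in turn identified with the level set $\tilde{\calq}|_{W_0}$ of the Maharam extension (forgetting the $\tilde{L}$-coordinate, as $W_0 = \calq^0\times\{1\}$). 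For the p.m.p.\ assertion I would note that $\Delta\equiv 1$ on $\ker\Delta$, so by Proposition \ref{prop-fm} every $\phi\in\llbracket \ker\Delta \rrbracket$ has $d\bar{\phi}_*\zeta/d\zeta = 1$; equivalently, $\tilde{\calq}$ preserves $\tilde{\zeta} = \zeta\times\eta$, and its restriction to $W_0$ preserves $\tilde{\zeta}|_{W_0} = \zeta$.

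For the treeing I would invoke the induction of Section \ref{sec-treeing} applied to the oriented treeing $\tilde{\Phi}$ of $\tilde{\calq}$ (Lemma \ref{lem-tilde-Phi-treeing}) with $Y = W_0$. Proposition \ref{prop-theta-tree} then yields the treeing $\Theta = J(\tilde{\Phi}_1)$ of $\tilde{\calq}|_{W_0}\cong\caln/\cale$, and Proposition \ref{prop-theta-cost} gives $C(\Theta) = C(\tilde{\Phi}_1)$. The cost of $\tilde{\Phi}_1$ is read off from the fibrewise count in Lemma \ref{lem-wn}: over each $W_n$ with $n\geq 1$ exactly $q-1$ of the $q$ outgoing edges survive into $\tilde{\Phi}_1$, and over each $W_{-n}$ exactly $p-1$ of the $p$ incoming edges survive, so
\[
C(\tilde{\Phi}_1) = (q-1)\sum_{n=1}^\infty (q/p)^{-n} + (p-1)\sum_{n=1}^\infty (q/p)^n .
\]
Because $p\neq 1$, $q\neq 1$ and $p\neq q$, one of these geometric series diverges (the second if $q>p$, the first if $q<p$), so $C(\Theta) = \infty$.

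The genuine content lies entirely in the earlier constructions---building $\tilde{\Phi}$ and proving it is a treeing, and above all the distance count of Lemma \ref{lem-wn} that drives the cost. Within the final assembly the one point demanding care is the identification of $\caln/\cale$ with exactly the level set $\tilde{\calq}|_{W_0}$, so that the induced treeing and its cost are computed on the correct groupoid and against the measure $\zeta$. Once that identification is secured, the divergence of the cost, and hence the theorem, is immediate.
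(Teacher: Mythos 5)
Your proposal is correct and follows essentially the same route as the paper, which presents Theorem \ref{thm-cost-infty} precisely as the assembly of Lemma \ref{lem-ex-nor} with Corollary \ref{cor-shg} for normality, the identification $\caln/\cale=\ker\Delta\simeq\tilde{\calq}|_{W_0}$ via Proposition \ref{prop-rn} and the universal property, and the induced treeing $\Theta=J(\tilde{\Phi}_1)$ with the cost count of Lemma \ref{lem-wn}. Your direct derivation of the p.m.p.\ property from Proposition \ref{prop-fm} (rather than via $\tilde{\calq}$ preserving $\tilde{\zeta}$) is a cosmetic variation only.
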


\begin{rem}
If $p=q$, then $(\calq, \zeta)$ and $(\tilde{\calq}, \tilde{\zeta})$ are identified and p.m.p., and the treeing $\Phi$ of $\calq$ has the cost $C(\Phi)=p=q$ by Lemma \ref{lem-tilde-Phi-treeing}.
\end{rem}

%%%%%%%%%%%%%%%%%%%%%%%%%%%%%%%%%%%

\section{Splitting of groupoid-extensions}\label{sec-splitting}

With the notation in Theorem \ref{thm-cost-infty}, assuming that $E$ is finitely generated, free abelian, we show that $N$ is orbit equivalent to $F_\infty \times \Z$.
In Subsection \ref{subsec-general}, we show this splitting result in a general framework in order to clarify what is necessary for the splitting.
In Subsection \ref{subsec-app}, we apply it for proving Theorem \ref{thm-main}.

\subsection{A general splitting result}\label{subsec-general}

%for later use toward GBS groups.

For a Borel action $G\c X$ of a countable group on a standard Borel space, we denote its orbit equivalence relation by
\[\calr(G\c X)=\{ \, (x, g^{-1}x)\mid x\in X,\, g\in G\, \}.\]

\begin{lem}\label{lem-split-hyp}
Let $H$ be a countable group and $E$ a normal subgroup of $H$.
Let $H\c X$ be a Borel action on a standard Borel space such that $E$ acts on $X$ trivially.
Set $\calh =X\rtimes H$ and $\calr =\calr (H/E \c X)$.
Let $q\colon \calh \to \calr$ be the quotient map defined by $(x, h)\mapsto (x, h^{-1}x)$ for $x\in X$ and $h\in H$.
Suppose that $\calr$ is hyperfinite.
Then the following exact sequence of groupoids splits:
\[1\to X\times E\to \calh \stackrel{q}{\to} \calr \to 1.\]
Namely, there exists a Borel homomorphism $\sigma \colon \calr \to \calh$ which is a section of $q$.
\end{lem}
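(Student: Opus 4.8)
The plan is to reduce the statement to the construction of a single Borel cocycle and then to assemble that cocycle by exhausting $\calr$ with finite subrelations. Giving a Borel section $\sigma \colon \calr \to \calh$ of $q$ is the same as producing a Borel map $\rho \colon \calr \to H$ with $\rho(x, y)^{-1}x=y$ for all $(x, y)\in \calr$ and $\rho(x, y)\rho(y, z)=\rho(x, z)$ whenever $(x, y), (y, z)\in \calr$: one sets $\sigma(x, y)=(x, \rho(x, y))$, the first property yields $q\circ \sigma =\mathrm{id}$, and the second yields that $\sigma$ is a homomorphism (the source of $(x, \rho(x, y))$ is $\rho(x, y)^{-1}x=y$, so the relevant products are defined). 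Since $E$ acts trivially, any $h\in H$ with $h^{-1}x=y$ is an admissible value of $\rho(x, y)$, and such an $h$ exists for every $(x, y)\in \calr$ because $\calr =\calr(H/E\c X)$.

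Next I would exploit hyperfiniteness to write $\calr =\bigcup_n \calr_n$ as an increasing union of finite Borel equivalence relations. Fixing a Borel linear order on $X$ and letting $c_n(x)$ be the least element of the finite $\calr_n$-class of $x$, the transversals $T_n=c_n(X)$ are automatically nested, $T_{n+1}\subset T_n$, with $c_{n+1}(c_n(x))=c_{n+1}(x)$ since $c_n(x)$ and $x$ lie in the same $\calr_{n+1}$-class. On each level I would choose, by the Luzin--Novikov selection theorem, a Borel map $g_n\colon X\to H$ with $g_n(x)x=c_n(x)$, and set $\rho_n(x, y)=g_n(x)^{-1}g_n(y)$ for $(x, y)\in \calr_n$. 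This $\rho_n$ is multiplicative by construction, and since $c_n(x)=c_n(y)$ on each $\calr_n$-class one has $\rho_n(x, y)^{-1}x=g_n(y)^{-1}c_n(y)=y$, so $\rho_n$ is a cocycle of the required type on $\calr_n$.

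The crux is to make the $\rho_n$ compatible so that their union is a cocycle on all of $\calr$. Given $g_n$, I would select a Borel map $k\colon T_n\to H$ with $k(t)t=c_{n+1}(t)$ and put $g_{n+1}(x)=k(c_n(x))g_n(x)$; then $g_{n+1}(x)x=k(c_n(x))c_n(x)=c_{n+1}(c_n(x))=c_{n+1}(x)$ as required. For $(x, y)\in \calr_n$ the factors $k(c_n(x))$ and $k(c_n(y))$ coincide, whence $\rho_{n+1}(x, y)=g_n(x)^{-1}g_n(y)=\rho_n(x, y)$. Thus $\rho_m$ restricts to $\rho_n$ on $\calr_n$ for every $m\ge n$, and $\rho\coloneqq \bigcup_n \rho_n$ is a well-defined Borel cocycle on $\calr$ with $\rho(x, y)^{-1}x=y$, giving the section $\sigma(x, y)=(x, \rho(x, y))$.

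The main obstacle is exactly this interlevel compatibility: an arbitrary choice of per-level cocycles need not glue, and it is hyperfiniteness---the nested finite structure together with the transversals $T_n$---that allows one to telescope the lifts $g_n$ coherently. By contrast, the measurable selections of $g_n$ and $k$ are routine applications of Luzin--Novikov. The hypothesis that $\calr$ is hyperfinite is used solely to furnish the exhaustion $\calr=\bigcup_n \calr_n$ along which the cocycle is built up.
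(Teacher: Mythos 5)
Your proof is correct and follows essentially the same strategy as the paper: exhaust $\calr$ by an increasing union of finite subrelations with nested transversals, lift the point-to-transversal maps to $\calh$ level by level, and telescope the lifts so that consecutive levels agree. Recasting the section as a cocycle $\rho\colon \calr\to H$ with $\rho(x,y)=g_n(x)^{-1}g_n(y)$ is just a reformulation of the paper's construction of nested subgroupoids $\calh_n$ generated by lifted partial isomorphisms, and if anything it makes the verification of multiplicativity more transparent.
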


\begin{proof}
Write $\calr =\bigcup_n \calr_n$ as the increasing union of finite subequivalence relations $\calr_n$.
We inductively find an increasing sequence of subgroupoids $\calh_n<\calh$ which are sections of $\calr_n$.

Choose a transversal $F_n\subset X$ of $\calr_n$ such that $F_{n+1}\subset F_n$, where a \textit{transversal} of $\calr_n$ is a Borel subset of $X$ which meets each $\calr_n$-equivalence class in exactly one point.
Write $F_1\calr_1=\bigsqcup_k\phi_k$ as the union of countably many $\phi_k\in \llbracket \calr_1\rrbracket$.
Choose $\psi_k\in \llbracket \calh \rrbracket$ such that $q(\psi_k)=\phi_k$, and let $\calh_1$ be the subgroupoid generated by $(\psi_k)_k$.
Then $\calh_1$ is isomorphic to $\calr_1$ via $q$.

Suppose that we have constructed subgroupoids $\calh_1<\cdots < \calh_n$.
Write $F_{n+1}\calr_{n+1}F_n=\bigsqcup_k\phi_k'$ as the union of countably many $\phi_k'\in \llbracket \calr_{n+1} \rrbracket$.
Choose $\psi_k'\in \llbracket \calh \rrbracket$ such that $q(\psi_k')=\phi_k'$, and let $\calh_{n+1}$ be the subgroupoid generated by $(\psi_k')_k$ and $\calh_n$.
Then $\calh_{n+1}$ is isomorphic to $\calr_{n+1}$ via $q$.

The union $\bigcup_n \calh_n$ is a subgroupoid of $\calh$ isomorphic to $\calr$ via $q$.
\end{proof}

In the rest of this subsection, we keep the following:

\begin{notation}\label{not-split}
Let $\varphi \colon N\to H$ be a surjective homomorphism between countable groups.
Let $E<N$ be a subgroup and suppose that $\varphi$ is injective on $E$ and the image $\varphi(E)$ is central in $H$.
We identify $E$ with $\varphi(E)$ via $\varphi$.
Suppose that $H/E$ is amenable.

Let $H\c (X, \mu)$ be a p.m.p.\ action such that $E$ acts on $X$ trivially and $H/E$ acts on $X$ freely.
We set $\calh =X\rtimes H$ and $\calr =\calr (H/E\c X)$.
By Lemma \ref{lem-split-hyp}, there exists a Borel homomorphism $\sigma \colon \calr \to \calh$ which is a section of the quotient map $\calh \to \calr$.
We define $\calr \times E$ as the direct product of the two groupoids $\calr$ and $E$.
Then $\sigma$ gives rise to the isomorphism $\calr \times E \simeq \calh$ given by $(\gamma, a)\mapsto \sigma(\gamma)(s(\gamma), a)$ for $\gamma \in \calr$ and $a\in E$.
This is indeed a homomorphism, because $E$ is central in $H$ and hence for all $\delta \in \calh$ and $a\in E$, we have $(r(\delta), a)\delta =\delta (s(\delta), a)$.

We pick an ergodic free p.m.p.\ action $E\c (Y, \nu)$.
Let $\calr \times E$ act on $X\times Y$ componentwise:
$(\gamma, a)(s(\gamma), y)=(r(\gamma), ay)$ for $\gamma \in \calr$, $a\in E$ and $y\in Y$.
Via the isomorphism $\calr \times E\simeq \calh$, let $\calh$ act on $X\times Y$.
Using this action, we define an action $H\c X\times Y$ by $h(x, y)=(hx, h)(x, y)$ for $h\in H$, $x\in X$ and $y\in Y$.
Note that $E$ acts on $X\times Y$ by $a(x, y)=(x, ay)$ for $a\in E$.
Finally let $N$ act on $X\times Y$ via the homomorphism $\varphi \colon N\to H$, and set $\caln =(X\times Y)\rtimes N$ and $\cale =(X\times Y)\rtimes E$.
\end{notation}

\begin{ex}
We give an example of the homomorphism $\varphi \colon N\to H$ in Notation \ref{not-split}.
Let $G=\bs(2, 3)=\langle \, a,\, t\mid ta^2t^{-1}=a^3\, \rangle$ and $E=\langle a\rangle$.
Then $E$ is quasi-normal in $G$ and we have the associated modular homomorphism $\bm{m}\colon G\to \Q^*_+$.
%The group $N=\ker \bm{m}$ is generated by all $t^nat^{-n}$ with $n\in \Z$.
Set $N=\ker \bm{m}$.
Let $\Q^*_+\ltimes \Q$ be the semidirect product group where $\Q^*_+$ acts on $\Q$ by multiplication.
We have the homomorphism $\varphi \colon G\to \Q^*_+\ltimes \Q$ such that $\varphi(a)=1\in \Q$ and $\varphi(t)=3/2\in \Q^*_+$.
We have $\varphi(N)=\Z[1/6]<\Q$ and set $H=\varphi(N)$.
Then $\varphi$ is injective on $E$, $\varphi(E)$ is central in $H$, and $H/\varphi(E)$ is abelian and hence amenable. 
\end{ex}

\begin{lem}\label{lem-split-ne}
With Notation \ref{not-split}, suppose that $\cale$ is normal in $(\caln, \mu \times \nu)$ and denote the quotient groupoid by $(\caln /\cale, \zeta)$.
Then we have the isomorphism
\[(\caln, \mu \times \nu)\simeq (\caln/\cale \times (Y\rtimes E), \zeta \times \nu),\]
where $\caln/\cale \times (Y\rtimes E)$ is the direct product of the two groupoids $\caln/\cale$ and $Y\rtimes E$.
\end{lem}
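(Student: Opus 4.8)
The plan is to realize the asserted isomorphism through a single Borel homomorphism $\Psi=(\theta,\pi)\colon \caln\to (\caln/\cale)\times(Y\rtimes E)$, where $\theta\colon \caln\to \caln/\cale$ is the quotient homomorphism supplied by Theorem \ref{thm-quotient} and $\pi$ is a second homomorphism that records only the motion in the $Y$-coordinate. First I would write the $N$-action on $X\times Y$ in coordinates. The isomorphism $\calr\times E\simeq \calh$ of Notation \ref{not-split} shows that each $h\in H$ acts by $h(x,y)=(hx,\,a(h,x)y)$ for a Borel cocycle $a\colon H\times X\to E$ determined by $(hx,h)=\sigma(hx,x)(x,a(h,x))$ in $\calh$; composing with $\varphi$ yields a cocycle $c(n,x)\coloneqq a(\varphi(n),x)$ with
\[n(x,y)=(\varphi(n)x,\,c(n,x)y).\]
Two facts are needed: $c(a_0,x)=a_0$ for $a_0\in E$ (because $\sigma$ carries units to units and $E$ acts trivially on $X$), and the cocycle identity $c(n_1n_2,x)=c(n_1,\varphi(n_2)x)\,c(n_2,x)$.

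Next, since $E$ acts trivially on $X$ and ergodically on $Y$, the ergodic components of $\cale$ on $(X\times Y,\mu\times\nu)$ are exactly the fibers $\{x\}\times Y$; hence the unit space of $\caln/\cale$ is identified with $(X,\mu)$, the decomposition measure $\zeta$ is identified with $\mu$, and on units $\theta$ factors through the projection $(x,y)\mapsto x$. I then define
\[\pi\colon \caln\to Y\rtimes E,\qquad \pi((x,y),n)=(y,\,c(n,\varphi(n)^{-1}x)),\]
and verify, using the cocycle identity, that $\pi$ is a Borel homomorphism sending the range-$Y$-coordinate correctly and carrying the unit $(x,y)$ to the unit $y$. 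Setting $\Psi=(\theta,\pi)$ then gives a Borel homomorphism which on unit spaces is $(x,y)\mapsto(\theta(x,y),y)$ and pushes $\mu\times\nu$ forward to $\zeta\times\nu$.

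It remains to show $\Psi$ is bijective almost everywhere. For injectivity, if $\Psi(g_1)=\Psi(g_2)$ then $g_1,g_2$ share a range, and $\theta(g_1)=\theta(g_2)$ forces $g_2^{-1}g_1\in\ker\theta=\cale$, so $g_1=g_2\cdot((x',y'),a_0)$ with $a_0\in E$; comparing the $\pi$-components and invoking $c(a_0,\cdot)=a_0$ together with the centrality of $E$ (so $\varphi(a_0)$ fixes $X$) forces $a_0=e$, whence $g_1=g_2$ in $\caln$. For surjectivity, given a target $(\xi,(y,b))$ with $r(\xi)$ lying over $x$, the class-surjectivity of $\theta$ in Theorem \ref{thm-quotient} produces $g_0\in \caln$ with $r(g_0)=(x,y)$ and $\theta(g_0)=\xi$; right-multiplying $g_0$ by a suitable element of $\cale$ leaves $\theta(g_0)$ unchanged while adjusting the $\pi$-component to $(y,b)$, again because $c$ takes the value $a_0$ on $a_0\in E$. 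A bijective Borel groupoid homomorphism preserving the unit-space measures is an isomorphism of discrete measured groupoids, which gives the claim.

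The main obstacle I anticipate is the bookkeeping around the cocycle $c$: one must check that $\pi$ is genuinely multiplicative (the cocycle identity, together with $\varphi(E)$ acting trivially on $X$), and, more delicately, that $\theta$ and $\pi$ together \emph{separate points}. The crux of the latter is that the $E$-direction collapsed by $\theta$ is recorded faithfully by $\pi$ precisely because $c(a_0,x)=a_0$; this is exactly the identity that both kills the ambiguity in injectivity and realizes the required $E$-correction in surjectivity. Everything else reduces to the routine verification that the resulting bijection is bimeasurable and measure-class preserving.
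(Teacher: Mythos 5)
Your proposal is correct and follows essentially the same route as the paper: both construct the isomorphism as $F=(\theta,\pi)$ with $\pi$ recording the motion of the $Y$-coordinate, prove injectivity from $\ker\theta=\cale$ together with the fact that $\pi$ is faithful on $\cale$, and prove surjectivity from class-surjectivity of $\theta$ followed by a correction with an element of $\cale$. The only cosmetic difference is that you write $\pi$ via an explicit cocycle $c\colon N\times X\to E$ landing in the transformation groupoid $Y\rtimes E$, whereas the paper defines $\pi((x,y),g)=(p_Y(x,y),p_Y(g^{-1}(x,y)))$ and identifies $Y\rtimes E$ with the orbit equivalence relation $\calr(E\c Y)$ using freeness of the action.
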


\begin{proof}
Let $\theta \colon \caln \to \caln /\cale$ be the quotient homomorphism.
Since the action $E\c (Y, \nu)$ is ergodic, the unit space $(\caln /\cale)^0$ is identified with $X$.
We identify the groupoid $Y\rtimes E$ with the orbit equivalence relation $\calr(E\c Y)$ under the map $(y, a)\mapsto (y, a^{-1}y)$ for $y\in Y$ and $a\in E$.
We define a homomorphism $\pi \colon \caln \to Y\rtimes E$ by
\[\pi((x, y), g)=(p_Y(x, y), p_Y(g^{-1}(x, y)))\]
for $x\in X$, $y\in Y$ and $g\in N$, where $p_Y\colon X\times Y\to Y$ is the projection.

We define a homomorphism
\[F\colon \caln \to \caln /\cale \times (Y\rtimes E)\]
by $F(\gamma)=(\theta(\gamma), \pi(\gamma))$ for $\gamma \in \caln$.
We show that $F$ is an isomorphism.
To verify injectivity, pick $\gamma \in \caln$ such that $F(\gamma)$ is a unit.
Then $\theta(\gamma)$ is a unit and hence $\gamma \in \ker \theta =\cale$.
We can write $\gamma =((x, y), a)$ for some $a\in E$ and $(x, y)\in X\times Y$.
We then have $\pi(\gamma)=(y, a^{-1}y)$.
Since $\pi(\gamma)$ is a unit and the action $E\c Y$ is free, we have $a=e$ and hence $\gamma$ is a unit.

To verify surjectivity, pick $(\gamma, (y, a^{-1}y))\in \caln /\cale \times (Y\rtimes E)$ with $\gamma \in \caln /\cale$, $y\in Y$ and $a\in E$.
By class-surjectivity of $\theta$, there exists $g\in (r(\gamma), a^{-1}y)\caln$ such that $\theta(g)=\gamma$.
Define $b\in E$ by $\pi(g)=(a^{-1}y, b)$.
Note that we have $s(g)=(s(\gamma), s(\pi(g)))$ by the definition of $\pi$, and $s(\pi(g))=b^{-1}a^{-1}y$.
The product
\[((r(\gamma), y), a)g((s(\gamma), b^{-1}a^{-1}y), b^{-1})\in \caln\]
is therefore defined.
Its image under $\theta$ is $\theta(g)=\gamma$, and the image under $\pi$ is
\[(y, a)(a^{-1}y, b)(b^{-1}a^{-1}y, b^{-1})=(y, a).\]
Thus surjectivity of $F$ follows.
\end{proof}

\begin{lem}\label{lem-oe-fne}
With Notation \ref{not-split}, we again suppose that $\cale$ is normal in $(\caln, \mu \times \nu)$ and denote the quotient groupoid by $(\caln /\cale, \zeta)$.
We also suppose that $(\caln /\cale, \zeta)$ is ergodic and p.m.p.\ and admits a treeing of cost $n\in \N \cup \{ \infty \}$.
Then $N$ is orbit equivalent to $F_n\times E$, where $F_n$ is the free group of rank $n$. 
\end{lem}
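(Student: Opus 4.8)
The plan is to combine the splitting in Lemma~\ref{lem-split-ne} with the cost theory of treeings. By Lemma~\ref{lem-split-ne} we have
\[(\caln,\mu\times\nu)\simeq\big(\caln/\cale\times (Y\rtimes E),\,\zeta\times\nu\big),\]
and the factor $Y\rtimes E$ is, by construction, the orbit equivalence relation $\calr(E\c Y)$ of the ergodic free p.m.p.\ action $E\c(Y,\nu)$. Thus $\caln$ is a direct product of the treeable factor $\caln/\cale$ with a genuine free $E$-action, which already accounts for the ``$\times E$'' in the conclusion; it remains to analyze $\caln/\cale$ and to recombine.

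For $\caln/\cale$ I would invoke that it is ergodic, p.m.p., and carries a treeing of cost $n$. Gaboriau's theorem that treeings realize cost~\cite{gaboriau}, together with Hjorth's realization theorem~\cite{hjorth} that an ergodic, treeable, p.m.p.\ equivalence relation of cost $n$ is generated by a free p.m.p.\ action of $F_n$, would identify $\caln/\cale$ with $\calr(F_n\c\cdot)$ (with $F_\infty$ when $n=\infty$, which is the case produced by Theorem~\ref{thm-cost-infty}). Taking the product with $\calr(E\c Y)$ and using that a product of orbit relations of free actions is the orbit relation of the product action would give
\[\caln\simeq\calr(F_n\c\cdot)\times\calr(E\c Y)\simeq\calr\big((F_n\times E)\c\cdot\big),\]
a free action of $F_n\times E$; reading $\caln$ as the groupoid of the $N$-action would then yield the orbit equivalence of $N$ with $F_n\times E$.

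The delicate point---and the one I expect to be the main obstacle---is that orbit equivalence of the \emph{groups} is defined through free actions, whereas $N$ acts on $X\times Y$ only through $\varphi$. Hence the isotropy of $\caln$, and therefore of $\caln/\cale$, is controlled by $\ker\varphi$, and $\caln/\cale$ need not be a genuine equivalence relation. Both the application of Hjorth's theorem and the identification of $\caln$ with the orbit relation of $N$ presuppose that these groupoids are principal, i.e.\ that the $N$-action on $X\times Y$ is essentially free. I therefore expect the heart of the argument to lie in analyzing this isotropy: either verifying essential freeness from the hypotheses of Notation~\ref{not-split} (injectivity of $\varphi$ on $E$, centrality of $\varphi(E)$ in $H$, amenability of $H/E$, and the free ergodic action $E\c Y$), or, should $\ker\varphi$ be nontrivial, recognizing that its action on the treeing forces it to be free and accounting for it through the cost---so that, the cost being infinite in the case of interest, the free factor remains $F_\infty$. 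Once the isotropy is understood, the chain of groupoid isomorphisms above delivers the stated orbit equivalence.
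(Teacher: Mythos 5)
You have correctly located the obstacle, but neither of your two proposed resolutions works, and the missing idea is precisely the step the paper uses to get around it. The $N$-action on $X\times Y$ is \emph{not} essentially free: it factors through $\varphi\colon N\to H$, and in the intended application $H$ is a subgroup of $\Q^\nu$ while $N$ is an iterated amalgam of copies of $\Z^\nu$, so $\ker\varphi$ is enormous. Concretely, the isotropy group of $\caln/\cale$ at $x\in X$ is $\varphi^{-1}(E)/E\supseteq\ker\varphi$, so $\caln/\cale$ is a genuinely non-principal groupoid and Hjorth's theorem (which is about equivalence relations generated by free actions) cannot be applied to it directly. Your alternative suggestion --- that the isotropy ``acts on the treeing'' in a way that forces freeness or can be absorbed into the cost --- is not an argument; a treeing of a groupoid in the sense of Section \ref{sec-treeing} is perfectly compatible with large isotropy (the isotropy just acts on the tree $\Phi(x)$), and nothing forces it to vanish.

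The paper's resolution is to pass to the standard Bernoulli extension of the groupoid $\calm=\caln/\cale$ in the sense of Bowen--Tucker-Drob \cite{btd}: one forms the translation groupoid $\Sigma\rtimes\calm$ of the Bernoulli action $\alpha\colon\calm\c(\Sigma,\omega)$ with nontrivial base. This extension is ergodic (\cite[Lemma 3.27]{btd}) and, crucially, \emph{principal} (\cite[Proposition 3.29]{btd}), and the treeing $\Phi$ of $\calm$ lifts to a treeing of $\Sigma\rtimes\calm$ of the same cost $n$. Only now does Hjorth's realization result apply, identifying $\Sigma\rtimes\calm$ with the orbit relation of a free p.m.p.\ action of $F_n$. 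One then lets $N$ act on $\Sigma\times Y$ through the isomorphism $\caln\simeq\calm\times(Y\rtimes E)$ of Lemma \ref{lem-split-ne}; the resulting translation groupoid is $(\Sigma\rtimes\calm)\times(Y\rtimes E)$, which is principal, so this \emph{new} $N$-action is essentially free and is orbit equivalent to a free action of $F_n\times E$. Without this extension step your chain of isomorphisms cannot be completed, so the proposal as written has a genuine gap.
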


\begin{proof}
We refer the reader to \cite[Section 3]{btd} for the terminology on p.m.p.\ actions of discrete p.m.p.\ groupoids employed in this proof.
Put $\calm =\caln /\cale$.
Let $\alpha \colon (\calm, \zeta) \c (\Sigma, \omega)$ be the standard Bernoulli action of $(\calm, \zeta)$ with nontrivial base (\cite[Definition 3.1]{btd}), where $(\Sigma, \omega)$ is a fibered probability space over $(\calm^0, \zeta)$ with the projection $p\colon \Sigma \to \calm^0$.
Let $(\Sigma \rtimes \calm, \omega)$ be the translation groupoid associated to the action $\alpha$, defined as follows (\cite[Definition 3.5]{btd}):
As a set, $\Sigma \rtimes \calm$ is the fibered product $\Sigma \times_{\calm^0}\calm$ with respect to $p$ and the range map of $\calm$.
The unit space of $\Sigma \rtimes \calm$ is $\Sigma$.
The range and source maps are defined by $r(u, \gamma)=u$ and $s(u, \gamma)=\alpha(\gamma)^{-1}u$ for $(u, \gamma)\in \Sigma \rtimes \calm$, respectively.
The product and inverse are defined by $(u, \gamma)(\alpha(\gamma)^{-1}u, \delta)=(u, \gamma \delta)$ and $(u, \gamma)^{-1}=(\alpha(\gamma)^{-1}u, \gamma^{-1})$, respectively.

Let $\Phi$ be a treeing of $(\calm, \zeta)$ of cost $n$ which exists by our assumption.
Then we obtain the treeing
\[\tilde{\Phi}=\{ \, (u, \gamma)\mid u\in p^{-1}(r(\gamma)),\, \gamma \in \Phi \, \}\]
of $(\Sigma \rtimes \calm, \omega)$ of the same cost $n$.
In fact for all $u\in \Sigma$, the map $u(\Sigma \rtimes \calm)\to p(u)\calm$ defined by $(u, \gamma)\mapsto \gamma$ induces a graph isomorphism from $\tilde{\Phi}(u)$ onto $\Phi(p(u))$.

Since $(\calm, \zeta)$ is ergodic, it follows from \cite[Lemma 3.27]{btd} that $(\Sigma \rtimes \calm, \omega)$ is ergodic.
By \cite[Proposition 3.29]{btd}, $(\Sigma \rtimes \calm, \omega)$ is principal.
It thus turns out from \cite[Lemmas 4.1 and 4.2]{hjorth} that $(\Sigma \rtimes \calm, \omega)$ arises from some free p.m.p.\ action $F_n\c (\Sigma, \omega)$.

Let $\calm \times (Y\rtimes E)$ act on $\Sigma \times Y$ componentwise:
$(\gamma, (y, a))(u, a^{-1}y)=(\alpha(\gamma)u, y)$ for $\gamma \in \calm$, $u\in p^{-1}(s(\gamma))$, $y\in Y$ and $a\in E$.
Via the isomorphism $\caln \simeq \calm \times (Y\rtimes E)$ in Lemma \ref{lem-split-ne}, let $\caln$ act on $\Sigma \times Y$ and let the group $N$ act on $\Sigma \times Y$ by $gw=(gq(w), g)w$ for $g\in N$, $w\in \Sigma \times Y$, where $q\colon \Sigma \times Y\to \caln^0$ is the projection that makes $\Sigma \times Y$ into the fibered space on which $\caln$ acts.
The translation groupoid $(\Sigma \times Y )\rtimes N$ is isomorphic to
\[(\Sigma \times Y )\rtimes \caln \simeq (\Sigma \rtimes \calm) \times (Y\rtimes E).\]
The right hand side (endowed with the measure $\omega \times \nu$) is the translation groupoid arising from a free p.m.p.\ action of $F_n\times E$.
\end{proof}

%%%%%%%%%%%%%%%%%%%%%%%%%%%%%%%%%%

\subsection{An application}\label{subsec-app}

Recall the notation in Theorem \ref{thm-cost-infty}.
Let $G$ be the HNN extension
\[G=\langle \, E, \, t\mid \forall a\in E_-\ \ tat^{-1}=\tau(a)\, \rangle,\]
where $E$ is a countable group and $\tau \colon E_-\to E_+$ is an isomorphism between finite-index normal subgroups $E_-$, $E_+$ of $E$.
We set $p=[E: E_-]$ and $q=[E: E_+]$ and suppose that $p\neq 1$, $q\neq 1$ and $p\neq q$.
Let $\bm{m}\colon G\to \Q^*_+$ be the modular homomorphism associated to $E$ and set $N=\ker \bm{m}$.

We further assume that $E$ is finitely generated, free abelian and identify $E$ with $\Z^\nu$.
Then the isomorphism $\tau \colon E_-\to E_+$ is given by an element of $\mathit{GL}_\nu(\Q)$ and hence extends to an automorphism of $\Q^\nu$, which we denote by the same symbol $\tau$.
We define the subgroup $H\coloneqq \bigvee_{n\in \Z}\tau^n(E)<\Q^\nu$.

Let $\mathit{GL}_\nu(\Q)\ltimes \Q^\nu$ be the semidirect product group where $\mathit{GL}_\nu(\Q)$ acts on $\Q^\nu$ by linear transformations.
We have the homomorphism $\varphi \colon G\to \mathit{GL}_\nu(\Q)\ltimes \Q^\nu$ that embeds $E=\Z^\nu$ into $\Q^\nu$ and sends $t$ to $\tau \in \mathit{GL}_\nu(\Q)$.
Then $\varphi(t^nat^{-n})=\tau^n(a)$ for all $a\in E$ and $n\in \Z$, and in particular $H=\varphi(N)$.

Pick a p.m.p.\ action $G\c (X, \mu)$ such that
\begin{itemize}
\item there exist $E$-equivariant Borel maps $X\to E/E_-$ and $X\to E/E_+$, and
\item the restriction $N\c (X, \mu)$ is ergodic
\end{itemize}
(e.g., the action of $G$ coinduced from the diagonal action $E\c E/E_-\times E/E_+$ fulfills these properties).
We set $\caln =X\rtimes N$ and $\cale =X\rtimes E$.
It follows from Theorem \ref{thm-cost-infty} that $\cale \vartriangleleft \caln$ and the quotient groupoid of $(\caln, \mu)$ by $\cale$ is p.m.p.\ and admits a treeing of cost $\infty$.
By Lemma \ref{lem-oe-fne}, $N$ is orbit equivalent to $F_\infty \times E$.
Thus we obtain Theorem \ref{thm-main}.

%%%%%%%%%%%%%%%%%%%%%%%%%%%%%%%%%%%

%\section{Toward generalized Baumslag-Solitar groups}

%\subsection{aaa}

%\subsection{The first induction}

%\subsection{The second induction}

%%%%%%%%%%%%%%%%%%%%%%%%%%%%%%%%%%%

%%%%%%%%%%%%%%%%%%%%%%%%%%%%%%%%%%%

\end{document}